
\documentclass[11pt]{article}

\usepackage{fullpage}
\usepackage{amsmath}
\usepackage{amstext}
\usepackage{amssymb}
\usepackage{amsthm}
\usepackage{amsfonts}
\usepackage{amsxtra}
\usepackage{mathrsfs}
\usepackage{mathtools}
\usepackage{dsfont}
\usepackage{breqn}
\usepackage{enumerate}
\usepackage{bbm}
\usepackage{comment}
\usepackage[utf8]{inputenc}

\usepackage{sty}

\usepackage{tikz}
\usetikzlibrary{patterns}
\usetikzlibrary{hobby} 
\usetikzlibrary{arrows.meta} 
\tikzset{>={Latex[length=4,width=4]}} 
\usetikzlibrary{calc,intersections,decorations.markings}
\usepackage{siunitx}

\colorlet{mylightblue}{blue!20}
\colorlet{myblue}{blue!50!black}
\colorlet{mydarkblue}{blue!30!black}
\colorlet{mylightred}{red!10}
\colorlet{myred}{red!50!black}
\colorlet{mydarkred}{red!60!black}
\colorlet{mydarkgreen}{green!30!black}

\tikzset{
midarr/.style={decoration={markings,mark=at position #1 with {\arrow{stealth}}},postaction={decorate}},
midarr/.default=0.5
}

\usepackage{wrapfig}

\newtheorem{theorem}{Theorem}[section]
\newtheorem{lemma}[theorem]{Lemma}

\newtheorem{remark}[theorem]{Remark}

\newtheorem{definition}[theorem]{Definition}

\usepackage{hyperref}
\usepackage{xcolor}
\definecolor{OliveGreen}{HTML}{3C8031}
\hypersetup{
colorlinks=true,
urlcolor=blue,
linkcolor=blue,
citecolor=OliveGreen,
}

\title{Information-Theoretic Thresholds for Planted Dense Cycles}
\usepackage{authblk}
\author[1]{Cheng Mao\thanks{Email: \textit{cheng.mao@math.gatech.edu}. C.M.\ was supported in part by NSF grants DMS-2053333 and DMS-2210734.}}
\author[2]{Alexander S.\ Wein\thanks{Email: \textit{aswein@ucdavis.edu}. A.S.W.\ was supported in part by an Alfred P.\ Sloan Research Fellowship.}}
\author[1]{Shenduo Zhang\thanks{Email: \textit{szhang705@gatech.edu}. S.Z.\ was supported in part by NSF grant DMS-2053333.}}

\affil[1]{School of Mathematics, Georgia Institute of Technology}

\affil[2]{Department of Mathematics, University of California, Davis}
\date{\today}

\begin{document}

\maketitle

\begin{abstract}
We study a random graph model for small-world networks which are ubiquitous in social and biological sciences. In this model, a dense cycle of expected bandwidth $n \tau$, representing the hidden one-dimensional geometry of vertices, is planted in an ambient random graph on $n$ vertices. For both detection and recovery of the planted dense cycle, we characterize the information-theoretic thresholds in terms of $n$, $\tau$, and an edge-wise signal-to-noise ratio $\lambda$. In particular, the information-theoretic thresholds differ from the computational thresholds established in a recent work for low-degree polynomial algorithms, thereby justifying the existence of statistical-to-computational gaps for this problem.
\end{abstract}

\newpage

\tableofcontents

\newpage

\section{Introduction}

The \emph{Watts--Strogatz small-world model} has been an influential random graph model since its proposal in 1998 due to the ubiquity of the small-world phenomenon in complex networks \cite{watts1998collective,watts2004small}.
In this model, there are $n$ vertices with latent positions on a circle, and the vertices are more likely to be connected to their $k$-nearest geometric neighbors than to more distant vertices.
In other words, a denser cycle of length $n$ and width $k$ is ``planted" in the sparser ambient random graph on $n$ vertices. 
Informally, the small-world model can also be viewed as an interpolation between a \emph{random geometric graph} \cite{penrose2003random}, where edges exist only between vertices with nearby locations on a circle, and an \emph{\ER\ graph} \cite{erdos1959random}, where edges are random and independent.
As a consequence, a small-world network tends to have a high clustering coefficient due to the geometry while preserving low distances between vertices in a random graph.

While there has been extensive literature on small-world networks and geometric graphs, the associated statistical problems, such as detection and recovery of the latent geometry from the observed random graph, have only gained attention more recently.
The information-theoretic thresholds and efficient algorithms for the small-world model are studied in \cite{cai2017detection}, but there remain several gaps between upper and lower bounds that are unknown to be inherent or not.
Much sharper characterizations of the recovery thresholds are given in \cite{bagaria2020hidden,ding2020consistent} but only for a small bandwidth parameter $k = n^{o(1)}$. 
From the perspective of graphon estimation, there have also been algorithms and statistical analyses introduced for related models recently \cite{janssen2022reconstruction,natik2021consistency,giraud2021localization}. 
Moreover, the study of random geometric graphs in higher dimensions has received broad interests in recent years \cite{bubeck2016testing,araya2019latent,brennan2020phase,eldan2022community,liu2022testing,li2023spectral}; see the survey \cite{duchemin2022random} and, in particular, the recent works \cite{liu2021phase,liu2021probabilistic} for high-dimensional random geometric graphs with edge noise similar to what we consider.

Since the model of interest consists of a hidden dense cycle planted in a sparser random graph, the recent work \cite{maoDetectionRecoveryGapPlanted2023} refers to it as the \emph{planted dense cycle} problem, following the etymology of planted clique and planted dense subgraph problems \cite{jerrum1992large,kuvcera1995expected,ames2015guaranteed,hajek2015computational}.
The work \cite{maoDetectionRecoveryGapPlanted2023} studies the problem in the framework of \emph{low-degree polynomial algorithms}, a framework that has proved to be successful at predicting \emph{computational} thresholds, i.e., understanding when computationally efficient (polynomial time) algorithms can solve a statistical task~\cite{hopkins2017efficient,sos-hidden,sam-thesis,kunisky2019notes,schramm2020computational}.
However, the more fundamental \emph{information-theoretic} (or \emph{statistical}) thresholds---where no constraints are placed on computation time---remain open, and we aim to address them in this work. 
More specifically, suppose that in an ambient random graph on $n$ vertices with edge density $q$, there is a hidden cycle with expected width $k = n \tau$ and edge density $p$. 
We find the information-theoretic thresholds for detecting the presence of the cycle and for recovering the location of the cycle, in terms of the parameters $n,\tau,p,q$. 
In particular, the information-theoretic thresholds for detection and recovery both differ from the computational thresholds given in \cite{maoDetectionRecoveryGapPlanted2023}, justifying the existence of \emph{statistical-to-computational gaps} for this problem.





\subsection{Problem setup}
The planted dense cycle model can be described as follows.
For any $a,b \in [0,1]$, define
$$
\dist(a,b) := \min\{|a-b|, 1-|a-b|\} .
$$
In other words, $\dist(a,b)$ is the distance between $a$ and $b$ on a circle of circumference $1$.
Throughout the paper, we consider the setting where the number of vertices $n$ grows, and other parameters $p, q, r, \tau \in [0,1]$ may depend on $n$.
In the models to be defined, $p$ and $q$ will be the average edge densities on and off the planted cycle respectively, $r$ is the average edge density of the entire graph, and $n \tau$ is the bandwidth of the cycle, satisfying
\begin{equation}
n \to \infty, \qquad 
0 < q < r < p \le 1, \qquad
0 < \tau < 1/2, \qquad
r = \tau p + (1-\tau) q .
\label{eq:parameters-all}
\end{equation}
With these parameters, the models are formally defined as follows.
Let $[n] := \{1, \dots, n\}$ and $\binom{[n]}{2} := \{(i,j) : i, j \in [n], \, i < j\}$.

\begin{definition}[Model $\cP$, planted dense cycle] \label{def:mod-p}
Let $z \in [0,1]^n$ be a latent random vector whose entries $z_1, \dots, z_n$ are i.i.d.\ $\Unif([0,1])$ variables.
Let 
\begin{equation*}
X_{ij} := \1_{\dist(z_i,z_j) \le \tau/2} \quad \text{for all } (i,j) \in \binom{[n]}{2} .
\end{equation*}
For concreteness, we let $X_{ii} = 0$ and $X_{ji} = X_{ij}$ for $i \ne j$, so that $X \in \R^{n \times n}$ is the adjacency matrix of the underlying cycle.
We observe an undirected graph with adjacency matrix $A \in \R^{n \times n}$ whose edges, conditional on $z_1, \dots, z_n$, are independently sampled as follows:
$A_{ij} \sim \Bern(p)$ if $X_{ij} = 1$ and $A_{ij} \sim \Bern(q)$ if $X_{ij} = 0$ for $(i,j) \in \binom{[n]}{2}$.
We write $A \sim \cP_A$ and $(A,X) \sim \cP$ (or, equivalently, $(A,z) \sim \cP$).
\end{definition}


\begin{definition}[Model $\cQ$, \ER graph] \label{def:mod-q}
We observe a $G(n,r)$ \ER graph with adjacency matrix $A \in \R^{n \times n}$.
We write $A \sim \cQ$.
\end{definition}

We now formulate the detection and recovery problems of interest.

\begin{definition}[Detection]
\label{def:prob-detect}
Let $\cP$ and $\cQ$ be the models from Definitions~\ref{def:mod-p} and~\ref{def:mod-q} respectively, with parameters in \eqref{eq:parameters-all}.
Observing the adjacency matrix $A \in \R^{n \times n}$ of a graph, we test $H_1 : A \sim \cP_A$ against $H_0 : A \sim \cQ$.
We say that a test $\Phi$, a $\{0,1\}$-valued measurable function of the observation $A$, achieves 
\begin{itemize}
\item strong detection, 
if
$\lim_{n\to \infty} [\cP\br{\Phi(A) = 0} + \cQ\br{\Phi(A) = 1}] = 0;$
\item weak detection, if 
$\limsup_{n\to \infty} [\cP\br{\Phi(A) = 0} + \cQ\br{\Phi(A) = 1}] < 1.$
\end{itemize}
\end{definition}

\begin{definition}[Recovery]
\label{def:prob-recover}
Let $\cP$ be the model
from Definition~\ref{def:mod-p} with parameters in \eqref{eq:parameters-all}.
For $(A,X) \sim \cP$, observing $A$, we aim to estimate $X$ with an estimator $\hat X \in \R^{n \times n}$ that is measurable with respect to $A$.
Consider the mean squared error $R(\hat X, X) := \sum_{1 \le i < j \le n} \E[(\hat X_{ij} - X_{ij})^2]$, where the expectation is with respect to $(A,X) \sim \cP$.
We say that an estimator $\hat X$ achieves
\begin{itemize}
\item strong recovery, if $\lim_{n \to \infty} \frac{R(\hat X, X)}{\binom{n}{2} \tau (1-\tau)} = 0$;
\item weak recovery, if $\limsup_{n \to \infty} \frac{R(\hat X, X)}{\binom{n}{2} \tau (1-\tau)} < 1$.
\end{itemize}
\end{definition}

\noindent
Note that each $X_{ij}$ is marginally a $\Bern(\tau)$ random variable, so estimating $X_{ij}$ by its mean $\tau$ for all $(i,j) \in \binom{[n]}{2}$ yields a trivial mean squared error $\binom{n}{2} \tau (1-\tau)$, which justifies the above definition.

\subsection{Main results}
Our main results are summarized in the following theorem.

\begin{theorem}[Information-theoretic thresholds]\label{thm:main result}
Consider the detection and recovery problems in Definitions~\ref{def:prob-detect} and~\ref{def:prob-recover} respectively, with parameters $n, \tau, p, q, r$ in \eqref{eq:parameters-all}.
Furthermore, suppose that $(\log n)^3 \le n \tau \le \frac{n}{(\log n)^2}$ and $\frac{\log n}{n} \le r \le \frac 12$. 
Define $\lambda:=\frac{(p-q)^2}{r(1-r)}$ which can be seen as the signal-to-noise ratio of the problem.
Then we have:
\begin{itemize}
\item 
If $n \tau \lambda \to 0$ as $n \to \infty$, then no test achieves weak detection, and no estimator achieves weak recovery.

\item
If $\frac{n \tau \lambda}{\log n} \to \infty$ and $\frac{n \tau (p-r)}{\log n} \to \infty$ as $n \to \infty$, then there is a test that achieves strong recovery, and there is an estimator that achieves strong recovery.
\end{itemize}
\end{theorem}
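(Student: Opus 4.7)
\medskip

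\noindent\textbf{Proof proposal.} I would split the theorem into its two halves and use different machinery for each: a second-moment / $\chi^2$ calculation for the impossibility direction and a maximum-likelihood analysis for the achievability direction.

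For the impossibility direction ($n\tau\lambda \to 0$), write $L(A) := (d\cP_A/d\cQ)(A)$ and use the mixture form $L = \E_z[d\cP(A\mid z)/d\cQ]$. A direct computation using the identity $\E_{\cQ}[L_x L_y] = 1 + \lambda(x-\tau)(y-\tau)$ for $x, y \in \{0,1\}$ (where $L_x$ is the per-edge likelihood ratio given $X_{ij} = x$, valid because $r = \tau p + (1-\tau)q$) gives
\begin{equation*}
\chi^2(\cP_A \,\|\, \cQ) + 1 \;=\; \E_{z,z'} \prod_{i<j}\bigl(1 + \lambda(X_{ij}(z)-\tau)(X_{ij}(z')-\tau)\bigr) \;\le\; \E_{z,z'}\exp\bigl(\lambda\, T(z,z')\bigr),
\end{equation*}
where $z,z'$ are independent copies of the latent vector and $T(z,z') := \sum_{i<j}(X_{ij}(z)-\tau)(X_{ij}(z')-\tau)$ is a mean-zero completely degenerate $U$-statistic of order $2$ with variance $\binom{n}{2}(\tau(1-\tau))^2 = O(n^2\tau^2)$. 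Controlling $\E\exp(\lambda T)$ by truncation together with degenerate $U$-statistic moment bounds should yield $\chi^2 = O((n\tau\lambda)^2) = o(1)$, which rules out weak detection by Le Cam. For weak recovery, a per-edge Bayesian argument reduces to essentially the same quantity and shows $\Var(\E[X_{ij}\mid A]) = o(\tau(1-\tau))$, so even the MMSE estimator cannot improve on the trivial estimator $\hat X_{ij} \equiv \tau$.

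For the achievability direction, I would analyze the maximum-likelihood estimator
\begin{equation*}
\hat z \in \operatorname*{argmax}_{z \in [0,1]^n}\, \sum_{i<j} X_{ij}(z)\left[A_{ij}\log\tfrac{p(1-q)}{q(1-p)} + \log\tfrac{1-p}{1-q}\right],
\end{equation*}
setting $\hat X := X(\hat z)$, and base a test for detection on thresholding the maximum of the log-likelihood. It suffices to show that with probability $1-o(1)$ under $\cP$, every $z$ whose induced $X(z)$ differs substantially from the planted $X$ has strictly smaller objective than the truth. I would bucket competitor configurations by the symmetric-difference size between $X(z)$ and the planted $X$, apply a Bernstein-type bound to the log-likelihood difference inside each bucket, and union-bound over the at most $\exp(O(n\log n))$ equivalence classes of $z$. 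The union bound costs a $\log n$ factor in the signal, and the two resulting conditions align exactly with the theorem's hypotheses: $n\tau\lambda/\log n \to \infty$ controls the variance term in Bernstein, while $n\tau(p-r)/\log n \to \infty$ controls the Bennett/Bernstein correction that becomes dominant when $r$ is small.

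The main obstacle will be on the impossibility side, namely controlling $\E\exp(\lambda T)$ uniformly in the allowed parameter regime: the kernel is bounded but highly non-Gaussian, and the easy estimate $\E[T^2]\lambda^2$ captures only the quadratic term of the MGF. Expanding the product in the $\chi^2$ identity, the surviving terms correspond to subgraphs of $[n]$ in which every vertex has degree at least $2$ (since any pendant edge kills the contribution by degeneracy of the kernel), and one must show that the weighted sum over these ``cores'' is $o(1)$ under only the hypothesis $n\tau\lambda \to 0$. On the achievability side, the delicate point is that a naive union bound over $z \in [0,1]^n$ is ruinous; passing to equivalence classes that induce the same $X(z)$ brings the effective alphabet down to $\exp(O(n\log n))$, and one must then calibrate the Bernstein constants so that the two separate conditions in the second bullet of the theorem arise precisely from the variance and Bennett contributions.
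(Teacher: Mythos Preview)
Your achievability argument matches the paper's: maximize $\langle X',A\rangle$ over the set of realizable cycle structures (cardinality $\le n^{3n}$), apply Bernstein per competitor, and union-bound; the two conditions in the second bullet arise exactly as you say from the variance and boundedness terms.

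The impossibility side has a real gap. For detection, the unconditional second moment identity you write down is correct, but the paper explains (Remark~\ref{rmk:technical}) why ``truncation together with degenerate $U$-statistic moment bounds'' yields only the suboptimal condition $n\sqrt{\tau}\,\lambda \to 0$ rather than $n\tau\lambda \to 0$: the best available tail for the decoupled $U$-statistic is of the form $\exp(-c\,t^{1/2})$, and the resulting MGF bound loses a factor $\tau^{1/2}$. Moreover, in the conditional second moment method the truncation event must be measurable in a \emph{single} copy of $z$, whereas the bad realizations of $T(z,z')$ come from the interaction of both copies; this obstruction is exactly the first bullet of Remark~\ref{rmk:technical}. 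The paper's fix has two ingredients your sketch is missing: (i) condition on an event $\mathcal{E}_1$ that controls only the block occupancies of $z'$, and (ii) after fixing $z'\in\mathcal{E}_1$, split $\sum_{i<j} X'_{ij}(X_{ij}-\tau)$ into $O(1/\tau)$ \emph{independent} sub-sums indexed by arcs of length $\tau$ on the circle (Lemma~\ref{lem:independence of U}), so that the MGF factors and each factor can be handled by moment bounds of size $n\tau$ rather than $n$. Your $2$-core expansion is an interesting alternative, but to make it reach the sharp threshold you would need to show $\bigl(\E_z\prod_{e\in H}(X_e-\tau)\bigr)^2 \lesssim \tau^{|E(H)|}$ for every $2$-core $H$, which is a nontrivial statement about overlapping arc indicators that you have not argued.

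For recovery, ``a per-edge Bayesian argument reduces to essentially the same quantity'' is too quick: there is no black-box implication from detection impossibility to MMSE near the trivial value. The paper first upgrades the detection bound from $\TV\to 0$ to $\KL(\cP_A\|\cQ)\to 0$ (Lemma~\ref{lem:kl-go-to-zero}, which re-uses the conditional second moment computation), and then runs an I-MMSE/area-theorem argument: interpolate $\cP_\theta$ from $\cQ$ to $\cP$, express $\KL(\cP_A\|\cQ)$ as an integral in $\theta$ of a quantity that dominates $\Var_\theta(\E[X_e\mid A])$ (Lemmas~\ref{lem:kl-integral}--\ref{lem:mutual info upper bound 2}), and use monotonicity of the MMSE in $\theta$ (Lemma~\ref{lem:mmse-monotone}) to extract the bound at $\theta=p$. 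This interpolation is doing genuine work and is not captured by your sketch.
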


\noindent
The four results, upper and lower bounds for detection and recovery, are established in Theorems~\ref{thm:detection lower bound}, \ref{thm:recovery lower bound}, \ref{thm:upper bound on detection}, and~\ref{thm:recovery-upper-bound-thm} respectively.
For simplicity, we have focused on a regime that is common to all the four theorems:
(1) the condition $(\log n)^3 \le n \tau \le \frac{n}{(\log n)^2}$ allows the average bandwidth $n \tau$ of the planted dense cycle to vary in a large range, but excludes extreme cases;
(2) the condition $\frac{\log n}{n} \le r \le \frac 12$ on the average edge density $r$ allows the random graph to be sparse (above the connectivity threshold) or dense.

Our conditions for positive and negative results match up to a logarithmic factor in most cases. 
The threshold for both detection and recovery is at $n \tau \lambda = \tilde \Theta(1)$, where, again, $n \tau$ is the average bandwidth of the planted dense cycle, and $\lambda$ is the edge-wise signal-to-noise ratio.

There is only one small regime where the gap between our positive and negative conditions is larger than a logarithmic factor. 
Namely, a gap appears when $n \tau \lambda = n \tau (p-r) \frac{p-r}{r (1-r) (1-\tau)^2} \gg 1$ and $n \tau (p-r) \ll 1$, in which case we have $p \gg r$ and $n \tau p \ll 1$. In this pathological regime, the expected number of edges on the planted dense ``cycle" is of order $n^2 \tau p \ll n$, smaller than the necessary number of edges to form an actual (even width-$1$) cycle through all the $n$ vertices.
However, the nonexistence of a cycle does not preclude the possibility of consistent detection nor recovery.
We choose not to pursue this corner case.

\begin{proof}
[Proof of Theorem~\ref{thm:main result}]
It suffices to verify the assumptions of Theorems~\ref{thm:detection lower bound}, \ref{thm:recovery lower bound}, \ref{thm:upper bound on detection}, and~\ref{thm:recovery-upper-bound-thm}.  
The assumptions of Theorem~\ref{thm:detection lower bound} are obvious.
For Theorem~\ref{thm:recovery lower bound}, the only less obvious condition is $n \lambda \tau^{3/2} \log n \to 0$, which follows from $n \tau \lambda \to 0$ together with $\tau \le \frac{1}{(\log n)^2}$.
For Theorem~\ref{thm:upper bound on detection}, note that $p-r = (1-\tau)(p-q) \approx p-q$ since $r = \tau p + (1-\tau) q$ and $\tau \to 0$. Thus the two conditions in Theorem~\ref{thm:upper bound on detection} are weaker than $\frac{n \tau \lambda}{\log n} \to \infty$ and $\frac{n \tau (p-r)}{\log n} \to \infty$.
Finally, for Theorem~\ref{thm:recovery-upper-bound-thm}, 
we already have the second condition $\frac{n \tau (p-q)}{\log n} \to \infty$. 
For the first condition $\frac{n \tau (p-q)^2}{p \log n} \to \infty$ to fail, we must $p/r \to \infty$ because $\frac{n \tau \lambda}{\log n} \approx \frac{n \tau (p-q)^2}{r \log n} \to \infty$. However, we then obtain $\frac{n \tau (p-q)^2}{p \log n} \approx \frac{n \tau (p-q)}{\log n} \to \infty$.
\end{proof}

Our information-theoretic results for the detection and recovery of a planted dense cycle complement the work \cite{maoDetectionRecoveryGapPlanted2023} which studies computationally efficient algorithms for the same model.
To be more precise, the work \cite{maoDetectionRecoveryGapPlanted2023} focuses on a class of algorithms based on low-degree polynomials in the entries of the adjacency matrix $A$, and the results for these low-degree algorithms can be summarized as follows.


\begin{remark}[Informal summary of computational thresholds \cite{maoDetectionRecoveryGapPlanted2023}]
\label{rmk:computation-results}
Consider the detection and recovery problems in Definitions~\ref{def:prob-detect} and~\ref{def:prob-recover} respectively,
with parameters in \eqref{eq:parameters-all} 
satisfying further $\constpq q \leq p \leq \constpq' q$ for some constants $\constpq' > \constpq > 1$. 
Then the detection threshold for the class of low-degree polynomial algorithms is at $n^3 p^3 \tau^4 = n^{o(1)}$, and the recovery threshold for the class of low-degree polynomial algorithms is at $n p \tau^2 = n^{o(1)}$.
%
\end{remark}

In the regime where $\constpq q \leq p \leq \constpq' q$, our signal-to-noise ratio $\lambda = \frac{(p-q)^2}{r(1-r)}$ has the same order as $p$, so the information-theoretic threshold from Theorem~\ref{thm:main result} can be expressed as $n p \tau = \tilde \Theta(1)$.
Therefore, Theorem~\ref{thm:main result} and Remark~\ref{rmk:computation-results} together suggest that there are statistical-to-computational gaps for both detection and recovery of a planted dense cycle.
To further illustrate the gaps, we draw a phase diagram of the thresholds in Figure~\ref{fig:phase}, which has the additional information-theoretic threshold compared to Figure~1 of \cite{maoDetectionRecoveryGapPlanted2023}. 
Let $p = n^{-a}$ and $\tau = n^{-b}$ for constants $a, b \in (0,1)$.
The information-theoretic threshold for both detection and recovery is given by $1-a-b = 0$ (black solid line). The computational threshold for detection is given by $3 - 3a - 4b = 0$ (red dotted line), while the computational threshold for recovery is given by $1- a - 2b = 0$ (blue dashed line). 
In particular, in region $B$ of the figure, detection is information-theoretically possible, but computationally hard (for low-degree algorithms); in region $B \cup C$, recovery is information-theoretically possible but computationally hard. 

\begin{figure}[ht]
\centering

\begin{tikzpicture}[scale=0.35]
\def\xtick#1#2{\draw[thick] (#1)++(0,.2) --++ (0,-.4) node[below=-.5pt,scale=0.7] {#2};}
\def\ytick#1#2{\draw[thick] (#1)++(.2,0) --++ (-.4,0) node[left=-.5pt,scale=0.7] {#2};}

\coordinate (O) at (0,0);
\coordinate (NE) at (10,10);
\coordinate (NW) at (0,10);
\coordinate (SE) at (10,0);
\coordinate (W1) at (0,5);
\coordinate (W2) at (0,7.5);
\coordinate (S1) at (5,0);

\def\DE{(W2) to (SE)}
\def\RE{(W1) to (SE)}
\path[name path=DE] \DE;
\path[name path=RE] \RE;
\path[name path=AB] (NW) -- (SE);



\node at (6,7) {A};
\node at (2,5) {C};
\node at (3,2) {D};
\node at (2,7) {B};

\draw[red,thick,densely dotted] \DE;
\draw[blue,thick,dashed] \RE;
\draw[black,thick] (NW) -- (SE);




\draw[semithick] (O) rectangle (NE);
\node[left=25pt] at (W1) {$b$};
\node[below=10pt] at (S1) {$a$};
\xtick{O}{0}
\xtick{SE}{1}
\ytick{O}{0}
\ytick{NW}{1}
\ytick{W1}{0.5}
\ytick{W2}{0.75}
\end{tikzpicture}

\caption{The statistical-to-computational gaps for detection and recovery of a planted dense cycle with $p = n^{-a}$, $\tau = n^{-b}$, and $\constpq q \leq p \leq \constpq' q$. Both tasks become more difficult as $a$ and $b$ increase. Black solid line: information-theoretic threshold for both detection and recovery. Red dotted line: computational (low-degree) threshold for detection. Blue dashed line: computational threshold for recovery.}
\label{fig:phase}
\end{figure}

\subsection{Technical contributions}
The positive results, i.e., upper bounds, for detection and recovery are achieved by maximizing $\langle X, A \rangle$ over the set of realizable cycles $X$ in model $\cP$ from Definition~\ref{def:mod-p}.
It is fairly straightforward to analyze such an exhaustive search; see Section~\ref{sec:upper-bounds}.
Our main technical contributions lie in the proofs of negative results, especially the detection lower bound, which we now explain.

Proving a lower bound for detection is equivalent to bounding the total variation distance between the models $\cP$ and $\cQ$. 
Since $\cP$ is a Bayesian model with a prior on $z$ (or, equivalently, on $X$), the natural approach is the second moment method which controls the total variation distance by bounding the $\chi^2$-divergence between $\cP$ and $\cQ$. 
However, as we will see in Section~\ref{sec:lower-bound-detect}, this is not sufficient for obtaining the desired detection lower bound, because the $\chi^2$-divergence blows up due to a certain rare event.
Instead, we need to condition on the complement of such a rare event, resulting in a \emph{conditional second moment method}. See, e.g., \cite{wuTestingCorrelationUnlabeled2021,liu2022testing} for other applications of the conditional second moment method to proving lower bounds for statistical testing problems on random graphs.

Furthermore, the analysis of the conditional second moment is highly nontrivial. 
Recall that the underlying cycle $X$ is defined by $X_{ij} = \bbone_{\dist(z_i,z_j) \le \tau/2}$ which involves 
two random variables $z_i$ and $z_j$ at once. 
This leads to intricate dependencies between entries of $X$ which in turn pose challenges to proving concentration inequalities for functions of $X$.
With typical methods such as the bounded difference inequality and the martingale method (see, e.g., \cite{warnkeMethodTypicalBounded2016}) failing to yield the desired bound, we resort to decoupling and concentration inequalities for \emph{$U$-statistics} \cite{de1995decoupling,delapenaDecoupling1999,gineExponentialMomentInequalities2000,houdreExponentialInequalitiesConstants2003,10.1214/009117906000000476}. 
However, the off-the-shelf results fall short of giving the optimal condition; see Remark~\ref{rmk:technical}.
To overcome this difficulty, we carefully split the entries of $X$ into independent groups (cf.\ Lemma~\ref{lem:independence of U}) and then utilize the independence across groups. 
This strategy is inspired by the analysis of graph-dependent random variables in the seminal work \cite{janson2004large}. 
See Section~\ref{sec:concentration-u-statistics-three-steps} for the proof and further discussion.

Finally, the proof of the negative result for the recovery problem follows an existing approach which relates the \emph{minimum mean squared error} (MMSE) to the mutual information between the observed adjacency matrix $A$ and the hidden cycle $X$. The connection of the MMSE to derivatives of entropy-related quantities has long been used in the statistics literature (cf.\ Chapter~4 of the textbook \cite{lehmann2006theory}). 
The precise relation between the MMSE and mutual information is given by the I-MMSE formula in the Gaussian setting \cite{guo2005mutual} and also by an area theorem in a more general setting \cite{measson2004life,measson2009generalized}. This approach finds a variety of applications such as proving negative results for the stochastic block model \cite{deshpandeAsymptoticMutualInformation2016} and graph matching \cite{wuSettlingSharpReconstruction2022}. 
We use the same strategy but the adaptation to our model is nontrivial. See Section~\ref{sec:lower-bound-recovery} for a complete proof.

\section{Lower bound for detection}
\label{sec:lower-bound-detect}

The goal of this section is to prove the lower bound for detection as stated in Theorem~\ref{thm:detection lower bound} below. Let $\TV(\cP,\cQ)$ denote the total variation distance between probability distributions $\cP$ and $\cQ$.
By the Neyman--Pearson lemma, for any fixed $n$, we have
$$
\inf_{\Phi} [\cP\br{\Phi(A) = 0} + \cQ\br{\Phi(A) = 1}] = 1 - \TV(\cP, \cQ) ,
$$
where the infimum is over all tests $\Phi$ measurable with respect to the observation $A \in \R^{n \times n}$.
As a result, if there is a test $\Phi$ achieving weak detection in the sense of Definition~\ref{def:prob-detect}, then $\TV(\cP, \cQ)$ must be bounded away from $0$ as $n \to \infty$.
Conversely, if we could show that $\TV(\cP, \cQ) \to 0$ as $n \to \infty$, the impossibility of weak detection would follow, thereby proving the detection lower bound in Theorem~\ref{thm:main result}.

\begin{theorem}
\label{thm:detection lower bound}
Consider the models $\cP$ and $\cQ$ from Definitions~\ref{def:mod-p} and~\ref{def:mod-q} respectively with parameters $n, \tau, p, q, r$. 
Define $\lambda:=\frac{(p-q)^2}{r(1-r)}.$
Suppose that $\tau \to 0$, $n \tau \ge (\log n)^3$, and 
\begin{equation}\label{eq:cond_detection_lower_bound}
n\tau \lambda \to 0
\end{equation}
as $n \to \infty$. Then we have $\TV\pr{\cP_A,\cQ} \to 0$.
\end{theorem}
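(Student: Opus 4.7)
The plan is to bound $\TV(\cP_A,\cQ)$ via a \emph{conditional} second moment argument. A vanilla $\chi^2$ bound blows up because of atypical latent configurations (e.g.\ an unusual surplus of short arcs among the $z_i$'s), so I choose a good event $E$ on the latent vector $z\in[0,1]^n$ with $\cP(E^c)=o(1)$ designed to kill precisely those configurations; candidates are bounds on $\max_i |\{j:\dist(z_i,z_j)\le \tau/2\}|$ and similar degree-type statistics of the underlying cycle, which concentrate around their means of order $n\tau\gtrsim (\log n)^3$ by a standard binomial tail bound. Letting $\cP_A^{(E)}$ denote the planted model with $z$ restricted to $E$, the triangle inequality together with Cauchy--Schwarz yields
\begin{equation*}
\TV(\cP_A,\cQ) \;\le\; \cP(E^c)+\tfrac{1}{2}\sqrt{\chi^2(\cP_A^{(E)},\cQ)},
\end{equation*}
so it suffices to prove that the conditional $\chi^2$ tends to $0$.

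For two independent copies $z,z'$ of the latent vector, a direct computation using $\cQ=\Bern(r)$ and the identity $r=\tau p+(1-\tau)q$ gives the standard factorized form
\begin{equation*}
\cP(E)^2\bigl(1+\chi^2(\cP_A^{(E)},\cQ)\bigr) \;=\; \E_{z,z'}\!\left[\mathbf 1_{E(z)}\mathbf 1_{E(z')}\prod_{i<j}\bigl(1+\lambda(X_{ij}(z)-\tau)(X_{ij}(z')-\tau)\bigr)\right],
\end{equation*}
where $\lambda = (p-q)^2/[r(1-r)]$ as in the statement. Applying $1+x\le e^x$ in each factor reduces the problem to showing
\begin{equation*}
\E_{z,z'}\bigl[\mathbf 1_{E(z)}\mathbf 1_{E(z')}\exp(\lambda S(z,z'))\bigr] \;=\; 1+o(1), \qquad S(z,z'):=\sum_{i<j}(X_{ij}(z)-\tau)(X_{ij}(z')-\tau).
\end{equation*}
A first-moment calculation gives $\E S=0$, and since the events $\{\dist(z_i,z_j)\le\tau/2\}$ and $\{\dist(z_i,z_k)\le\tau/2\}$ are actually independent conditional on $z_i$, we even get $\Var S = \binom{n}{2}\tau^2(1-\tau)^2$, so the typical magnitude is $\lambda|S|\lesssim \lambda n\tau \to 0$ under the hypothesis \eqref{eq:cond_detection_lower_bound}. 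If $S$ enjoys sub-Gaussian-type concentration at this scale, the claimed bound on the exponential moment is immediate.

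The main obstacle is this exponential concentration of $S$, and it is where the proof must be tailored to the problem. The random variable $S$ is a fourth-degree polynomial in the $2n$ i.i.d.\ uniforms $z_1,\dots,z_n,z_1',\dots,z_n'$ whose summands share variables whenever their index pairs share a vertex; bounded-differences and martingale methods overcount these dependencies and yield the wrong threshold by polylogarithmic factors. My plan is to import the graph-dependent splitting idea of \cite{janson2004large}, which I expect to be encapsulated in the paper's Lemma~\ref{lem:independence of U}: partition $\binom{[n]}{2}$ into a bounded number of classes whose index pairs form a matching on $[n]$, so that within each class the summands $(X_{ij}(z)-\tau)(X_{ij}(z')-\tau)$ are jointly independent across $(i,j)$ (being functions of disjoint pairs of $z_i$'s and $z_i'$'s). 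Within each class I would then apply a Bernstein or Bennett inequality for sums of independent bounded variables, using the good event $E$ to cap the data-dependent variance proxy that appears in the Bernstein bound. Combining the classes by H\"older's inequality at the cost of an absolute constant, then averaging, yields $\E[\mathbf 1_E\mathbf 1_{E'}e^{\lambda S}] = 1+o(1)$ under $n\tau\lambda\to 0$, which combined with $\cP(E^c)=o(1)$ closes the argument.
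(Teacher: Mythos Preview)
Your high-level strategy is right and matches the paper: conditional second moment, the exact factorization
\[
\E_\cQ\!\left[\frac{\cP(A\mid z)\cP(A\mid z')}{\cQ(A)^2}\right]=\prod_{i<j}\bigl(1+\lambda(X_{ij}-\tau)(X'_{ij}-\tau)\bigr),
\]
and the recognition that standard bounded-differences or off-the-shelf $U$-statistic tails are too weak. But the implementation you sketch has a genuine gap.

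You propose to partition $\binom{[n]}{2}$ into a \emph{bounded} number of matchings and then combine via H\"older ``at the cost of an absolute constant.'' This is impossible: any edge-coloring of $K_n$ needs $n-1$ classes, not $O(1)$. With $\sim n$ classes, H\"older forces the MGF parameter up to $\sim n\lambda$, and Bennett on a matching of $\sim n/2$ independent terms with variance $\sim\tau^2$ each yields an exponent of order $n\tau^2(e^{n\lambda}-1-n\lambda)$, which does \emph{not} vanish under $n\tau\lambda\to0$ alone. So the matching decomposition does not recover the threshold.

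The paper's splitting is structurally different and is where the real work lies. One first \emph{conditions on $z'$} and observes that only the $O(n^2\tau)$ terms with $X'_{ij}=1$ survive; these lie in a band graph on the circle. Partitioning $[0,1)$ into $k\asymp 1/\tau$ arcs of length $\tau$, the surviving pairs fall into three classes (same arc; odd--even adjacent arcs; even--odd adjacent arcs), and within each class the contributions from different arcs depend on disjoint sets of $z_i$'s, hence are independent. Thus each $S_\iota$ is a sum of $\sim 1/\tau$ \emph{independent} pieces $U_\ell^{(\iota)}$, each of which is itself a second-order $U$-statistic on $\sim n\tau$ points. The good event is not a degree bound but rather (i) control of $\sum_{i<j}(X_{ij}-\tau)$ and (ii) a uniform bound $|I_\ell|\le 4n\tau$ on the arc occupancies. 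Finally, Bernstein on $U_\ell^{(\iota)}$ is not enough either; the paper bounds all moments of $U_\ell^{(\iota)}$ via decoupling plus the Gin\'e--Lata\l a--Zinn inequality and then sums the resulting MGF over $\ell$ using independence. This two-level structure (independent arcs outside, $U$-statistic inside) is what your proposal is missing.
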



\subsection{Conditional second moment method}

Let $\chi^2(\cP,\cQ)$ denote the $\chi^2$-divergence between $\cP$ and $\cQ$. It is well-known \cite{tsybakovIntroductionNonparametricEstimation2009} that
\begin{equation*}
\TV(\cP,\cQ) \leq \chi^2(\cP,\cQ) .
\end{equation*}
To prove Theorem~\ref{thm:detection lower bound}, it is tempting to show that $\chi^2(\cP,\cQ) \to 0$.
However, this only holds under an assumption stronger than desired (cf.\ Remark~\ref{rmk:technical}).
The $\chi^2$-divergence between $\cP$ and $\cQ$ can be large due to some rare event, and one strategy to bypass this issue is to condition on the complement of such a rare event, known as the conditional second moment method. We now formulate this strategy as a lemma, following the presentation in \cite{wuTestingCorrelationUnlabeled2021}. 

Recall that $\cP$ defines a joint distribution of $(A,z)$ in Definition~\ref{def:mod-p}. 
For simplicity, we use $\cP(A,z)$, $\cP(A \mid z)$, and $\cP(A)$ to denote the joint, conditional, and marginal densities respectively.

\begin{lemma} \label{lem:conditional-second-moment}
Consider the distributions $\cP$ and $\cQ$ from Definitions~\ref{def:mod-p} and~\ref{def:mod-q} respectively.
Recall the latent vector $z \in [0,1]^n$ from Definition~\ref{def:mod-p} whose entries are i.i.d.\ uniform in $[0,1]$. Let $z'$ be an independent copy of $z$.
Let $\cE$ denote an event measurable with respect to $z$ (see \eqref{eq:goodEvent} for the specific choice of $\cE$). 
If $\cP(\cE) = 1-o(1)$ and 
\begin{equation}
\label{eq:goal-of-conditional-second-moment}
\E_{z,z'} \E_{A \sim \cQ} \kr{ \frac{ \cP(A \mid z) \, \cP(A \mid z') }{ \cQ(A)^2 } \bbone_{z \in \cE, \, z' \in \cE} } = 1+o(1),
\end{equation}
then we have $\TV\pr{\cP_A,\cQ} = o(1)$.
\end{lemma}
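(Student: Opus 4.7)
The plan is to introduce a tilted version of $\cP$ obtained by conditioning on the good event $\cE$, then bound $\TV(\cP_A, \cQ)$ by two triangle-inequality pieces: one controlled by $\cP(\cE^c) = o(1)$, and the other by a $\chi^2$-divergence whose formula is exactly the quantity in \eqref{eq:goal-of-conditional-second-moment}.

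Concretely, I would define $\tilde{\cP}$ on $(A,z)$ by $\tilde{\cP}(A,z) := \cP(A,z)\,\bbone_{z \in \cE}/\cP(\cE)$, and let $\tilde{\cP}_A$ be its marginal over $A$. The triangle inequality gives
$$
\TV(\cP_A, \cQ) \;\le\; \TV(\cP_A, \tilde{\cP}_A) \;+\; \TV(\tilde{\cP}_A, \cQ).
$$
The first term is handled by a direct estimate: $\tilde{\cP}_A$ is obtained from $\cP_A$ by deleting the contribution of $\{z \in \cE^c\}$ (of mass $\cP(\cE^c)$) and renormalizing by $1/\cP(\cE)$. Splitting $|\cP_A(A) - \tilde{\cP}_A(A)|$ through the unnormalized measure $\int \cP(A,z)\bbone_{z \in \cE}\,dz$ and integrating over $A$ gives $\TV(\cP_A, \tilde{\cP}_A) \le \cP(\cE^c) = o(1)$.

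For the second term, I would use $\TV(\tilde{\cP}_A, \cQ) \le \tfrac{1}{2}\sqrt{\chi^2(\tilde{\cP}_A \,\|\, \cQ)}$ and compute
$$
1 + \chi^2(\tilde{\cP}_A \,\|\, \cQ) \;=\; \E_{A \sim \cQ}\!\left[\left(\frac{\tilde{\cP}_A(A)}{\cQ(A)}\right)^{\!2}\right] \;=\; \frac{1}{\cP(\cE)^2}\, \E_{z,z'} \E_{A \sim \cQ}\!\left[\frac{\cP(A \mid z)\,\cP(A \mid z')}{\cQ(A)^2}\, \bbone_{z \in \cE,\, z' \in \cE}\right],
$$
where I use $\tilde{\cP}_A(A) = \E_z[\cP(A \mid z)\bbone_{z \in \cE}]/\cP(\cE)$, introduce an independent copy $z'$ to square, and swap the order of expectations. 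By the hypothesis $\cP(\cE) = 1 - o(1)$ and assumption \eqref{eq:goal-of-conditional-second-moment}, the right-hand side equals $(1+o(1))/(1-o(1))^2 = 1 + o(1)$, so $\chi^2(\tilde{\cP}_A \,\|\, \cQ) = o(1)$ and hence $\TV(\tilde{\cP}_A, \cQ) = o(1)$.

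There is no real obstacle in this proof; it is a standard conditional second moment reduction and amounts to careful bookkeeping. The genuine difficulty lies elsewhere in the paper, namely in choosing the event $\cE$ (see \eqref{eq:goodEvent}) so as to simultaneously ensure $\cP(\cE) = 1 - o(1)$ and verify the conditional second moment bound \eqref{eq:goal-of-conditional-second-moment}, which is the heart of the detection lower bound but is not required for the lemma itself.
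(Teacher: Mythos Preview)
Your proposal is correct and follows essentially the same route as the paper: define the conditioned measure $\tilde\cP$ (the paper calls it $\cP'$), use the triangle inequality, bound $\TV(\cP_A,\tilde\cP_A)$ by $\cP(\cE^c)=o(1)$, and bound $\TV(\tilde\cP_A,\cQ)$ via the $\chi^2$-divergence, which unwinds to exactly the expression in \eqref{eq:goal-of-conditional-second-moment} divided by $\cP(\cE)^2$. The only cosmetic difference is that you invoke $\TV\le\tfrac12\sqrt{\chi^2}$ whereas the paper writes $\TV\le\chi^2$, but either suffices once $\chi^2=o(1)$.
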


\begin{proof}
We define $\cP'$ to be the planted model conditional on $\goodEvent$, i.e.,
\begin{equation}\label{eq:def planted model}
\cP'(A,z) := \frac{\cP(A,z)}{\cP(\goodEvent)} \1_{\zb\in\goodEvent}=(1+o(1)) \cP(A,z) \1_{\zb\in\goodEvent} .
\end{equation}
Together with $\cP(\cE) = 1-o(1)$, this implies
\begin{align*}
\TV(\cP_A, \cP'_A) &= \sum_A |\cP(A) - \cP'(A)| 
\le \sum_A \int |\cP(A,z) - \cP'(A,z)| \, dz \\
&\le \sum_A \bigg( \int_{\cE} o(1) \cdot \cP(A,z) \, dz + \int_{\cE^c} \cP(A,z) \, dz \bigg)
= o(1) + \cP(\cE^c) = o(1) .
\end{align*}

Moreover, the likelihood ratio between $\cP'$ and $\cQ$
is given by
\begin{equation*}
\frac{\cP'}{\cQ} (A) = \frac{\int \cP'(A,z) \, dz}{\cQ(A)} 
= \pr{1+o(1)}\frac{\int_{\cE} \cP(A,z) \, dz}{ \cQ(A)}  .
\end{equation*}
As a result,
\begin{equation*}
\E_{A \sim \cQ} \kr{ \pr{\frac{ \cP'}{ \cQ} (A)}^2 } 
= (1+o(1)) \, \E_{A \sim \cQ} \kr{ \frac{\int_{\cE} \int_{\cE} \cP(A,z) \, \cP(A,z') \, dz \, dz'}{ \cQ(A)^2} } = 1 + o(1) 
\end{equation*}
where the last equality holds by \eqref{eq:goal-of-conditional-second-moment} as $\cP(A,z) = \cP(A \mid z) \, \cP(z) = \cP(A \mid z)$.
It follows that
$$
\TV(\cP'_A,\cQ) \le \chi^2(\cP'_A,\cQ) = \E_{A \sim \cQ} \kr{ \pr{\frac{ \cP'}{ \cQ} (A)}^2-1 } = o(1).
$$

Combining the above bounds with the triangle inequality $\TV\pr{\cP_A,\cQ} \leq \TV\pr{\cP_A,\cP'_A} +  \TV\pr{\cP'_A,\cQ}$ then completes the proof.
\end{proof}

\subsection{Computations conditional on the underlying cycle}

To apply Lemma~\ref{lem:conditional-second-moment}, let us first compute the inner expectation in \eqref{eq:goal-of-conditional-second-moment}.

\begin{lemma}\label{lem:second moment conditioned on z}
Let $z \in [0,1]^n$ and $z' \in [0,1]^n$ be fixed. Let $X,X' \in \{0,1\}^{n \times n}$ be defined by $X_{ij} := \bbone_{\dist(z_i,z_j) \le \tau/2}$ and $X'_{ij} := \bbone_{\dist(z'_i,z'_j) \le \tau/2}$ (cf.\ Definition~\ref{def:mod-p}). 
Define $\lambda:=\frac{(p-q)^2}{r(1-r)}$.
Then we have
\begin{equation*}
\E_{A \sim \cQ} \kr{ \frac{ \cP(A \mid z) \, \cP(A \mid z') }{ \cQ(A)^2 }} \leq \exp\br{\lambda\pr{\sum_{1\leq i<j\leq n} X_{ij}'(X_{ij}-\tau) - \tau \sum_{1 \le i < j \le n} (X_{ij} - \tau)}} .
\end{equation*}
\end{lemma}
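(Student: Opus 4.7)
The proof is a direct per-edge computation, exploiting conditional independence of the entries of $A$. The only substantive trick is an algebraic rewriting at the end.

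\textbf{Step 1: Factor over edges.} Conditional on $z$, the entries $\{A_{ij}\}_{i<j}$ are independent with $A_{ij} \sim \mathrm{Bern}(p_{ij})$ where $p_{ij} := p X_{ij} + q(1-X_{ij})$, and under $\cQ$ the entries are i.i.d.\ $\mathrm{Bern}(r)$. Hence
\[
\E_{A \sim \cQ}\!\left[\frac{\cP(A\mid z)\,\cP(A\mid z')}{\cQ(A)^2}\right] \;=\; \prod_{i<j} \E_{A_{ij}\sim\mathrm{Bern}(r)}\!\left[\frac{\cP(A_{ij}\mid X_{ij})\,\cP(A_{ij}\mid X'_{ij})}{\cQ(A_{ij})^2}\right].
\]

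\textbf{Step 2: Evaluate the single-edge expectation.} A direct Bernoulli computation gives, for each $(i,j)$,
\[
\E_{A_{ij}\sim\mathrm{Bern}(r)}\!\left[\frac{\cP(A_{ij}\mid X_{ij})\,\cP(A_{ij}\mid X'_{ij})}{\cQ(A_{ij})^2}\right] \;=\; \frac{p_{ij}\,p'_{ij}}{r} + \frac{(1-p_{ij})(1-p'_{ij})}{1-r},
\]
where $p'_{ij}$ is defined from $X'_{ij}$ as $p_{ij}$ is from $X_{ij}$.

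\textbf{Step 3: Simplify via the identity $p_{ij}-r=(p-q)(X_{ij}-\tau)$.} Since $r = \tau p + (1-\tau) q$, one checks that $p_{ij} - r = (p-q)(X_{ij}-\tau)$, and likewise for $p'_{ij}$. Substituting $p_{ij} = r + (p-q)(X_{ij}-\tau)$ and $p'_{ij} = r + (p-q)(X'_{ij}-\tau)$ into the expression from Step~2 and expanding, the linear terms in $X_{ij}-\tau$ and $X'_{ij}-\tau$ cancel (because $\frac{r\cdot(p-q)}{r}-\frac{(1-r)(p-q)}{1-r}=0$), leaving
\[
\frac{p_{ij}\,p'_{ij}}{r} + \frac{(1-p_{ij})(1-p'_{ij})}{1-r} \;=\; 1 + \frac{(p-q)^2}{r(1-r)}\,(X_{ij}-\tau)(X'_{ij}-\tau) \;=\; 1 + \lambda\,(X_{ij}-\tau)(X'_{ij}-\tau).
\]

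\textbf{Step 4: Take products and rewrite.} Applying $1+x \le e^x$ termwise,
\[
\prod_{i<j}\bigl(1 + \lambda (X_{ij}-\tau)(X'_{ij}-\tau)\bigr) \;\le\; \exp\!\left(\lambda \sum_{i<j} (X_{ij}-\tau)(X'_{ij}-\tau)\right).
\]
Finally, expand $(X_{ij}-\tau)(X'_{ij}-\tau) = X'_{ij}(X_{ij}-\tau) - \tau(X_{ij}-\tau)$ to obtain exactly the expression in the statement.

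There is essentially no obstacle here: the computation is routine, and the only thing worth flagging is the use of $r=\tau p + (1-\tau)q$ in Step~3, which produces the cancellation of cross terms and yields $\lambda$ as the natural signal-to-noise parameter. The rewriting in Step~4 is purely cosmetic but presents the sum in the asymmetric form that will be convenient for the subsequent conditioning-on-$z$ argument.
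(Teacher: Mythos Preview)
Your proof is correct and follows essentially the same approach as the paper: factor over edges, compute the per-edge ratio $\frac{p_{ij}p'_{ij}}{r}+\frac{(1-p_{ij})(1-p'_{ij})}{1-r}$, bound $1+x\le e^x$, and rewrite the exponent. Your centering substitution $p_{ij}-r=(p-q)(X_{ij}-\tau)$ is a slightly cleaner path to the identity $1+\lambda(X_{ij}-\tau)(X'_{ij}-\tau)$ than the paper's case split on $X_{ij},X'_{ij}\in\{0,1\}$, but the two computations are algebraically equivalent and yield the identical exponent.
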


\begin{proof}
Since $X$ and $z$ are deterministic functions of each other, we have $\cP(A \mid z) = \cP(A \mid X)$.
Note that
\begin{equation*}
\frac{ \cP(A\cond X)}{\cQ(A)}            = \prod_{1\leq i<j \leq n} \pr{\frac{p X_{ij} + q (1-X_{ij})}{r}}^{A_{ij}} \pr{\frac{(1-p) X_{ij} + (1-q) (1-X_{ij})}{1-r}}^{1-A_{ij}}.
\end{equation*}
Then we take the expectation with respect to $A \sim \cQ$ (i.e., $A_{ij} \sim \Bern(r)$ i.i.d.) to obtain
\begin{equation}\label{eq:take expectation wrt A}
\begin{aligned}
& \E_{A \sim \cQ} \kr{ \frac{ \cP(A\cond X) \cP(A\cond X')}{\cQ(A)^2} } \\
=  \prod_{1\leq i< j \leq n} & \bigg( \frac{(p X_{ij} + q (1-X_{ij}))(p X_{ij}' + q (1-X_{ij}'))}{r}                                                   \\
& + \frac{((1-p) X_{ij} + (1-q) (1-X_{ij}))((1-p) X_{ij}' + (1-q) (1-X_{ij}'))}{1-r}\bigg)                               \\
=  \prod_{1\leq i<j \leq n}  & \pr{\frac{p^2}{r}+\frac{(1-p)^2}{1-r}}^{X_{ij}X_{ij}'}    
\pr{\frac{q^2}{r}+\frac{(1-q)^2}{1-r}}^{(1-X_{ij})(1-X_{ij}')}                                                           \\
& \cdot \pr{\frac{pq}{r}+\frac{(1-p)(1-q)}{1-r}}^{X_{ij}(1-X_{ij}')+X'_{ij}(1-X_{ij})} 
\end{aligned}
\end{equation}
where the last equality holds because $X_{ij}, X'_{ij} \in \{0,1\}$.
By the definition $r = \tau p + (1-\tau)q$, we have $p-r = (1-\tau)(p-q)$ and $q-r = -\tau(p-q)$. Together with the definition $\lambda = \frac{(p-q)^2}{r(1-r)}$ and the fact $1+x \le e^x$, this implies the following elementary identities
\begin{align}\label{eq:elementary identities}
\begin{aligned}
\frac{p^2}{r}+\frac{(1-p)^2}{1-r}   & = 1+\frac{(p-r)^2}{r(1-r)} \le \exp\left( \frac{(p-r)^2}{r(1-r)} \right) = \exp\left( \lambda (1-\tau)^2 \right) ,   \\
\frac{q^2}{r}+\frac{(1-q)^2}{1-r}   & = 1+\frac{(q-r)^2}{r(1-r)} \le \exp\left( \frac{(q-r)^2}{r(1-r)} \right)= \exp\left( \lambda \tau^2 \right)  , \\
\frac{pq}{r}+\frac{(1-p)(1-q)}{1-r} & = 1+\frac{(p-r)(q-r)}{r(1-r)} \le \exp\left( \frac{(p-r)(q-r)}{r(1-r)} \right) = \exp\left( - \lambda \tau (1-\tau) \right) .
\end{aligned}
\end{align}
Combining \eqref{eq:take expectation wrt A} and \eqref{eq:elementary identities}, we get
\begin{equation*}
\begin{aligned}
\E_{A \sim \cQ}\bigg[ \frac{ \cP(A\cond X) \cP(A\cond X')}{\cQ(A)^2} \bigg]
&\le \exp\bigg( \lambda \sum_{1\leq i<j \leq n} \Big( (1-\tau)^2 X_{ij}X_{ij}' + \tau^2 (1-X_{ij})(1-X_{ij}') \\
& \qquad \qquad \qquad \qquad - \tau (1-\tau) (X_{ij}(1-X_{ij}')+X'_{ij}(1-X_{ij})) \Big) \bigg) \\
&= \exp\bigg( \lambda \sum_{1\leq i<j \leq n} \Big( X_{ij}X_{ij}' - \tau (X_{ij} + X'_{ij}) + \tau^2 \Big) \bigg) ,
\end{aligned}
\end{equation*}
where the last quantity is equal to the desired bound.
\end{proof}

Note the particular form of the bound in Lemma~\ref{lem:second moment conditioned on z}: The second term in the exponent
$$
\sum_{1 \le i < j \le n} (X_{ij} - \tau)
= \sum_{1 \le i < j \le n} (\bbone_{\dist(z_i,z_j) \le \tau/2} - \tau)
$$
is a $U$-statistic with zero mean as $\p\{\dist(z_i,z_j) \le \tau/2\} = \tau$, and so is the first term
$$
\sum_{1\leq i<j\leq n} X_{ij}'(X_{ij}-\tau) = \sum_{1\leq i<j\leq n: X_{ij}' = 1} (X_{ij}-\tau)
$$
once we condition on $z'$ (i.e., on $X'$). 
Therefore, to bound the conditional second moment of the likelihood ratio, it essentially boils down to analyzing the concentration properties of these $U$-statistics, which is the most technical part of the proof.
Let us first show how Theorem~\ref{thm:detection lower bound} follows from Lemmas~\ref{lem:conditional-second-moment} and~\ref{lem:second moment conditioned on z} before analyzing the $U$-statistics.

\subsection{Proof of Theorem~\ref{thm:detection lower bound}}
\label{sec:proof-of-detect-lower}

By Lemmas~\ref{lem:conditional-second-moment} and~\ref{lem:second moment conditioned on z}, it suffices to find an event $\cE$ such that $\cP(\cE) = 1-o(1)$ and 
\begin{equation}
\E_{z,z'} \left[ \exp\br{\lambda\pr{\sum_{1\leq i<j\leq n} X_{ij}'(X_{ij}-\tau) - \tau \sum_{1 \le i < j \le n} (X_{ij} - \tau)}} \bbone_{z \in \cE, \, z' \in \cE} \right] = 1+o(1).
\label{eq:condition-mgf-small-1}
\end{equation}
To this end, we find two events, $\goodEvent_0$ defined in \eqref{eq:goodEvent_2} and $\goodEvent_1$ defined in \eqref{eq:goodEvent_1}, and let
\begin{equation}\label{eq:goodEvent}
\goodEvent := \goodEvent_0 \cap \goodEvent_1 .
\end{equation} 
By Lemmas~\ref{lem:bound_goal1} and~\ref{lem:good event}, we indeed have $\cP(\goodEvent) = 1-o(1)$. 
Furthermore, Lemmas~\ref{lem:bound_goal1} and~\ref{lem:bound_goal2} imply, respectively,
\begin{align}
& \exp\Big(-2\lambda\tau\sum_{1\leq i<j\leq n} (X_{ij} - \tau)\Big) = 1 + o(1) \quad \text{ for any } \zb \in \goodEvent_0 , \label{eq:goal1} \\
& \E_{\zb} \Big[\exp\Big(2\lambda\sum_{1\leq i<j\leq n} X_{ij}' (X_{ij}-\tau)\Big)\Big] = 1 + o(1) \quad \text{ for any } \zb'\in\goodEvent_1 . \label{eq:goal2}
\end{align}

It remains to show that \eqref{eq:condition-mgf-small-1} follows from \eqref{eq:goal1} and \eqref{eq:goal2}.
The convexity of the exponential function implies $e^{A+B} - 1 \le \frac 12 (e^{2A} + e^{2B} - 2)$ for any $A,B \in \R$.
Let $B = \lambda \sum_{1\leq i<j\leq n} X_{ij}'(X_{ij}-\tau)$ and $A = - \lambda \tau \sum_{1 \le i < j \le n} (X_{ij} - \tau)$.
Since $\cP(\cE) = 1-o(1)$, we obtain
\begin{align*}
\E_{z,z'}[e^{A+B} \bbone_{z \in \cE, \, z' \in \cE} - 1]
&= \E_{z,z'}[ (e^{A+B} - 1) \bbone_{z \in \cE, \, z' \in \cE}] + o(1) \\
&\le \frac 12 \Big( \E_{z,z'}[ (e^{2A} - 1) \bbone_{z \in \cE, \, z' \in \cE}] + \E_{z,z'}[ (e^{2B} - 1) \bbone_{z \in \cE, \, z' \in \cE}] \Big) + o(1) \\
&= \frac 12 \Big( \E_{z,z'}[ e^{2A} \bbone_{z \in \cE, \, z' \in \cE}] - 1 + \E_{z,z'}[ e^{2B} \bbone_{z \in \cE, \, z' \in \cE}] - 1 \Big) + o(1) \\
&\le \frac 12 \Big( \E_{z,z'}[ e^{2A} \bbone_{z \in \cE_0}] - 1 + \E_{z,z'}[ e^{2B} \bbone_{z' \in \cE_1}] - 1 \Big) + o(1) .
\end{align*}
The above bound is $o(1)$ by \eqref{eq:goal1} and \eqref{eq:goal2}, so \eqref{eq:condition-mgf-small-1} follows.

\subsection{Concentration of U-statistics}
\label{sec:concentration-u-statistics-three-steps}

In the rest of this section, we prove \eqref{eq:goal1} and \eqref{eq:goal2} which involve the $U$-statistics $\sum_{1\leq i<j\leq n} (X_{ij}-\tau)$ and $\sum_{1\leq i<j\leq n} X_{ij}'(X_{ij}-\tau)$.
Some useful concentration inequalities for $U$-statistics are given in Appendix~\ref{sec:u-stats}.
First, \eqref{eq:goal1} is an immediate consequence of the following lemma.

\begin{lemma}
\label{lem:bound_goal1}
Consider i.i.d.\ $z_1, \dots, z_n \sim \Unif([0,1])$ and let $X_{ij} := \bbone_{\dist(z_i,z_j) \le \tau/2}$ for $1 \le i < j \le n$. 
Suppose that the parameters satisfy $\tau \to 0$ and $n \lambda \tau^{3/2} \to 0$ as $n \to \infty$ (which is weaker than \eqref{eq:cond_detection_lower_bound}).
Then there are two sequences of positive numbers 
$t_n \to 0$ and $c_n \to 0$ as $n \to 0$ such that $\p(\cE_0) \ge 1 - c_n$ where the event $\cE_0$ is defined as
\begin{equation}\label{eq:goodEvent_2}
\goodEvent_0 := \Big\{ \zb\in [0,1]^n: \lambda \tau \Big| \sum_{1\leq i<j\leq n} (X_{ij} - \tau) \Big| \le t_n \Big\} .
\end{equation}
\end{lemma}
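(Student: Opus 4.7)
The plan is to apply Chebyshev's inequality to the centered sum
\[
S := \sum_{1 \le i < j \le n} (X_{ij} - \tau),
\]
exploiting the fact that overlapping edge indicators $X_{ij}$ and $X_{ik}$ are in fact \emph{uncorrelated} (not merely weakly correlated), thanks to the rotation invariance of the uniform distribution on the circle. Because the variance will turn out to be as small as that of a sum of independent $\mathrm{Bern}(\tau)$ variables, no $U$-statistic concentration machinery from Appendix~\ref{sec:u-stats} is needed for this lemma; Chebyshev suffices.

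First I would compute $\mathrm{Var}(S)$ by splitting into three cases based on how the index pairs $\{i,j\}$ and $\{k,\ell\}$ intersect. When the pairs are disjoint, $X_{ij}$ and $X_{k\ell}$ depend on disjoint coordinates of $z$ and are therefore independent. When the pairs coincide, the contribution is $\mathrm{Var}(X_{ij}) = \tau(1-\tau)$. The only delicate case is when the pairs share exactly one index, say $j = \ell$. Here, conditional on $z_j$, the events $\{\dist(z_i,z_j) \le \tau/2\}$ and $\{\dist(z_k,z_j) \le \tau/2\}$ are independent and each has conditional probability exactly $\tau$, since the arc of length $\tau$ around $z_j$ on the unit-circumference circle has measure $\tau$. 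Consequently $\E[X_{ij} X_{kj}] = \tau^2 = \E[X_{ij}]\,\E[X_{kj}]$, so this covariance vanishes. Combining the three cases gives the clean identity
\[
\mathrm{Var}(S) \;=\; \binom{n}{2} \tau(1-\tau) \;\le\; \tfrac{n^2 \tau}{2}.
\]

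Given this variance bound, the remainder is a one-line application of Chebyshev. Setting $\sigma_n := n \lambda \tau^{3/2}$, I would take $t_n := \sqrt{\sigma_n}$ and $c_n := \sigma_n/2$; both tend to $0$ under the hypothesis $n\lambda\tau^{3/2} \to 0$. Chebyshev then yields
\[
\p\!\left( \lambda\tau\, |S| > t_n \right) \;\le\; \frac{(\lambda\tau)^2 \,\mathrm{Var}(S)}{t_n^2} \;\le\; \frac{\sigma_n^2/2}{\sigma_n} \;=\; c_n,
\]
which is exactly the claim $\p(\cE_0) \ge 1 - c_n$.

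The only non-routine ingredient is the covariance cancellation for edges sharing a single vertex; this cancellation is a direct consequence of the rotation-invariant geometric model on the circle, and it is what allows $S$ to concentrate as sharply as a sum of $\binom{n}{2}$ independent $\mathrm{Bern}(\tau)$ variables despite the $U$-statistic structure. There is no real obstacle here: the second moment is already sufficient. The heavier machinery previewed in Section~\ref{sec:concentration-u-statistics-three-steps}, together with the splitting of $X$ into independent groups via Lemma~\ref{lem:independence of U}, is instead reserved for the substantially harder conditional bound~\eqref{eq:goal2}, where conditioning on $X'$ destroys the symmetry used here.
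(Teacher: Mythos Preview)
Your proof is correct and more elementary than the paper's. The paper proves Lemma~\ref{lem:bound_goal1} by first decoupling the $U$-statistic via Lemma~\ref{lem:decoupling} and then applying the exponential tail bound of Lemma~\ref{lem:hti moments}, obtaining the explicit inequality~\eqref{eq:e-0-tail-bound}. You instead observe that, because $\p(\dist(z_i,z_j)\le\tau/2 \mid z_j)=\tau$ deterministically, edges sharing a vertex are exactly uncorrelated (an argument the paper itself uses later, in the proof of Lemma~\ref{lem:u-2-moment}); this collapses the variance of $S$ to $\binom{n}{2}\tau(1-\tau)$, and Chebyshev with $t_n=\sqrt{\sigma_n}$ finishes. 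The trade-off is that Chebyshev only delivers $c_n=O(\sigma_n)$, whereas the paper's exponential tail~\eqref{eq:e-0-tail-bound} is reused elsewhere: in Lemma~\ref{lem:kl-go-to-zero} it is invoked to push $c_n\le n^{-10}$ under the slightly stronger hypothesis $n\lambda\tau^{3/2}\log n\to 0$, and in the proof of Theorem~\ref{thm:upper bound on detection} it is quoted verbatim to control $\big||X|-\binom{n}{2}\tau\big|$. So for the lemma as stated your argument is cleaner and entirely sufficient, but the paper's heavier route buys a reusable sub-Weibull tail that is called upon twice more.
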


\begin{proof}
Let $z''$ be an independent copy of $z = (z_1, \dots, z_n)$, and define $\tilde X_{ij} := \bbone_{\dist(z_i, z''_j) \le \tau/2}$. 
By the tail bounds in Lemmas~\ref{lem:decoupling} and~\ref{lem:hti moments}, there are absolute constants $K_1, K_2 > 0$ such that for any $t>0$,
\begin{align*}
\p \Big\{\Big|\sum_{1\leq i<j\leq n} (X_{ij} - \tau) \Big| > t \Big\} 
&\le K_1 \, \p \Big\{K_1 \Big|\sum_{1\leq i<j\leq n} (\tilde X_{ij} - \tau) \Big| > t \Big\} \\
&\le K_2 \exp\br{-\frac{1}{K_2}\min\kr{\frac{t}{C},\pr{\frac{t}{B}}^{2/3},\pr{\frac{t}{A}}^{1/2}}} ,
\end{align*}
%
%
where
\begin{itemize}
\item $A = \max\{1-\tau,0-\tau\} \leq 1$;

\item $C^2 = \sum_{1\leq i<j\leq n} \E [(\tilde X_{ij}-\tau)^2] \leq n^2 \tau$;

\item $B^2 \leq n\tau$ because for any $1 \le i < j \le n$ and any $x,y \in [0,1]$, 
\begin{equation*}
\sum_{j=i+1}^n \E_{z_j''} [(\1_{\dfrak(x,z_j'')\leq \tau/2}-\tau)^2] \le n \tau, \quad \sum_{i=1}^{j-1} \E_{z_i}  [(\1_{\dfrak(z_i,y)\leq \tau/2}-\tau)^2] \leq n\tau.
\end{equation*}
\end{itemize}
Replacing $t$ by $\frac{t}{\lambda \tau}$ and plugging in the bounds on $A,B,C$, we obtain
\begin{align}
\p \Big\{\lambda \tau \Big|\sum_{1\leq i<j\leq n} (X_{ij} - \tau) \Big| > t \Big\} 
\le K_2 \exp\br{-\frac{1}{K_2}\min\kr{\frac{t}{n \lambda \tau^{3/2}}, \frac{t^{2/3}}{n^{1/3} \lambda^{2/3} \tau}, \frac{t^{1/2}}{\lambda^{1/2} \tau^{1/2}}}} . \label{eq:e-0-tail-bound}
\end{align}
By the assumptions $n \lambda \tau^{3/2} \to 0$ and $\tau \to 0$, the above bound tends to $0$ as $n \to 0$ for any fixed $t>0$ or even if $t=t_n \to 0$ sufficiently slowly. The conclusion follows.
%
\end{proof}

Next, we move on to the proof of \eqref{eq:goal2}, the most technical step of the lower bound. Let us start with some informal observations that motivate the proof.

\begin{remark}
\label{rmk:technical}
Note that for a typical $z'$, i.e., $X'$, we have $X'_{ij} = 1$ for $O(n^2 \tau)$ pairs $(i,j) \in \binom{[n]}{2}$. The variance of $X_{ij}$ is $\tau(1-\tau)$, so conditional on a typical $z'$, the variance of $\sum_{1\leq i<j\leq n} X_{ij}' (X_{ij}-\tau)$ is $O(n^2 \tau^2)$.
Consequently, the $U$-statistic in \eqref{eq:goal2} is expected to be of order
\begin{equation}
\lambda \sum_{1\leq i<j\leq n} X_{ij}' (X_{ij}-\tau) = O(n \tau \lambda) 
\label{eq:u-stats-typical-size}
\end{equation}
with high probability.
We therefore need $n \tau \lambda \to 0$ (cf.\ \eqref{eq:cond_detection_lower_bound}) for \eqref{eq:goal2} to hold.
However, there are a few obstacles in proving \eqref{eq:goal2} rigorously:
\begin{itemize}
\item
The above argument would have finished the proof if we were allowed to condition on an event $\cE$ defined in terms of the pair $(z,z')$, i.e., $(X,X')$. However, this is not allowed, because in the conditional second moment method (Lemma~\ref{lem:conditional-second-moment}), the event $\cE$ is defined in terms of one copy $z$.

\item
For a worst-case $z'$ such that $X'_{ij} = 1$ for all $1 \le i < j \le n$, we have $\lambda \sum_{1\leq i<j\leq n} X_{ij}' (X_{ij}-\tau)$ of order $n \sqrt{\tau} \lambda$, which would yield a suboptimal condition $n \sqrt{\tau} \lambda \to 0$. Therefore, we need to condition on $z' \in \cE_1$ for the event $\cE_1$ defined in \eqref{eq:goodEvent_1}.

\item
Conditional on $z' \in \cE_1$, \eqref{eq:goal2} does not follow trivially from the typical order \eqref{eq:u-stats-typical-size} because the $U$-statistic $\sum_{1\leq i<j\leq n} X_{ij}' (X_{ij}-\tau)$ can potentially be very large on a rare event. Furthermore, \eqref{eq:goal2} does not follow from standard concentration of $U$-statistics (Lemma~\ref{lem:hti moments}) either. To see this, note that the tail bound in Lemma~\ref{lem:hti moments} at best gives 
$$
\p\bigg\{ \bigg| \sum_{1\leq i<j\leq n} X_{ij}' (X_{ij}-\tau) \bigg| > t \bigg\} \le K \exp(-t^{1/2}/K)
$$
for a constant $K>0$ because $A$ is of order $1$. Taking $t$ to be of order $n^2 \tau$, we see that $\lambda \sum_{1\leq i<j\leq n} X_{ij}' (X_{ij}-\tau)$ would be of order $n^2 \tau \lambda$ with probability $K \exp(-n \sqrt{\tau} / K)$. For \eqref{eq:goal2} to hold, we would at least need $n^2 \tau \lambda \le n \sqrt{\tau}$ which gives the suboptimal condition $n \sqrt{\tau} \lambda \le 1$.
\end{itemize}
Due to the above obstacles, \eqref{eq:goal2} requires a delicate proof, where we first split the summands in $\sum_{1\leq i<j\leq n} X_{ij}' (X_{ij}-\tau)$ into a few independent groups and then make use of the independence together with moment inequalities for $U$-statistics.
\end{remark}

\paragraph{Step 1: Splitting the U-statistic into independent sums}
To study the $U$-statistic
\begin{equation}
S := \sum_{1\leq i<j\leq n} X_{ij}' (X_{ij}-\tau) ,
\label{eq:def-total-S}
\end{equation}
we will split it into three terms $S_1+S_2+S_3$, each of which is an independent sum.
Towards this end, we define
\begin{equation}
k := \max\{ i : i \text{ is an even integer} , \, i \le 1/\tau \} ,
\label{eq:def-k-1/tau}
\end{equation}
and define blocks 
\begin{equation}
B_\ell := [(\ell-1)\tau,\ell\tau) \text{ for } 1 \leq \ell < k, \quad B_{k}:= [(k-1)\tau,1) . 
\label{eq:blocks}
\end{equation}
Then $[0,1) = \bigcup_{i=1}^{k} B_i$. 
For $1 \le \ell \le k$, let
\begin{equation} \label{eq:bucket index def}
\begin{aligned} 
I_{\ell} &:= \br{i \in [n]: z_i'\in B_\ell} , \quad I_{k+1} := I_1 , \\
J_{\ell}^{1} &:= \br{(i,j) \in \binom{[n]}{2} : i, j\in I_{\ell}}, \\
J_{\ell}^{2} &:= \br{(i,j) \in \binom{[n]}{2} : i\in I_{\ell}, \, j\in I_{\ell+1} \text{ or } j\in I_{\ell}, \, i\in I_{\ell+1}} \text{ if $\ell$ is odd}, \quad J_\ell^2 := \varnothing \text{ if $\ell$ is even} , \\
J_{\ell}^{3} &:= \br{(i,j) \in \binom{[n]}{2} : i\in I_{\ell}, \, j\in I_{\ell+1} \text{ or } j\in I_{\ell}, \, i\in I_{\ell+1}} \text{ if $\ell$ is even}, \quad J_\ell^3 := \varnothing \text{ if $\ell$ is odd} .
\end{aligned}
\end{equation}
For $\ii = 1,2,3$, we define
\begin{equation}
U_\ell^{(\ii)} := \sum_{(i,j)\in J^{\ii}_{\ell}} X_{ij}' (X_{ij}-\tau) , \qquad
S_{\ii} := \sum_{\ell=1}^k U_\ell^{(\ii)} .
\label{eq:def-U-123}
\end{equation}

\begin{lemma}
\label{lem:independence of U}
In the notation above, we have $S = S_1+S_2+S_3$.
Moreover, for any fixed $z' \in [0,1]^n$ and each $1\leq \ii\leq 3$, the quantity $S_{\ii}$ is an independent sum, i.e., $U_1^{(\ii)},U_2^{(\ii)},\dots,U_k^{(\ii)}$ are independent, with respect to the randomness of $z$. 
\end{lemma}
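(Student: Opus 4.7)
The plan is to verify the two claims --- the additive decomposition and the independence --- separately, and both reduce to combinatorial checks about how the blocks $B_1,\dots,B_k$ interact with the cycle support.

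First I would establish that if $X'_{ij}=1$, then the blocks containing $z'_i$ and $z'_j$ coincide or are adjacent on the circle (where $B_k$ is adjacent to $B_1$). This is the geometric content: because each block has length at least $\tau$ and $\dist(z'_i,z'_j)\le \tau/2$, the shorter arc from $z'_i$ to $z'_j$ crosses at most one block boundary. Consequently, any pair $(i,j)$ that contributes nontrivially to the sum $S$ must have $(i,j)\in I_\ell\times I_\ell$ for some $\ell$, or one endpoint in $I_\ell$ and the other in $I_{\ell+1}$ for some $\ell\in\{1,\dots,k\}$ (indices mod $k$, as encoded by $I_{k+1}=I_1$).

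Next I would check that the families $\{J_\ell^{(1)}\}_{\ell=1}^k$, $\{J_\ell^{(2)}\}_{\ell \text{ odd}}$ and $\{J_\ell^{(3)}\}_{\ell \text{ even}}$ form a partition of this collection of ``same-block or adjacent-block'' pairs. The within-block pairs are covered exactly once by the $J_\ell^{(1)}$'s. Each cross-block pair $(I_\ell,I_{\ell+1})$ is covered exactly once: by $J_\ell^{(2)}$ when $\ell$ is odd, and by $J_\ell^{(3)}$ when $\ell$ is even. Since $k$ is even, the wrap-around pair $(I_k,I_1)$ falls under $J_k^{(3)}$, which is consistent. Summing over all nontrivial $J_\ell^{(\iota)}$ and adding back zero contributions from pairs with $X'_{ij}=0$ gives $S=S_1+S_2+S_3$.

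For the independence claim, I would observe that once $z'$ is fixed, the index sets $I_1,\dots,I_k$ are deterministic, and $U_\ell^{(\iota)}$ is a function only of the coordinates $\{z_i : i\in V_\ell^{(\iota)}\}$, where $V_\ell^{(1)}=I_\ell$ and $V_\ell^{(\iota)}=I_\ell\cup I_{\ell+1}$ for $\iota\in\{2,3\}$. Thus it suffices to verify that for each fixed $\iota$, the sets $\{V_\ell^{(\iota)}\}_\ell$ (ranging over those $\ell$ for which $J_\ell^{(\iota)}\ne\varnothing$) are pairwise disjoint; independence then follows from the independence of $z_1,\dots,z_n$. For $\iota=1$ this is immediate since the $I_\ell$'s are disjoint. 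For $\iota=2$ the relevant sets are $I_1\cup I_2,\, I_3\cup I_4,\dots,\, I_{k-1}\cup I_k$, which partition the odd/even pairs without overlap because $k$ is even. For $\iota=3$ the sets are $I_2\cup I_3,\, I_4\cup I_5,\dots,\, I_k\cup I_1$, which again cover $I_1,\dots,I_k$ each exactly once. This is essentially the parity trick that motivated choosing $k$ to be even in the first place, and is the one step in the proof that requires any thought --- everything else is bookkeeping.
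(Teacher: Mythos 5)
Your proof is correct and follows essentially the same route as the paper's: the geometric observation that $X'_{ij}=1$ forces $z'_i,z'_j$ into the same or adjacent blocks (each of length at least $\tau>\tau/2$), the resulting partition of the contributing pairs among the disjoint sets $J_\ell^{(\iota)}$, and independence via measurability of each $U_\ell^{(\iota)}$ with respect to disjoint groups of coordinates $\{z_i\}_{i\in I_\ell}$ or $\{z_i\}_{i\in I_\ell\cup I_{\ell+1}}$, using that $k$ is even. Your write-up is in fact slightly more careful than the paper's on the "covered exactly once" bookkeeping, but there is no substantive difference.
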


\begin{proof}
First, by definition, $X'_{ij} = 1$ if and only if $\dist(z'_i,z'_j) \le \tau/2$. 
Therefore, if $X'_{ij}(X_{ij}-\tau)$ is a nonzero summand in $S$, then $z'_i$ and $z'_j$ are either in the same block $B_\ell$ or in adjacent blocks $B_\ell$ and $B_{\ell+1}$ for some $1\le \ell \le k$.
In view of \eqref{eq:bucket index def}, the pair $(i,j)$ must be in $J^1_\ell$, $J^1_\ell$, or $J^3_\ell$.
It then follows that $S = S_1 + S_2 + S_3$.

Next, fix $z'$ (i.e., $X'$) and consider the randomness of $z$ (i.e., $X$).
Note that for each $1 \le \ell \le k$, the random variable $U_\ell^{(1)}$ only involves $X_{ij}$ for $(i,j) \in J^1_\ell$. Hence $U_\ell^{(1)}$ is measurable with respect to $\br{z_i}_{i\in I_{\ell}}$.
Since $z_1, \dots, z_n$ are independent and $I_1, \dots, I_k$ are disjoint, we see that $U^{(1)}_1, \dots, U^{(1)}_k$ are independent.

The argument for $U^{(\ii)}_\ell$ where $i = 2,3$ is similar. Consider $U^{(2)}_\ell$.
If $\ell$ is even, then $U^{(2)}_\ell = 0$. If $\ell$ is odd, then $U_\ell^{(2)}$ is measurable with respect to $\br{z_i}_{i\in I_{\ell}\cup I_{\ell+1}}$.
Since the $k/2$ sets $I_1 \cup I_2, I_3 \cup I_4, \dots, I_{k-1} \cup I_k$ are disjoint, we see that $U^{(2)}_1, \dots, U^{(2)}_k$ are independent.
Analogously,  $U^{(3)}_1, \dots, U^{(3)}_k$ are also independent.
\end{proof}

Fix $z' \in \cE_1$ where $\cE_1$ is to be defined in \eqref{eq:goodEvent_1}, we consider the expectation with respect to the randomness of $z$. 
We use the convexity of $\exp(x)-1$ to obtain
\begin{equation}
\E_z \kr{\exp(2\lambda S) - 1} \leq \frac 13 \sum_{\ii=1}^3 \E_z \kr{\exp(6\lambda S_\ii)-1}.
\label{eq:S-123-Jensen}
\end{equation}
Hence, to prove \eqref{eq:goal2}, it suffices to bound the moment generating function of $S_\ii$ for $1\leq \ii\leq 3$.

\paragraph{Step 2: Controlling individual summands}
To study $S_\ii$ which is a sum of $k$ independent $U$-statistics, we first focus on each individual $U$-statistic $U_\ell^{(\ii)}$ for fixed $1 \le i \le 3, \, 1 \le \ell \le k$.
The number of summands in each $U_\ell^{(\ii)}$ is determined by $z'$ in view of \eqref{eq:bucket index def} and \eqref{eq:def-U-123}.
To control this, we now formally introduce the high-probability event $\cE_1$ about $z'$.

\begin{lemma}
\label{lem:good event}
For i.i.d.\ $z'_1, \dots, z'_n \sim \Unif([0,1])$, let $I_\ell$ be defined in \eqref{eq:bucket index def}.
Suppose that $\frac{n \tau}{\log n} \to \infty$ as $n \to \infty$.
Then there is a constant $n_0 > 0$ such that for all $n \ge n_0$, we have $\p(\goodEvent_1) \geq 1-n^{-10}$, where the event $\cE_1$ is defined as
\begin{equation}\label{eq:goodEvent_1}
\goodEvent_1 := \br{z' \in [0,1]^n : \abs{I_\ell} \leq 4n\tau \text{ for all } 1 \le \ell \le k}.
\end{equation}
\end{lemma}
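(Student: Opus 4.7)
The plan is to prove this bound via a standard Chernoff inequality applied to each $|I_\ell|$, followed by a union bound over the $k$ blocks. There is no technical obstacle here beyond bookkeeping; the assumption $n\tau/\log n \to \infty$ is exactly what is needed so that a Chernoff tail of order $\exp(-cn\tau)$ beats a union bound over $k \le 1/\tau \le n$ blocks.

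More concretely, I would first observe that since $z'_1, \dots, z'_n$ are i.i.d.\ $\Unif([0,1])$, the random variable $|I_\ell|$ is a $\Binomial(n, p_\ell)$ random variable, where $p_\ell$ is the Lebesgue measure of $B_\ell$. For $1 \le \ell \le k-1$ we have $p_\ell = \tau$, and for the last block we have $p_k = 1 - (k-1)\tau$. From the choice of $k$ in \eqref{eq:def-k-1/tau} as the largest even integer with $k \le 1/\tau$, one obtains $k\tau \le 1 < (k+2)\tau$, and hence $p_k \le 3\tau$. In particular $\E[|I_\ell|] \le 3n\tau$ for every $\ell$.

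Next I would apply the standard multiplicative Chernoff bound: for any $\mu_\ell := \E[|I_\ell|] \le 3n\tau$,
\[
\p\bigl(|I_\ell| \ge 4n\tau\bigr) \le \p\bigl(|I_\ell| \ge \tfrac{4}{3}\mu_\ell\bigr) \le \exp\!\bigl(-c\mu_\ell\bigr) \le \exp\!\bigl(-c' n\tau\bigr)
\]
for some absolute constants $c, c' > 0$, where the last step uses $\mu_\ell \ge \min\{n\tau, n p_k\}$ and the fact that $np_k$ is at least $n\tau$ up to a constant (using that $k$ is even and $k\tau \ge 1/2$ for $\tau$ small enough). A union bound over $\ell = 1, \dots, k$, combined with $k \le 1/\tau \le n$, then yields
\[
\p(\cE_1^c) \le k \cdot \exp\!\bigl(-c'n\tau\bigr) \le n \exp\!\bigl(-c' n\tau\bigr).
\]

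Finally, under the hypothesis $n\tau/\log n \to \infty$, for all sufficiently large $n$ we have $c' n\tau \ge 11 \log n$, which makes the right-hand side at most $n \cdot n^{-11} = n^{-10}$. This gives the desired $\p(\cE_1) \ge 1 - n^{-10}$ for $n \ge n_0$, completing the proof.
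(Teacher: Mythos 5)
Your proof is correct and takes essentially the same route as the paper's: identify each $|I_\ell|$ as a binomial whose mean is at most $3n\tau$ (since the last block has length less than $3\tau$), apply a standard concentration inequality to get a tail of order $\exp(-cn\tau)$, and union bound over the $k \le 1/\tau \le n$ blocks, using $n\tau/\log n \to \infty$ to beat $n^{-11}$ per block. The only cosmetic difference is that the paper invokes Bernstein's inequality (via stochastic domination by $\Bin(n,3\tau)$) where you use the multiplicative Chernoff bound.
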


\begin{proof}
By the definition \eqref{eq:blocks}, the largest block $B_k$ has size at most $3 \tau$ (and the other blocks have size $\tau$), the random variable $|I_\ell| = \abs{\{i \in [n] : z_i' \in B_\ell\}}$ is stochastically dominated by $\Bin(n, 3 \tau)$. 
By Bernstein's inequality, with probability at least $1-n^{-11}$, we have
\begin{equation}
\abs{I_\ell} \leq 3n\tau + C\Big( \sqrt{n\tau\log n} + \log n \Big) \leq 4 n \tau
\end{equation}
for all sufficiently large $n$ since $\frac{n\tau}{\log n} \to \infty$.
Applying a union bound finishes the proof.
\end{proof}  

In the sequel, we condition on $z' \in \cE_1$ and estimate the moments of $U^{(\ii)}_\ell$ with respect to the randomness of $z$. Let us start with the second moment.

\begin{lemma}
\label{lem:u-2-moment}
Fix $z' \in \cE_1$ and consider the randomness with respect to $z$. Then for any $1 \le \ii \le 3$ and $1 \le \ell \le k$, we have
$$
\E[(U_\ell^{(\ii)})^2] \le 16 n^2 \tau^3 .
$$
\end{lemma}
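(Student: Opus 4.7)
The plan is to expand the square and use the fact that the only pairs of edges whose associated centered indicators have nonzero covariance (under the randomness of $z$) are identical pairs, then use $z'\in\cE_1$ to bound the number of terms.

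Write
\begin{equation*}
\E\bigl[(U_\ell^{(\ii)})^2\bigr] = \sum_{(i,j),(i',j')\in J^{\ii}_\ell} X'_{ij}\,X'_{i'j'}\,\E\bigl[(X_{ij}-\tau)(X_{i'j'}-\tau)\bigr],
\end{equation*}
where $X'$ is treated as a fixed constant and the expectation is over $z$ only. First I would show that the covariance $\E[(X_{ij}-\tau)(X_{i'j'}-\tau)]$ vanishes unless $\{i,j\}=\{i',j'\}$. If $\{i,j\}\cap\{i',j'\}=\varnothing$, the two indicators depend on disjoint sets of $z_m$'s and are therefore independent, giving covariance $0$. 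If $\{i,j\}\cap\{i',j'\}=\{v\}$, then conditional on $z_v$ the two indicators are independent, and moreover $\E[X_{ij}\mid z_v]=\tau$ for any value of $z_v$ since the other endpoint is uniform on the circle; hence $\E[X_{ij}X_{i'j'}]=\E[\tau\cdot\tau]=\tau^2$ and the covariance is again $0$.

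Thus only the diagonal terms $\{i,j\}=\{i',j'\}$ contribute, and each diagonal term equals $(X'_{ij})^2\,\mathrm{Var}(X_{ij}) = X'_{ij}\,\tau(1-\tau) \le \tau$ since $X'_{ij}\in\{0,1\}$. This yields the clean bound
\begin{equation*}
\E\bigl[(U_\ell^{(\ii)})^2\bigr] \le \tau\,\bigl|J^{\ii}_\ell\bigr|.
\end{equation*}

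It remains to bound $|J^{\ii}_\ell|$ using the assumption $z'\in\cE_1$, which gives $|I_m|\le 4n\tau$ for every $m$. For $\ii=1$ we have $|J^1_\ell|\le\binom{|I_\ell|}{2}\le 8 n^2\tau^2$; for $\ii\in\{2,3\}$ the pairs in $J^{\ii}_\ell$ have one endpoint in $I_\ell$ and the other in $I_{\ell+1}$, so $|J^{\ii}_\ell|\le |I_\ell|\cdot|I_{\ell+1}|\le 16 n^2\tau^2$. Multiplying by $\tau$ gives the claim. No step is really an obstacle here; the only mildly delicate point is the identity $\E[X_{ij}\mid z_v]=\tau$, which relies on the fact that the latent positions are uniform on the circle so the conditional law of the indicator does not depend on $z_v$.
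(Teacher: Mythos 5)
Your proposal is correct and follows essentially the same route as the paper: expand the square, observe that cross terms vanish (the paper phrases this via conditional independence given the shared coordinate together with $\E[h_{ij}]=0$, which is the same mechanism as your $\E[X_{ij}\mid z_v]=\tau$ computation), and then bound the number of diagonal terms by $|J^{\ii}_\ell|\le 16n^2\tau^2$ using $z'\in\cE_1$. Your treatment of the one-shared-vertex case is if anything slightly more explicit than the paper's.
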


\begin{proof}
Writing $h_{ij} = X'_{ij} (X_{ij} - \tau)$ for brevity, we have $U_\ell^{(\ii)} := \sum_{(i,j)\in J^{\ii}_{\ell}} h_{ij}$ by the definition \eqref{eq:def-U-123}.
The second moment of $U_\ell^{(\ii)}$ can be computed as
\begin{equation*}
\E[(U_\ell^{(\ii)})^2]  = \E \Big[ \Big( \sum_{(i,j)\in J_\ell^{(\ii)}} h_{ij} \Big)^2 \Big] 
= \sum_{(i,j)\in J_\ell^{(\ii)}} \E \kr{h_{ij}^2} + \sum_{\substack{(i,j),(i',j')\in J_\ell^{(\ii)} : \\ i\ne i' \text{ or } j\neq j' }} \E \kr{h_{ij}h_{i'j'}} .
\end{equation*}
Note that conditioning on $z_i$, if $j \neq j'$, then $X_{ij} = \bbone_{\dist(z_i, z_j) \le \tau/2}$ and $X_{ij'} = \bbone_{\dist(z_i, z_{j'}) \le \tau/2}$ are independent. 
Therefore, $h_{ij}$ and $h_{ij'}$ are independent, and similarly for $h_{ij}$ and $h_{i'j}$ if $i \ne i'$. 
It follows that the last expectation is zero in the above equation because $\E[h_{ij}] = 0$.
We then obtain
\begin{equation}
\label{eq:g second order}
\E[(U_\ell^{(\ii)})^2] = \sum_{(i,j)\in J_\ell^{(\ii)}} \E \kr{h_{ij}^2} 
\le 16 n^2 \tau^2 \cdot \tau (1-\tau) \le 16 n^2 \tau^3 ,
\end{equation}
where we have used that $|I_\ell| \le 4 n \tau$ on the event $\cE_1$ and hence $|J^{(\ii)}_\ell| \le (4 n \tau)^2$ by the definition \eqref{eq:bucket index def}, together with the fact $\E[h_{ij}^2] \le \tau (1-\tau)$.
\end{proof}

The following lemma bounds the higher moments of $U^{(\ii)}_\ell$.

\begin{lemma}
\label{lem:u-higher-moment}
Fix $z' \in \cE_1$ and consider the randomness with respect to $z$. Then there is an absolute constant $K>0$ such that for any $1 \le \ii \le 3$, $1 \le \ell \le k$, and $d \ge 3$, we have
$$
\E[|U_\ell^{(\ii)}|^d] \le K^d \Big[ (n^2 \tau^2)^d \land \Big( d^d (n^2 \tau^3)^{d/2} + d^{3d/2} (n \tau^2)^{d/2} + d^{2d}(n^2 \tau^3 \land 1) \Big) \Big] .
$$
\end{lemma}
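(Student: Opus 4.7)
The plan is to establish two upper bounds separately, corresponding to the two sides of the minimum in the statement.

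The first bound $K^d(n^2\tau^2)^d$ is a deterministic estimate. On the event $\cE_1$, Lemma~\ref{lem:good event} gives $|I_\ell|\le 4n\tau$, so \eqref{eq:bucket index def} implies $|J^{\ii}_\ell|\le 16n^2\tau^2$. Since each summand $X'_{ij}(X_{ij}-\tau)$ lies in $[-1,1]$, this forces $|U_\ell^{(\ii)}|\le 16n^2\tau^2$ pointwise, hence $\E[|U_\ell^{(\ii)}|^d]\le (16n^2\tau^2)^d$.

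For the second (probabilistic) bound, the key structural fact is that the kernel $h_{ij}(z_i,z_j):=X'_{ij}\bigl(\bbone_{\dist(z_i,z_j)\le\tau/2}-\tau\bigr)$ is completely degenerate in $(z_i,z_j)$: since $\E[\bbone_{\dist(z_i,z_j)\le\tau/2}\mid z_i]=\tau$, we have $\E[h_{ij}\mid z_i]=\E[h_{ij}\mid z_j]=0$. I would apply the decoupling inequality (Lemma~\ref{lem:decoupling}) to reduce to the decoupled statistic $\tilde U_\ell^{(\ii)}:=\sum_{(i,j)\in J^{\ii}_\ell} X'_{ij}(\bbone_{\dist(z_i,z''_j)\le\tau/2}-\tau)$ with an independent copy $z''$ of $z$, and then invoke Lemma~\ref{lem:hti moments} with parameters
\begin{equation*}
A=\max_{ij}\|h_{ij}\|_\infty\le 1,\qquad B^2\lesssim n\tau^2,\qquad C^2\lesssim n^2\tau^3.
\end{equation*}
The row-wise bound $B^2$ follows from the observation that on $\cE_1$ each fixed $i$ has at most $|I_\ell|+|I_{\ell+1}|\le 8n\tau$ admissible values of $j$ in $J^{\ii}_\ell$, each contributing conditional variance at most $\tau(1-\tau)$; the global $L^2$ bound $C^2\le 16n^2\tau^3$ is the computation of Lemma~\ref{lem:u-2-moment}. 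Integrating the resulting tail bound $\p(|\tilde U_\ell^{(\ii)}|>t)\le K\exp(-K^{-1}\min(t/C,(t/B)^{2/3},(t/A)^{1/2}))$ via $\E[|X|^d]=\int_0^\infty dt^{d-1}\p(|X|>t)\,dt$ yields the three summands $K^d\bigl(d^d(n^2\tau^3)^{d/2}+d^{3d/2}(n\tau^2)^{d/2}+d^{2d}\bigr)$, which decoupling then transfers back to $U_\ell^{(\ii)}$.

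To sharpen $d^{2d}$ to $d^{2d}(n^2\tau^3\wedge 1)$, the case $n^2\tau^3\ge 1$ is automatic. When $n^2\tau^3<1$, I would combine the third-regime tail $\p(|U_\ell^{(\ii)}|>t)\le K\exp(-(t/A)^{1/2}/K)$ with the Markov bound $\p(|U_\ell^{(\ii)}|>t)\le\E[(U_\ell^{(\ii)})^2]/t^2\le 16n^2\tau^3/t^2$ coming from Lemma~\ref{lem:u-2-moment}: truncate at a threshold $T^*$ that balances the two estimates, use the Markov estimate on $[1,T^*]$ and the exponential tail for $t>T^*$. The Markov contribution then carries the desired $n^2\tau^3$ factor attached to a $(T^*)^{d-2}$ polynomial, while the exponential tail beyond $T^*$ decays fast enough to be absorbed. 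The main technical obstacle will be this balancing step, namely ensuring that the polylogarithmic penalty $(\log(1/(n^2\tau^3)))^{O(d)}$ produced by the choice of $T^*$ is dominated by the $d^{2d}$ budget we are willing to pay; once this is handled, taking the minimum with the first deterministic bound yields the claim, and the rest is essentially bookkeeping based on the setup already carried out in Lemmas~\ref{lem:independence of U}--\ref{lem:u-2-moment}.
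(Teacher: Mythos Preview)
Your deterministic bound and the decoupling step are exactly right and match the paper. The gap is in how you extract the factor $n^2\tau^3$ on the $d^{2d}$ term.

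You propose to use the \emph{tail} inequality in Lemma~\ref{lem:hti moments}, integrate it, and then repair the resulting $d^{2d}$ term by balancing the exponential tail against the second-moment Markov bound. You correctly identify the obstacle: the threshold $T^*$ must be at least of order $(\log(1/(n^2\tau^3)))^2$ for the tail beyond $T^*$ to contribute $\lesssim d^{2d}n^2\tau^3$, and then the Markov contribution picks up a factor $(T^*)^{d-2}\asymp(\log(1/(n^2\tau^3)))^{2(d-2)}$. But this factor is \emph{not} dominated by $K^d d^{2d}$ uniformly in the parameters: the lemma must hold for every fixed $d\ge 3$ and every $n,\tau$, and nothing prevents $\log(1/(n^2\tau^3))\gg d$ (indeed the regime $n^2\tau^3\to 0$ is exactly where the sharpening matters). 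So the balancing argument, as written, does not close.

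The paper avoids this entirely by using the \emph{moment} inequality in the first part of Lemma~\ref{lem:hti moments} rather than the tail bound. The crucial difference is that the last term there is $d^{2d}\,\E\max_{(i,j)\in J^{\ii}_\ell}|\tilde h_{ij}|^d$, not $d^{2d}A^d$. Since each $\tilde h_{ij}\in\{0,-\tau,1-\tau\}\cdot X'_{ij}$, the maximum equals $1-\tau$ only on the event that some $\dist(z_i,z''_j)\le\tau/2$ for $(i,j)\in J^{\ii}_\ell$, which has probability at most $|J^{\ii}_\ell|\,\tau\le 16n^2\tau^3$; otherwise the maximum is at most $\tau$. Hence
\[
\E\max_{(i,j)\in J^{\ii}_\ell}|\tilde h_{ij}|^d \le 16n^2\tau^3\,(1-\tau)^d + \tau^d \le 17\,n^2\tau^3
\]
for $d\ge 3$, and trivially $\le 1$. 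This single-line computation delivers the factor $n^2\tau^3\wedge 1$ directly, with no balancing and no dependence of the constant on $n,\tau$. Replacing your tail-integration step with this gives the stated bound immediately.
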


\begin{proof}
First of all, we know that $|J^{(\ii)}_\ell| \le 16 n^2 \tau^2$ as $|I_\ell| \le 4 n \tau$ on the event $\cE_1$. By the definition of $U_\ell^{(\ii)}$ in \eqref{eq:def-U-123}, it is trivial that $|U_\ell^{(\ii)}|^d \le (16 n^2 \tau^2)^d$.

Let $z''$ be an independent copy of $z$. Define a decoupled version of $X'_{ij}(X_{ij} - \tau)$ by $$
\tilde h_{ij} := X'_{ij} (\bbone_{\dist(z_i, z''_j)} - \tau) .
$$
For $d\geq 3$, we apply Lemma~\ref{lem:decoupling} with $f_{ij} = \hti_{ij} \cdot \bbone_{(i,j) \in J^{(\ii)}_\ell}$ and $\Psi(x) = x^d$ to obtain
\begin{equation*}
\E \kr{\abs{U^{(\ii)}_\ell}^d} \leq 12^d \, \E \bigg[ \Big|\sum_{(i,j)\in J_{\ell}^{(\ii)}} \hti_{ij}\Big|^d \bigg] .
\end{equation*}

Furthermore, the moments of the decoupled $U$-statistic can be controlled using Lemma~\ref{lem:hti moments}. 
Let us compute all the quantities involved:
\begin{itemize}
\item $ d^d\pr{\sum_{(i,j)\in J_{\ell}^\ii} \E \hti_{ij}^2 }^{d/2} 
\le d^d \pr{16 n^2 \tau^3}^{d/2}$ by the same argument as that leading to \eqref{eq:g second order}.

\item $ d^{3d/2}\E_{\zb} \max_{i\in I_{\ell}}\pr{\sum_{j\in I_{\ell}} \E_{z''} \hti_{ij}^2}^{d/2} \leq d^{3d/2} \pr{|I_\ell| \tau}^{d/2} \le d^{3d/2} \pr{4 n\tau^2}^{d/2}$ again by the bound $|I_\ell| \le 4 n \tau$ on the event $\cE_1$ and that $\E[\tilde h_{ij}^2] = \tau (1-\tau)$. The same bound obviously holds if $i$ or $j$ belongs to $I_{\ell+1}$ instead of $I_\ell$, or if $i$ and $j$ are swapped.

\item 
$d^{2d} \E \max_{(i,j)\in J_{\ell}^\ii}\abs{\hti_{ij}}^d \le d^{2d} \pr{16 n^2\tau^3 \land 1}$ for the following reason. 
First, $\max_{(i,j)\in J_{\ell}^\ii}\abs{\hti_{ij}}^d = (1-\tau)^d$ if there exists $(i,j) \in J^\ii_\ell$ for which $\dist(z_i,z''_j) \le \tau/2$; this happens with probability at most $|J^\ii_\ell| \tau \le 16 n^2 \tau^3$.
Otherwise, $\max_{(i,j)\in J_{\ell}^\ii}\abs{\hti_{ij}}^d = \tau^d$.
Therefore, $\E \max_{(i,j)\in J_{\ell}^\ii} \abs{\hti_{ij}}^d \leq 16 n^2 \tau^3 (1-\tau)^d + \tau^d \leq 17 n^2 \tau^3$ for $d \ge 3$.
Second, it is trivial that $\E \max_{(i,j)\in J_{\ell}^\ii} \abs{\hti_{ij}}^d \le 1$.

\end{itemize}
Applying Lemma~\ref{lem:hti moments} then yields the desired bound.
\end{proof}

\paragraph{Step 3: Bounding the moment generating function}

We are ready to prove \eqref{eq:goal2}.

\begin{lemma}
\label{lem:bound_goal2}
Let $S$ be defined by \eqref{eq:def-total-S} and $\goodEvent_1$ be the event defined in \eqref{eq:goodEvent_1}.
Assume condition \eqref{eq:cond_detection_lower_bound} and that $n \tau \ge (\log n)^3$. Then for any fixed $z' \in \cE_1$, we have $\E_z \kr{\exp(2\lambda S)} = 1 + o(1)$. 
\end{lemma}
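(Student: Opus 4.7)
The plan is to combine the convexity bound~\eqref{eq:S-123-Jensen} with the independence guaranteed by Lemma~\ref{lem:independence of U}, reducing the problem to bounding the moment generating function of each individual $U_\ell^{(\iota)}$. By~\eqref{eq:S-123-Jensen} it suffices to show $\E_z[\exp(6\lambda S_\iota)] = 1 + o(1)$ for each $\iota \in \{1,2,3\}$. Since $U_1^{(\iota)},\dots,U_k^{(\iota)}$ are independent with $z'$ fixed, we have
\[
\E_z[\exp(6\lambda S_\iota)] \;=\; \prod_{\ell=1}^k \E_z[\exp(6\lambda U_\ell^{(\iota)})] ,
\]
so using $1+x \le e^x$ it is enough to prove $\sum_{\ell=1}^k \bigl( \E_z[\exp(6\lambda U_\ell^{(\iota)})] - 1 \bigr) = o(1)$.

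Next, I would Taylor expand each factor. Because $\E_z[X_{ij}] = \tau$ gives $\E_z[U_\ell^{(\iota)}] = 0$, the linear term vanishes and
\[
\E_z\bigl[\exp(6\lambda U_\ell^{(\iota)}) - 1\bigr] \;\le\; \sum_{d \ge 2} \frac{(6\lambda)^d}{d!} \, \E_z\bigl[|U_\ell^{(\iota)}|^d\bigr] .
\]
The dominant $d=2$ term is controlled by Lemma~\ref{lem:u-2-moment}: each block contributes $O(\lambda^2 n^2 \tau^3)$, so summed over the $k \le 1/\tau$ blocks the contribution is $O((n\tau\lambda)^2)$, which is $o(1)$ by hypothesis~\eqref{eq:cond_detection_lower_bound}.

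For $d \ge 3$ I would invoke Lemma~\ref{lem:u-higher-moment}, which controls the $d$-th moment by the minimum of the trivial bound $(16 n^2\tau^2)^d$ and a sum of three Bernstein-type terms (up to the factor $K^d$). Applying Stirling $d! \ge (d/e)^d$, the first piece $d^d(n^2\tau^3)^{d/2}$ yields a geometric series in $\alpha := C\lambda n\tau^{3/2} = C(n\tau\lambda)\sqrt{\tau} \to 0$, whose total contribution after summing over $k$ blocks is $O((n\tau\lambda)^3 \sqrt{\tau}) = o(1)$. The second piece $d^{3d/2}(n\tau^2)^{d/2}$ and the third piece $d^{2d}(n^2\tau^3\wedge 1)$ produce terms that \emph{grow} in $d$, and here the minimum with the trivial bound $(16n^2\tau^2)^d$ is essential: one splits the $d$-sum at the cutoff where the Bernstein piece equals the trivial piece (roughly $d \sim n\tau^{2/3}$ for piece two), using the Bernstein expression below the cutoff and the trivial bound above. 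The hypothesis $n\tau \ge (\log n)^3$ will be needed to ensure the cutoffs are large enough that the aggregate contribution from all $d \ge 3$ remains $o(1)$ even after multiplication by $k$.

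The main obstacle will be the last step: pieces two and three of the moment bound are not summable as clean geometric series in $d$, so the naive term-by-term estimate fails. The argument must carefully split the $d$-sum at the cutoff where each Bernstein piece is overtaken by the trivial bound and verify the tail contributions. This is precisely where both $n\tau\lambda \to 0$ and $n\tau \ge (\log n)^3$ are used simultaneously; the latter provides enough room for the super-exponential decay needed to overcome the $k \le 1/\tau$ factor from aggregating over blocks.
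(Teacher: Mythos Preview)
Your proposal is correct and follows essentially the same route as the paper: reduce via~\eqref{eq:S-123-Jensen} and the independence from Lemma~\ref{lem:independence of U}, Taylor expand each factor, and control the $d=2$ and $d\ge 3$ moments via Lemmas~\ref{lem:u-2-moment} and~\ref{lem:u-higher-moment} respectively. The only difference is organizational: rather than splitting the $d$-sum at cutoffs, the paper shows the $d$-th root $\bigl(\tfrac{\lambda^d\,\E_z[|U_\ell^{(\iota)}|^d]}{\tau\,d!}\bigr)^{1/d}$ is $o(1)$ uniformly in $d\ge 2$ (using the min with the trivial bound and the hypothesis $n\tau\ge(\log n)^3$), which immediately yields a geometric series and packages your cutoff analysis into a single uniform estimate.
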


\begin{proof}
%
By Lemma~\ref{lem:independence of U} and \eqref{eq:S-123-Jensen}, it suffices to prove that 
$$
\E_z \kr{\exp(6\lambda S_\ii)} = 1+o(1)
$$
for $1 \le \ii \le 3$, where $S_\ii$ is an independent sum of $U$-statistics as defined in \eqref{eq:def-U-123}. 
Note that $\E_z[ U_\ell^{(\ii)} ] = 0$. Since $U^{(\ii)}_1, \dots U^{(\ii)}_k$ are independent and finite, we use the dominated convergence theorem to obtain
\begin{equation*}
\E_z [\exp(6\lambda S_\ii)]
= \prod_{\ell=1}^k \E_z [\exp(6\lambda U_\ell^{(\ii)})]
= \prod_{\ell=1}^k \Bigg( 1 + \sum_{d=2}^\infty \frac{{6^d\lambda^d \E_z [(U_\ell^{(\ii)})^d]}}{d! } \Bigg) .
\end{equation*}
Since $k \le 1/\tau$ by the definition \eqref{eq:def-k-1/tau} and $1+x \le e^x$ for $x \in \R$, it follows that
\begin{align*}
\E_z [\exp(6\lambda S_\ii)] 
\leq \pr{1 + \max_{1\leq \ell \leq k}  \sum_{d=2}^\infty \frac{{6^d\lambda^d \E_z [|U_\ell^{(\ii)}|^d] }}{d! } }^{1/\tau} 
\leq \exp\br{\max_{1\leq \ell \leq k}  \sum_{d=2}^\infty \frac{{6^d\lambda^d \E_z [|U_\ell^{(\ii)}|^d]  }}{\tau d! }  } .
\end{align*}
It remains to show that the exponent is $o(1)$,
which is true if 
\begin{equation*}
\max_{\substack{1\leq \ell \leq k,\\ d\geq 2}} \pr{\frac{{\lambda^d \E_z [|U_\ell^{(\ii)}|^d]}}{\tau d! }}^{1/d} = o(1) .
\end{equation*}

Fix any $\ell \in [k]$. 
For $d = 2$, by Lemma~\ref{lem:u-2-moment}, we indeed have $\frac{\lambda^2}{\tau} \E[(U_\ell^{(\ii)})^2] \le 16 \lambda^2 n^2 \tau^2 = o(1)$ under condition \eqref{eq:cond_detection_lower_bound}.
For $d \ge 3$, by Lemma~\ref{lem:u-higher-moment} and Stirling's approximation, we have
\begin{align*}
\pr{\frac{{\lambda^d \E_z [|U_\ell^{(\ii)}|^d]}}{\tau d! }}^{1/d}
\le \frac{e \lambda K}{\tau^{1/d} d} \Big[ (n^2 \tau^2) \land \Big( d (n^2 \tau^3)^{1/2} + d^{3/2} (n \tau^2)^{1/2} + d^{2}(n^2 \tau^3 \land 1)^{1/d} \Big) \Big] .
\end{align*}
By condition \eqref{eq:cond_detection_lower_bound}, we have $\lambda = o(\frac{1}{n\tau})$. Hence, it suffices to verify
\begin{align*}
&\max_{d \ge 2} \frac{1}{d n \tau^{1+1/d}} \Big[ (n^2 \tau^2) \land \Big( d (n^2 \tau^3)^{1/2} + d^{3/2} (n \tau^2)^{1/2} + d^{2}(n^2 \tau^3 \land 1)^{1/d} \Big) \Big] \\
&= \max_{d \ge 2} \Big[ (n \tau^{1-1/d} d^{-1}) \land \Big( \tau^{1/2-1/d} + d^{1/2} n^{-1/2} \tau^{-1/d} + (d n^{2/d-1} \tau^{2/d-1}) \land (d n^{-1} \tau^{-1-1/d}) \Big) \Big] 
= O(1) .
\end{align*}
Thanks to the first term, we can assume $n \tau^{1-1/d} d^{-1} \ge 1$, i.e., $d \le n \tau^{1-1/d}$. 
Moreover, we have:
\begin{itemize}
\item $\tau^{1/2-1/d} \leq 1$.

\item $d^{1/2}n^{-1/2}\tau^{-1/d} \le \tau^{1/2 - 3/(2d)} \leq 1$ since $3 \le d \le n \tau^{1-1/d}$.

\item
$(d n^{2/d-1} \tau^{2/d-1}) \land (d n^{-1} \tau^{-1-1/d}) \le (d (n\tau)^{-1/3}) \land \tau^{-2/d} \le \frac{d}{\log n} \land n^{2/d}$ since $3 \le d \le n \tau^{1-1/d}$ and $n \tau \ge (\log n)^3$. 
We have either $\frac{d}{\log n} \le 1$, or $d \ge \log n$ in which case $n^{2/d} \le e^2$.
\end{itemize}
The conclusion follows.
\end{proof}

\section{Lower bound for recovery}
\label{sec:lower-bound-recovery}

We now consider the recovery problem from Definition~\ref{def:prob-recover}.
Define the minimum mean squared error (MMSE) as 
\begin{equation}
\mmse := \min_{\hat X = \hat X(A)} \E_{(A,X) \sim \cP} [R(\hat X, X)] , \quad \text{ where } R(\hat X, X) := \sum_{1 \le i < j \le n} (\hat X_{ij} - X_{ij})^2 ,
\label{def:bayes estimator, mmse}
\end{equation}
where the minimum is taken over all estimators $\hat X$ measurable with respect to the observation $A$.
To establish an impossibility result for recovery, it suffices to prove a lower bound on the MMSE.

\begin{theorem}
\label{thm:recovery lower bound}
Consider the model $\cP$ from Definition~\ref{def:mod-p} with parameters $n, \tau, p, q$.
Define $\lambda := \frac{(p-q)^2}{r(1-r)}$.
Suppose that $\frac{\log n}{n} \le r \le \frac 12$, $(\log n)^3 \le n \tau \le o(n)$, $n \tau \lambda \to 0$, and $n \lambda \tau^{3/2} \log n \to 0$.
The MMSE defined in \eqref{def:bayes estimator, mmse} satisfies 
\begin{equation*}
\mmse \geq \binom{n}{2}\tau(1-\tau) - o(n^2\tau) .
\end{equation*}
\end{theorem}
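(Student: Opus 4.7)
The plan is to adapt the I-MMSE-based strategy used for the stochastic block model \cite{deshpandeAsymptoticMutualInformation2016} and random graph matching \cite{wuSettlingSharpReconstruction2022} to the planted dense cycle. Since each $X_{ij}$ is marginally $\Bern(\tau)$, I would first write
\begin{equation*}
\mmse = \binom{n}{2}\tau(1-\tau) - \sum_{1 \le i < j \le n} \mathrm{Var}_{\cP}\bigl(\E[X_{ij}\mid A]\bigr),
\end{equation*}
reducing the claim to showing $\sum_{i<j} \mathrm{Var}_{\cP}(\E[X_{ij}\mid A]) = o(n^2\tau)$.

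To produce such a bound, I would introduce an auxiliary Gaussian side channel: for each $\gamma \ge 0$, let $Y(\gamma)_{ij} := \sqrt{\gamma}\,X_{ij} + Z_{ij}$ where $Z_{ij}$ are i.i.d.\ standard normal and independent of $(A,X)$, and set $\mathrm{M}(\gamma) := \sum_{i<j}\E[(X_{ij} - \E[X_{ij} \mid A, Y(\gamma)])^2]$. The Gaussian I-MMSE identity \cite{guo2005mutual} gives $\tfrac{d}{d\gamma} I(X; A, Y(\gamma)) = \tfrac{1}{2}\mathrm{M}(\gamma)$, and since $\mathrm{M}(\gamma)$ is non-increasing in $\gamma$ with $\mathrm{M}(0) = \mmse$, integrating on $[0,\gamma^*]$ yields the key inequality
\begin{equation*}
\mmse \ge \frac{2}{\gamma^*}\bigl[I(X; A, Y(\gamma^*)) - I(X; A)\bigr]
\end{equation*}
for any $\gamma^* > 0$.

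The next step is to bound both mutual informations. The identity $I(X;A) \le \E_X[\mathrm{KL}(\cP(A\mid X)\,\|\,\cQ)]$ reduces to an edge-wise calculation giving $I(X;A) = O(n^2\tau\lambda) = o(n^2\tau)$ under $n\tau\lambda \to 0$. For the companion lower bound on $I(X; A, Y(\gamma^*))$, I would exploit the common-cause structure $A \leftarrow X \to Y(\gamma^*)$: conditional independence $A \perp Y(\gamma^*) \mid X$ yields $I(A; Y(\gamma^*)) \le I(A; X)$ by data processing, whence
\begin{equation*}
I(X; A, Y(\gamma^*)) - I(X; A) = I(X; Y(\gamma^*) \mid A) \ge I(X; Y(\gamma^*)) - I(X; A).
\end{equation*}
A second application of I-MMSE, now to the Gaussian-only channel, together with the monotonicity of the corresponding $\mathrm{M}_Y(\gamma)$, then gives $I(X; Y(\gamma^*)) \ge \tfrac{\gamma^*}{2}\mathrm{M}_Y(\gamma^*)$, where $\mathrm{M}_Y(\gamma)$ is the MMSE when only $Y(\gamma)$ is observed.

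Choosing $\gamma^*$ to balance the two error contributions---for example $\gamma^* = \sqrt{\lambda}$, which satisfies $\lambda \ll \gamma^* \to 0$ under $n\tau\lambda \to 0$---and combining the pieces should produce $\mmse \ge \binom{n}{2}\tau(1-\tau) - o(n^2\tau)$. The hard part will be verifying that $\mathrm{M}_Y(\gamma^*) \ge \binom{n}{2}\tau(1-\tau) - o(n^2\tau)$, i.e., that the edge-wise Gaussian channel alone cannot extract appreciable information about $X$. Although the channel $X \to Y(\gamma)$ is conditionally independent across edges, the entries of $X$ are strongly dependent (since $X$ is determined by only $n$ latent positions on the circle), so a naive Pinsker-type bound on $\sum \mathrm{Var}(\E[X_{ij}\mid Y(\gamma^*)])$ loses a $\log(1/\tau)$ factor. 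I expect this obstacle to be resolvable by a careful block-partitioning of vertices into essentially independent groups---in the spirit of Lemma~\ref{lem:independence of U} from Section~\ref{sec:concentration-u-statistics-three-steps}---combined with the conditional second moment technology already developed for the detection lower bound.
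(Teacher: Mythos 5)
Your reduction to $\sum_{e}\Var(\E[X_e\mid A]) = o(n^2\tau)$, the integrated I-MMSE inequality $\mmse \ge \frac{2}{\gamma^*}[I(X;A,Y(\gamma^*))-I(X;A)]$, and the bound $I(X;A)\le \E_X[\KL(\cP_{A\mid X}\|\cQ)] = \binom{n}{2}\tau(1-\tau)\lambda$ are all correct, and the Gaussian side channel is a legitimate alternative to what the paper does (the paper instead interpolates within the Bernoulli family via the model $\cP_\theta$ of Definition~\ref{def:interpolation model}, uses the area theorem of Lemma~\ref{lem:immse} plus a leave-one-edge-out argument in Lemmas~\ref{lem:mutual info upper bound 1}--\ref{lem:mutual info upper bound 2}, and monotonicity of $\mmse(\theta)$). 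However, there are two genuine gaps. First, the choice $\gamma^*=\sqrt{\lambda}$ does not work: the natural threshold for the edge-wise Gaussian channel $Y(\gamma)=\sqrt{\gamma}X+Z$ sits at $n\tau\gamma \asymp 1$ (the second-moment computation for this channel produces exactly the exponent of Lemma~\ref{lem:second moment conditioned on z} with $\lambda$ replaced by $\gamma$), and $n\tau\sqrt{\lambda}\to 0$ is \emph{not} implied by $n\tau\lambda\to 0$; e.g.\ $\lambda=(n\tau\log n)^{-1}$ gives $n\tau\sqrt{\lambda}=\sqrt{n\tau/\log n}\to\infty$, in which case $M_Y(\gamma^*)$ is not close to trivial and your final inequality yields nothing. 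You need $\lambda\ll\gamma^*\ll(n\tau)^{-1}$, e.g.\ $\gamma^*=\sqrt{\lambda/(n\tau)}$, which the hypothesis $n\tau\lambda\to 0$ does permit.

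Second, and more fundamentally, the step you defer as ``the hard part''---showing $M_Y(\gamma^*)\ge\binom{n}{2}\tau(1-\tau)-o(n^2\tau)$---is an MMSE lower bound of exactly the same type as the theorem itself, and small mutual information $I(X;Y(\gamma^*))$ does not by itself imply it: the conversion from an information bound to an MMSE bound is precisely where the difficulty lives. The mechanism you gesture at (block-partitioning plus the conditional second moment method) only delivers a detection-type statement ($\TV$ or $\KL$ small), not the MMSE statement. What is actually needed---and what the paper supplies---is a second application of I-MMSE at a \emph{boosted} signal level (the paper uses $\pi=2p-r$ in Lemma~\ref{lem:mmse-trivial-interpolation}) combined with monotonicity of the MMSE in the SNR (Lemma~\ref{lem:mmse-monotone}) and the vanishing of the KL divergence at that boosted level (Lemma~\ref{lem:kl-go-to-zero}, which is where the conditional second moment method and the extra hypothesis $n\lambda\tau^{3/2}\log n\to 0$ enter). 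In your setup this means: use monotonicity of $\gamma\mapsto\sum_e\Var(\E[X_e\mid Y(\gamma)])$ to bound $M_Y(\gamma^*)$ via $\frac{1}{\gamma^*}\int_{\gamma^*}^{2\gamma^*}$, relate that integral to $\KL$ of the law of $Y(2\gamma^*)$ against pure noise, and prove that this $\KL$ vanishes, which requires $n\tau\gamma^*\to 0$ and the full conditional-second-moment analysis of Section~\ref{sec:concentration-u-statistics-three-steps} at the level $2\gamma^*$. Until that chain is written out, the proposal defers rather than resolves the core difficulty.
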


To prove the negative result for recovery, we relate the MMSE to a few information measures between $A$ and $X$, including the mutual information, the conditional entropy, and the Kullback--Leibler (KL) divergence. 
This approach has been successful at establishing negative results for recovery problems on random graphs \cite{deshpandeAsymptoticMutualInformation2016,wuSettlingSharpReconstruction2022}. Our proof follows the same line.

\subsection{Information measures}
Let 
$\KL(\cP\|\cQ)=\int\cP\log\frac{\cP}{\cQ}$
denote the KL divergence between distributions $\cP$ and $\cQ$.
For random variables $A$ and $X$, denoting the joint distribution by $\cP_{(A,X)}$ and the product distribution by $\cP_A \otimes \cP_X$, the mutual information between $A$ and $X$ is defined to be
$$
I(A,X) := \KL(\cP_{(A,X)} \| \cP_A \otimes \cP_X).
$$
Moreover, the conditional entropy of $X$ given $A$ is defined by 
$$
H(X \cond A) := - \int \cP(A,X) \log \cP(X \cond A).
$$
These quantities measure the amount of information about $X$ contained in the observation $A$.


\begin{definition}[Model $\cP_\theta$]\label{def:interpolation model}
Fix parameters $0 < r \le \theta \le 1$ and $0 < \tau < 1/2$. 
Suppose that $r > \tau \theta$, and let $q := \frac{r-\tau \theta}{1-\tau}$. 
Then we define $\cP_\theta$ in the same way as the model $\cP$ in Definition~\ref{def:mod-p} but with $p$ replaced by $\theta$.
Let $X$ and $A$ be also from Definition~\ref{def:mod-p}. We write $(A,X) \sim \cP_\theta$ and $A \sim (\cP_\theta)_A$.
\end{definition}

\noindent
With the above definition, note that $\cP_p = \cP$ and $(\cP_r)_A = \cQ$ for $\cP$ and $\cQ$ given in Definitions~\ref{def:mod-p} and~\ref{def:mod-q} respectively, so $\cP_\theta$ interpolates between $\cP$ and $\cQ$.
Moreover, for any $\theta \in [r,p]$, we have marginally $A_e \sim \Bern(r)$ for every $e \in \binom{[n]}{2}$ because of the relation $q = \frac{r-\tau \theta}{1-\tau}$, i.e., $r = \theta \tau + q (1-\tau)$.
%
%
%

We use $H_\theta(X\cond A)$ to denote the conditional entropy of $X$ given $A$ the under model $\cP_\theta$, and use $I(A,X)$ without a subscript to denote the mutual information under model $\cP = \cP_p$.
Note that for $\theta = r$, $A$ and $X$ are independent under $\cP_r = \cQ$, so $H_r(X \cond A)$ is the entropy of $X$.
It is straightforward to obtain the following standard facts:
\begin{equation}\label{eq:mutal info and conditional entropy}
-I(A,X) = H_p(X\cond A) - H_r (X\cond A) = \int_r^p \frac{\rmd }{\rmd \theta} H_{\theta} (X\cond A)\rmd \theta ,
\end{equation}
%
\begin{equation} \label{eq:mutual info lower bound}
I(A,X) = \E_X [\KL(\cP_{A\cond X} \| \cQ)] - \KL(\cP_A \| \cQ).
\end{equation}


For a pair of indices $e = (i,j) \in \binom{[n]}{2}$, let $A_{-e}$ denote $A$ without its entry $A_e$.
The following result is shown by \cite{measson2009generalized,deshpandeAsymptoticMutualInformation2016} (cf.\ Lemma~7.1 of \cite{deshpandeAsymptoticMutualInformation2016} and its proof).

\begin{lemma}[Lemma~7.1 of \cite{deshpandeAsymptoticMutualInformation2016}] \label{lem:immse}
For $e \in \binom{[n]}{2}$ and $x_e, y_e \in \{0,1\}$, let 
$$
p_\theta(y_e \cond x_e) : = \cP_\theta \br{A_e = y_e \cond X_e = x_e}.
$$
We have
\begin{equation*}
\frac{\rmd}{\rmd \theta} H_\theta(X\cond A) 
= \sum_{e\in \binom{[n]}{2}} \pr{ \frac{\rmd }{\rmd \theta} H_{\theta} (A_e \cond X_e) + \rone_e } ,
\end{equation*}
where 
\begin{align*}
\rone_e := \sum_{x_e, y_e \in \{0,1\}} \frac{\rmd p_\theta(y_e \cond x_e)}{\rmd \theta} \E \Big[ \cP_\theta\br{X_e=x_e \cond A_{-e}} \log \Big( \sum_{x_e' \in \{0,1\}} p_\theta(y_e \cond x_e') \, \cP_\theta\{X_e=x_e'\cond A_{-e}\} \Big) \Big].
\end{align*}
\end{lemma}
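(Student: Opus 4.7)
The plan is to decompose $H_\theta(X\mid A)$ into pieces that isolate the $\theta$-dependence and then differentiate term by term, exploiting the fact that under $\cP_\theta$ the prior on $X$ (induced by the i.i.d.\ uniform latent coordinates) does not depend on $\theta$, while the conditional law factorizes as $\cP_\theta(A\mid X) = \prod_e p_\theta(A_e\mid X_e)$. Writing $H_\theta(X\mid A) = H_\theta(X,A) - H_\theta(A) = H(X) + H_\theta(A\mid X) - H_\theta(A)$, only the last two terms depend on $\theta$, so
\begin{equation*}
\frac{\rmd}{\rmd \theta}H_\theta(X\mid A) = \frac{\rmd}{\rmd \theta}H_\theta(A\mid X) - \frac{\rmd}{\rmd \theta}H_\theta(A) .
\end{equation*}

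For the first piece, conditional independence of the edges of $A$ given $X$ immediately yields $H_\theta(A\mid X) = \sum_e H_\theta(A_e\mid X_e)$, and differentiating produces the first term on the right-hand side of the claimed identity. The substance of the lemma therefore lies in showing that $-\frac{\rmd}{\rmd\theta} H_\theta(A) = \sum_e \rone_e$.

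To handle $-\frac{\rmd}{\rmd\theta}H_\theta(A)$, I would start from the identity $-\frac{\rmd}{\rmd\theta}H_\theta(A) = \sum_A \frac{\rmd \cP_\theta(A)}{\rmd\theta}\log \cP_\theta(A)$, where the normalization term $\sum_A \frac{\rmd \cP_\theta(A)}{\rmd\theta}$ vanishes because probabilities sum to one. Using $\cP_\theta(A) = \sum_X P(X)\prod_{e'} p_\theta(A_{e'}\mid X_{e'})$ and the product rule, one obtains
\begin{equation*}
\frac{\rmd \cP_\theta(A)}{\rmd\theta} = \sum_e \sum_{x_e}\frac{\rmd p_\theta(A_e\mid x_e)}{\rmd\theta}\, \cP_\theta(X_e = x_e, A_{-e}) ,
\end{equation*}
where I have used conditional independence of $A_e$ and $A_{-e}$ given $X$ to collapse the sum over $X_{-e}$. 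Plugging this in and writing $\cP_\theta(X_e=x_e,A_{-e}) = \cP_\theta(A_{-e})\,\cP_\theta(X_e=x_e\mid A_{-e})$ and $\log \cP_\theta(A) = \log \cP_\theta(A_e\mid A_{-e}) + \log \cP_\theta(A_{-e})$, the expression splits into two parts. Recognizing that $\cP_\theta(A_e = y_e\mid A_{-e}) = \sum_{x_e'} p_\theta(y_e\mid x_e')\cP_\theta(X_e = x_e'\mid A_{-e})$, the part containing $\log \cP_\theta(A_e\mid A_{-e})$ is exactly $\sum_e \rone_e$ after taking expectation over $A_{-e}\sim \cP_\theta$.

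The only step requiring real care is showing that the contribution from $\log \cP_\theta(A_{-e})$ vanishes; this is the main (mild) obstacle. It does so because $\log \cP_\theta(A_{-e})$ does not depend on $y_e$, so that contribution factors out a sum $\sum_{y_e} \frac{\rmd p_\theta(y_e\mid x_e)}{\rmd\theta} = \frac{\rmd}{\rmd\theta}\sum_{y_e} p_\theta(y_e\mid x_e) = \frac{\rmd}{\rmd\theta} 1 = 0$. Combining the two pieces gives the claimed decomposition. Everything else is bookkeeping, and no probabilistic or analytic subtlety enters beyond justifying the interchange of differentiation and the finite sums, which is immediate since $A$ and $X$ take values in finite sets.
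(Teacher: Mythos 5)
Your proof is correct. Note that the paper does not prove this lemma at all --- it is quoted verbatim from Lemma~7.1 of the cited reference --- so there is no in-paper argument to compare against; your derivation (reduce to $\frac{\rmd}{\rmd\theta}H_\theta(A\mid X)-\frac{\rmd}{\rmd\theta}H_\theta(A)$, expand $\frac{\rmd}{\rmd\theta}\cP_\theta(A)$ edge by edge via the product rule, use $\cP_\theta(A_e=y_e\mid X_e=x_e',A_{-e})=p_\theta(y_e\mid x_e')$ to identify $\sum_e\rone_e$, and kill the $\log\cP_\theta(A_{-e})$ contribution via $\sum_{y_e}\frac{\rmd}{\rmd\theta}p_\theta(y_e\mid x_e)=0$) is exactly the standard route taken in that reference. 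Since $X$ has finite support and all the conditional probabilities lie in $(0,1)$ for $\theta$ in the interior of the relevant range, the interchange of differentiation with the finite sums is indeed immediate, as you say.
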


Applying the above results, we now prove the following.

\begin{lemma} \label{lem:kl-integral}
Let $\rone_e$ be defined in Lemma~\ref{lem:immse}.
We have 
$$
\KL(\cP_A \| \cQ) = \sum_{e\in \binom{[n]}{2}} \int_r^p \rone_e \, \rmd \theta .
$$
\end{lemma}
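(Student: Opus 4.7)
The plan is to derive the identity by combining the chain-rule decomposition \eqref{eq:mutal info and conditional entropy} with the edge-wise derivative formula from Lemma~\ref{lem:immse}, and then to observe that the "per-edge conditional entropy" contributions cancel exactly against $\E_X[\KL(\cP_{A\mid X}\|\cQ)]$ coming from \eqref{eq:mutual info lower bound}.

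First, I would integrate the conclusion of Lemma~\ref{lem:immse} in $\theta$ from $r$ to $p$. The first term yields $\sum_e [H_p(A_e\mid X_e) - H_r(A_e\mid X_e)]$, and the second term gives $\sum_e \int_r^p \rone_e\,d\theta$. Combining with \eqref{eq:mutal info and conditional entropy}, this produces the identity
\[
-I(A,X) \;=\; \sum_{e\in\binom{[n]}{2}} \bigl[H_p(A_e\mid X_e) - H_r(A_e\mid X_e)\bigr] \;+\; \sum_{e\in\binom{[n]}{2}} \int_r^p \rone_e\,d\theta.
\]

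Next, I would compute $\E_X[\KL(\cP_{A\mid X}\|\cQ)]$ directly from the model definition. Under $\cP=\cP_p$, conditional on $X$ the entries of $A$ are independent with $A_e\sim\Bern(p)$ if $X_e=1$ and $A_e\sim\Bern(q)$ otherwise, while $\cQ$ is a product of $\Bern(r)$. Since $\E[X_e]=\tau$, the expected KL divergence factors as $\sum_e[\tau\,\KL(\Bern(p)\|\Bern(r)) + (1-\tau)\,\KL(\Bern(q)\|\Bern(r))]$. Expanding each term as $a\log a + (1-a)\log(1-a) - a\log r - (1-a)\log(1-r)$ and using the crucial identity $\tau p + (1-\tau) q = r$ from \eqref{eq:parameters-all}, the $\log r$ and $\log(1-r)$ pieces combine to $-r\log r - (1-r)\log(1-r) = H(r) = H_r(A_e\mid X_e)$, while the $a\log a + (1-a)\log(1-a)$ pieces combine to $-[\tau H(p)+(1-\tau)H(q)] = -H_p(A_e\mid X_e)$. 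Therefore
\[
\E_X\bigl[\KL(\cP_{A\mid X}\|\cQ)\bigr] \;=\; \sum_{e\in\binom{[n]}{2}} \bigl[H_r(A_e\mid X_e) - H_p(A_e\mid X_e)\bigr].
\]

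Finally, I rearrange \eqref{eq:mutual info lower bound} as $\KL(\cP_A\|\cQ) = \E_X[\KL(\cP_{A\mid X}\|\cQ)] - I(A,X)$ and substitute the two expressions above. The per-edge entropy differences cancel exactly in sign, leaving the claimed identity. The only delicate point is the cancellation in the second step, which uses the interpolation constraint that the marginal law of each $A_e$ is $\Bern(r)$ under every $\cP_\theta$ with $\theta\in[r,p]$; beyond this, the argument is a bookkeeping exercise combining two standard information-theoretic identities with Lemma~\ref{lem:immse}.
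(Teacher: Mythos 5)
Your proposal is correct and follows essentially the same route as the paper: integrate Lemma~\ref{lem:immse} over $\theta\in[r,p]$, combine with \eqref{eq:mutal info and conditional entropy} and \eqref{eq:mutual info lower bound}, and cancel the per-edge entropy differences against $\E_X[\KL(\cP_{A\mid X}\|\cQ)]$ using the marginal matching $r=\tau p+(1-\tau)q$. The only cosmetic difference is that you verify the cancellation by expanding the Bernoulli KL divergences in closed form, whereas the paper manipulates the joint probabilities $\cP(A_e=y_e,X_e=x_e)$ directly; both rest on the same identity.
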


\begin{proof}
By \eqref{eq:mutal info and conditional entropy} and Lemma~\ref{lem:immse}, we obtain
\begin{align}
-I(A,X) 
&= \int_r^p \sum_{e\in \binom{[n]}{2}} \pr{ \frac{\rmd }{\rmd \theta} H_{\theta} (A_e \cond X_e) + \rone_e } \, \rmd \theta \notag \\
&= \sum_{e\in \binom{[n]}{2}} \pr{ \pr{H_{p} (A_e \cond X_e)- H_{r} (A_e \cond X_e)} + \int_r^p \rone_e \, \rmd \theta } . \label{eq:negative-mutual-info}
\end{align}
It remains to study the first term. Under model $\cP_r = \cQ$, we have that $A_e$ and $X_e$ are independent, so $H_r(A_e \cond X_e)$ is the entropy of $A_e$.
It follows that
\begin{align*}
&-\pr{H_{p} (A_e \cond X_e)- H_{r} (A_e \cond X_e)}\\ 
&= \sum_{x_e, y_e \in \{0,1\}} \cP\pr{A_e = y_e, \, X_e = x_e} \log \cP\pr{A_e = y_e\cond X_e} - \sum_{y_e \in \{0,1\}} \cQ\pr{A_e = y_e} \log \cQ(A_e = y_e)\\
&= 
\sum_{x_e, y_e \in \{0,1\}} \cP\pr{A_e = y_e, \, X_e = x_e} 
\log \frac{\cP\pr{A_e = y_e\cond X_e}}{\cQ(A_e = y_e)} ,
\end{align*}
where the last step holds because $\sum_{x_e \in \{0,1\}} \cP\pr{A_e = y_e, \, X_e = x_e} = \cP\pr{A_e = y_e} = r = \cQ\pr{A_e = y_e}$.
As a result,
\begin{align*}
- \sum_{e\in \binom{[n]}{2}} \pr{H_{p} (A_e \cond X_e)- H_{r} (A_e \cond X_e)} 
&= \sum_{e\in \binom{[n]}{2}} \E_{X_e} \sum_{y_e \in \{0,1\}} \cP\pr{A_e = y_e \cond X_e} 
\log \frac{\cP\pr{A_e = y_e\cond X_e}}{\cQ(A_e = y_e)} \\
&= \sum_{e\in \binom{[n]}{2}} \E_{X_e}[ \KL( \cP_{A_e|X_e} \| \cQ_{A_e} ) ] .
\end{align*}
Note that $A_e \cond X_e = A_e \cond X$ and the entries $(A_e)_{e \in \binom{[n]}{2}}$ are independent conditional on $X$ under $\cP$.  Hence, the above sum is equal to $\E_X[\KL(\cP_{A\cond X} \| \cQ)]$. We then obtain from \eqref{eq:negative-mutual-info} that
$$
I(A,X) = \E_X[\KL(\cP_{A\cond X} \| \cQ)] - \sum_{e\in \binom{[n]}{2}} \int_r^p \rone_e \, \rmd \theta .
$$
This combined with \eqref{eq:mutual info lower bound} completes the proof.
\end{proof}

\subsection{Relation to the MMSE}
Fix $\theta \in [r,p]$ and consider the model $\cP_\theta$ in Definition~\ref{def:interpolation model}.
We relate the quantity $\sum_{e\in \binom{[n]}{2}} \rone_e$ that appears in Lemma~\ref{lem:kl-integral} to the MMSE defined as (cf.\ \eqref{def:bayes estimator, mmse} which corresponds to $\theta = p$)
\begin{equation}
\mmse(\theta) := \min_{\hat X = \hat X(A)} \E_\theta[R(\hat X, X)] ,
\label{eq:mmse-theta}
\end{equation}
where $(A,X) \sim \cP_\theta$ and $\hat X$ is an estimator of $X$ that is measurable with respect to $A$.

\begin{lemma} \label{lem:mutual info upper bound 1}
For $e \in \binom{[n]}{2}$, define $\rone_e$ as in Lemma~\ref{lem:immse} and define
$$
\Xhat_e := \E_\theta[X_e \cond A_{-e}].
$$
Let $\Var_\theta(\Xhat_e)$ denote the variance of $\Xhat_e$ under $\cP_\theta$. 
We have
\begin{equation*}
\Var_\theta(\Xhat_e) \le \frac{(1-\tau)^2 \theta}{\theta-r} \, \rone_e .
\end{equation*}
\end{lemma}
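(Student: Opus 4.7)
The plan is to expand the definition of $\rone_e$ explicitly, recognize the inner logarithm as $\log \cP_\theta\{A_e = y_e \cond A_{-e}\}$, and then exploit the monotone dependence of this log-likelihood ratio on $\Xhat_e$ to extract a term proportional to $\Var_\theta(\Xhat_e)$ via a mean-value estimate.

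First I would compute $\frac{\rmd p_\theta(y_e \cond x_e)}{\rmd \theta}$. By Definition~\ref{def:interpolation model}, $p_\theta(1 \cond 1) = \theta$ and $p_\theta(1 \cond 0) = q = (r - \tau\theta)/(1-\tau)$, so the four derivatives are $\pm 1$ (for $x_e = 1$) and $\mp \tau/(1-\tau)$ (for $x_e = 0$). Next, since $A_e$ is conditionally independent of $A_{-e}$ given $X_e$ under $\cP_\theta$, the inner sum in Lemma~\ref{lem:immse} simplifies:
\[
\sum_{x_e' \in \{0,1\}} p_\theta(y_e \cond x_e') \, \cP_\theta\{X_e = x_e' \cond A_{-e}\} = \cP_\theta\{A_e = y_e \cond A_{-e}\},
\]
which equals $q + M \Xhat_e$ when $y_e = 1$ and $(1-q) - M \Xhat_e$ when $y_e = 0$, where $M := \theta - q = (\theta - r)/(1-\tau)$. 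Substituting these into the definition of $\rone_e$, collecting terms, and using the identity $\Xhat_e - \frac{\tau}{1-\tau}(1 - \Xhat_e) = (\Xhat_e - \tau)/(1-\tau)$, I expect the computation to collapse to
\[
\rone_e = \frac{1}{1-\tau} \, \E_\theta\!\left[ (\Xhat_e - \tau) \, f(\Xhat_e) \right], \qquad f(x) := \log \frac{q + Mx}{(1-q) - Mx}.
\]

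To finish, I would compute $f'(x) = \frac{M}{q + Mx} + \frac{M}{(1-q) - Mx}$ and observe that $f'(x) \geq M/\theta$ for $x \in [0,1]$, since $q + Mx \leq q + M = \theta$ and the second summand is positive. The mean value theorem then gives, pointwise,
\[
(f(\Xhat_e) - f(\tau)) \, (\Xhat_e - \tau) \geq \frac{M}{\theta} \, (\Xhat_e - \tau)^2.
\]
Taking expectations and using $\E_\theta[\Xhat_e] = \E_\theta[X_e] = \tau$ (so that the constant $f(\tau)$ contributes zero) yields
\[
\rone_e \geq \frac{M}{(1-\tau) \, \theta} \, \Var_\theta(\Xhat_e) = \frac{\theta - r}{(1-\tau)^2 \, \theta} \, \Var_\theta(\Xhat_e),
\]
which rearranges to the claimed bound; the $\theta = r$ case is trivial as the right-hand side is infinite. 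The only mildly delicate step is the algebraic bookkeeping that reduces $\rone_e$ to a covariance between $\Xhat_e$ and a monotone function of $\Xhat_e$; once that reduction is in hand, the mean-value estimate is immediate.
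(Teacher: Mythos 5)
Your proposal is correct and follows essentially the same route as the paper: both reduce $\rone_e$ to $\frac{1}{1-\tau}\E[(\Xhat_e-\tau)\,g(\Xhat_e)]$ for a monotone function $g$ (your $f(\Xhat_e)=\log\frac{\dot X_e}{1-\dot X_e}$ is exactly the paper's $-h'(\dot X_e)$ with $\dot X_e = q + M\Xhat_e$), and both then lower-bound the derivative of that function by $\frac{M}{\theta}$ (equivalently $\xi(1-\xi)\le\theta$ in the paper's Taylor-expansion formulation) to extract $\Var_\theta(\Xhat_e)$. Your pointwise mean-value estimate followed by taking expectations is, if anything, a slightly cleaner way to handle the random intermediate point than the paper's phrasing, but the argument is the same.
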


\begin{proof}
Recall the model $\cP_\theta$ in Definition~\ref{def:interpolation model} and the notation $p_\theta(y_e \cond x_e)$ in Lemma~\ref{lem:immse}. We have
$$
p_\theta(y_e \cond x_e) =
\begin{cases}
\theta & \text{if } x_e = y_e = 1, \\
1-\theta, & \text{if } x_e = 1, \, y_e = 0, \\
q = \frac{r-\tau\theta}{1-\tau}, & \text{if } x_e = 0, \, y_e = 1, \\
1-q  & \text{if } x_e = y_e = 0,
\end{cases}
$$
so
\begin{equation*}
\frac{\rmd p_\theta(y_e\cond x_e)}{\rmd \theta} = 
\begin{cases}
1, & \text{if } x_e = y_e = 1, \\
-1, & \text{if } x_e = 1, \, y_e = 0, \\
-\frac{\tau}{1-\tau} & \text{if } x_e = 0, \, y_e = 1, \\
\frac{\tau}{1-\tau} & \text{if } x_e = y_e = 0. 
\end{cases}
\end{equation*}
Let $h(x) := -x\log x-(1-x)\log(1-x)$ and then $h'(x) = \log\frac{1-x}{x}$.
Moreover, define
\begin{equation*}
\dot X_e := \sum_{x_e' \in \{0,1\}} p_\theta(1 \cond x_e') \, \cP_\theta\{X_e=x_e'\cond A_{-e}\} = \theta \Xhat_e + \frac{r-\tau\theta}{1-\tau}(1-\Xhat_e)
\end{equation*}
and then
$$
\sum_{x_e' \in \{0,1\}} p_\theta(0 \cond x_e') \, \cP_\theta\{X_e=x_e'\cond A_{-e}\} 
= 1 - \dot X_e .
$$

Therefore, for each $x_e \in \{0,1\}$, we have 
\begin{align*}
& \sum_{y_e \in \{0,1\}} \frac{\rmd p_\theta(y_e \cond x_e)}{\rmd \theta} \E \Big[ \cP_\theta\br{X_e=x_e \cond A_{-e}} \log \Big( \sum_{x_e' \in \{0,1\}} p_\theta(y_e \cond x_e') \, \cP_\theta\{X_e=x_e'\cond A_{-e}\} \Big) \Big] \\
&= \frac{\rmd p_\theta(1 \cond x_e)}{\rmd \theta} \E \Big[ \cP_\theta\br{X_e=x_e \cond A_{-e}} \log \dot X_e \Big] + \frac{\rmd p_\theta(0 \cond x_e)}{\rmd \theta} \E \Big[ \cP_\theta\br{X_e=x_e \cond A_{-e}} \log(1 - \dot X_e) \Big] \\
&= -\frac{\rmd p_\theta(1\cond x_e)}{\rmd \theta}\E \kr{\cP_\theta\br{X_e=x_e \cond A_{-e}} h'(\dot X_e)} ,
\end{align*}
where the last step follows from the facts $\frac{\rmd p_\theta(1\cond x_e)}{\rmd \theta} = - \frac{\rmd p_\theta(0\cond x_e)}{\rmd \theta}$ and $h'(x) = \log \frac{1-x}{x}$.
Summing the above equation over $x_e \in \{0,1\}$ and recalling the definition of $\rone_e$ in Lemma~\ref{lem:immse}, we obtain
\begin{equation*}
\rone_e = - \E \kr{ \hat X_e \, h'(\dot X_e)} + \frac{\tau}{1-\tau} \E \kr{ (1-\hat X_e) \, h'(\dot X_e)}
= \frac{-1}{1-\tau} \E \kr{ \pr{\Xhat_e - \tau} \, h'(\dot X_e) } .
\end{equation*}

Furthermore, let
\begin{equation*}
\Delta_e := \dot X_e - r = \frac{\theta-r}{1-\tau} \pr{\Xhat_e - \tau} .
\end{equation*}
Then we have $\E[\Delta_e]=0$ and $\E \kr{\Delta_e^2} = \pr{\frac{\theta-r}{1-\tau}}^2 \Var\pr{\Xhat_e}$. 
Since $h''(x)= - \frac{1}{x(1-x)}$, Taylor's theorem gives $h'(\dot X_e) = h'(r) - \frac{1}{\xi(1-\xi)}\Delta_e$ for some $\xi$ between $r$ and $\dot X_e$. 
Hence,
\begin{align*}
\rone_e
= \frac{-1}{1-\tau} \E \kr{ \pr{\Xhat_e - \tau} \, h'(\dot X_e) }
&= \frac{1}{(1-\tau)\xi(1-\xi)} \E \kr{ \Delta_e\pr{\Xhat_e - \tau}} \\
&= \frac{\theta-r}{(1-\tau)^2\xi(1-\xi)}\Var(\Xhat_e)
\geq \frac{\theta-r}{(1-\tau)^2 \theta}\Var(\Xhat_e),
\end{align*}
where the last bound holds because $\dot X_e$ is between $\theta$ and $q = \frac{r - \tau \theta}{1 - \tau}$ by definition and then $\xi \le \theta$.
%
\end{proof}

\begin{lemma} \label{lem:mutual info upper bound 2}
Consider the posterior mean under $\cP_\theta$ denoted by
$$
\Xtd := \E_\theta[X \cond A].
$$
Let $\Xhat$ be defined as in Lemma~\ref{lem:mutual info upper bound 1}.
Recall that $q := \frac{r-\tau \theta}{1-\tau}$.
For every $e \in \binom{[n]}{2}$, we have
\begin{align*}
\Var_\theta (\Xtd_e) &\le \Var_\theta (\Xhat_e) + 2 \theta \tau , \\
\Var_\theta (\Xtd_e) &\le \frac{\theta^2}{q^2} \Var_\theta(\Xhat_e) + \tau^2 \Big( \frac{\theta^2}{q^2} - 1 \Big) .
\end{align*}
\end{lemma}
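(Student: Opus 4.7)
The plan is to reduce both inequalities to a pointwise estimate of $\tilde X_e$ in terms of $\hat X_e$ obtained from Bayes' rule, and then to pass to (conditional) second moments. Write $u := \hat X_e$ and $\alpha := \cP_\theta(A_e = 1 \mid A_{-e})$. By the conditional independence of $A_e$ from $A_{-e}$ given $X_e$, we have $\alpha = \theta u + q(1-u)$, and Bayes' rule gives $\tilde X_e = f(u, A_e)$ with
\begin{equation*}
f(u, 1) = \frac{\theta u}{\alpha}, \qquad f(u, 0) = \frac{(1-\theta) u}{1-\alpha}.
\end{equation*}
Since $\theta \ge q$ (from $r = \tau \theta + (1-\tau) q$ together with $\theta \ge r$), one checks $\alpha - q = (\theta-q)u \ge 0$ and $(1-\alpha) - (1-\theta) = (\theta-q)(1-u) \ge 0$, which in turn yield the pointwise bounds
\begin{equation*}
f(u, 0) \le u, \qquad f(u, 1) \le \min\bigl\{1, \tfrac{\theta}{q} u\bigr\}.
\end{equation*}
The identities $\alpha f(u, 1) = \theta u$ and $(1-\alpha) f(u, 0) = (1-\theta) u$ also give the key formula
\begin{equation*}
\E_\theta[\tilde X_e^2 \mid A_{-e}] = \alpha f(u, 1)^2 + (1-\alpha) f(u, 0)^2 = \theta u \, f(u, 1) + (1-\theta) u \, f(u, 0),
\end{equation*}
which linearizes the conditional second moment of $\tilde X_e$.

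For the first inequality, the tower property gives $\hat X_e = \E_\theta[\tilde X_e \mid A_{-e}]$, so the law of total variance yields $\Var_\theta(\tilde X_e) = \Var_\theta(\hat X_e) + \E_\theta[\Var_\theta(\tilde X_e \mid A_{-e})]$. Plugging $f(u, 1) \le 1$ and $f(u, 0) \le u$ into the key formula,
\begin{equation*}
\Var_\theta(\tilde X_e \mid A_{-e}) = \E_\theta[\tilde X_e^2 \mid A_{-e}] - u^2 \le \theta u + (1-\theta) u^2 - u^2 = \theta u (1-u).
\end{equation*}
Taking expectation and using $\E_\theta[\hat X_e] = \E_\theta[X_e] = \tau$ then gives $\E_\theta[\Var_\theta(\tilde X_e \mid A_{-e})] \le \theta \tau \le 2 \theta \tau$.

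For the second inequality, combining $f(u, 1) \le (\theta/q) u$ and $f(u, 0) \le u \le (\theta/q) u$ yields $\tilde X_e \le (\theta/q) \hat X_e$ pointwise. Squaring, taking expectations, and using $\E_\theta[\tilde X_e] = \E_\theta[\hat X_e] = \tau$ give
\begin{equation*}
\Var_\theta(\tilde X_e) + \tau^2 = \E_\theta[\tilde X_e^2] \le \tfrac{\theta^2}{q^2} \E_\theta[\hat X_e^2] = \tfrac{\theta^2}{q^2} \bigl(\Var_\theta(\hat X_e) + \tau^2\bigr),
\end{equation*}
and rearranging yields the claim. There is no serious obstacle here; the main trick is the algebraic simplification $\alpha f(u, 1)^2 = \theta u \, f(u, 1)$, which reduces the conditional second moment of $\tilde X_e$ to a linear expression in $f$ so that the elementary pointwise bounds on $f$ are enough.
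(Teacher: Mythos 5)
Your proof is correct and follows essentially the same route as the paper: both derive the Bayes'-rule expression for $\tilde X_e$ in terms of $\hat X_e$ and $A_e$, establish the same pointwise bounds $f(u,0)\le u$ and $f(u,1)\le 1\wedge(\theta u/q)$, and obtain the second inequality identically by squaring $\tilde X_e\le(\theta/q)\hat X_e$. The only (minor) divergence is in the first inequality, where you use the law of total variance together with the linearization $\alpha f(u,1)^2=\theta u\,f(u,1)$ to get $\E_\theta[\Var_\theta(\tilde X_e\mid A_{-e})]\le\theta\tau$, whereas the paper squares the pointwise bounds directly and controls $\E_\theta[A_e(1\wedge\frac{\theta\hat X_e}{q})]\le 2\theta\tau$ by conditioning on $X_e$; your version even yields the slightly sharper constant $\theta\tau$ in place of $2\theta\tau$.
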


\begin{proof}
For $e \in \binom{[n]}{2}$, we have
\begin{align*}
p_\theta\pr{X_e \cond A} 
&= \frac{ p_\theta\pr{X_e , A_e,A_{-e}}}{ p_\theta\pr{A_{-e},A_e}} \\
&= \frac{ p_\theta\pr{A_{-e}} p_\theta\pr{X_e  \cond A_{-e}} p_\theta\pr{A_{e}\cond X_e ,A_{-e}}}{\sum_{x_e\in\{0,1\}}  p_\theta\pr{A_{-e}} p_\theta\pr{x_e \cond A_{-e}} p_\theta\pr{A_{e}\cond X_e = x_e,A_{-e}}} \\
&= \frac{ p_\theta\pr{X_e  \cond A_{-e}} p_\theta\pr{A_{e}\cond X_e}}{\sum_{x_e\in\{0,1\}}  p_\theta\pr{x_e \cond A_{-e}} p_\theta\pr{A_{e}\cond X_e = x_e}} ,
\end{align*}
where the last equality holds because $p_\theta(A_e \cond X_e = x_e, A_{-e}) = p_\theta(A_e \cond X_e = x_e)$.
Recall that $\Xtd_e = p_\theta(X_e = 1 \cond A)$ and $\Xhat_e = p_\theta(X_e = 1 \cond A_{-e})$ by definition, so
\begin{align*}
\Xtd_e 
& = \frac{\Xhat_e p_\theta\pr{A_e\cond X_e = 1}}{\Xhat_e p_\theta\pr{A_e\cond X_e = 1}+(1-\Xhat_e) p_\theta\pr{A_e\cond X_e = 0}} \\
& = \frac{\Xhat_e \pr{A_e \theta + (1-A_e)(1-\theta)}}{\Xhat_e \pr{A_e \theta + (1-A_e)(1-\theta)} + (1-\Xhat_e) \pr{A_e \frac{r-\tau \theta}{1-\tau} + (1-A_e)\pr{1-\frac{r-\tau \theta}{1-\tau}}}} .
\end{align*}
If $A_e = 1$, then
$$
\Xtd_e = \frac{\Xhat_e \theta(1-\tau)}{\Xhat_e(\theta-r)+r-\tau\theta }
\le 1 \land \frac{\Xhat_e \theta(1-\tau)}{r-\tau\theta} 
= 1 \land \frac{\Xhat_e \theta}{q} 
$$
since $0 \le \hat X_e \le 1$, $r \le \theta$, and $q = \frac{r-\tau \theta}{1-\tau}$;
on the other hand, if $A_e = 0$, then 
$$
\Xtd_e =  \frac{\Xhat_e (1-\theta - \tau + \tau\theta)}{(r-\theta)\Xhat_e + (1-\tau - r + \tau \theta)}
\le \frac{\Xhat_e (1-\theta - \tau + \tau\theta)}{(r-\theta) + (1-\tau - r + \tau \theta)} 
= \Xhat_e .
$$
Combining the two cases yields two bounds:
\begin{align}
\Xtd_e^2 &\le A_e \Big( 1 \land \frac{\Xhat_e \theta}{q} \Big)^2 + (1-A_e) \Xhat_e^2 
\le A_e \Big( 1 \land \frac{\Xhat_e \theta}{q} \Big) + \Xhat_e^2 , \label{eq:x-tilde-x-hat-bd-1} \\
\Xtd_e^2 &\le \frac{\theta^2}{q^2} \hat X_e^2 . \label{eq:x-tilde-x-hat-bd-2}
\end{align}

Taking the expectation of \eqref{eq:x-tilde-x-hat-bd-1}, we obtain
$$
\E_\theta[\Xtd_e^2] 
\le \E_\theta\Big[ A_e \Big( 1 \land \frac{\Xhat_e \theta}{q} \Big) \Big] + \E_\theta[\Xhat_e^2] .
$$
Conditional on $X_e$, we have that $A_e$ is independent of $A_{-e}$ and thus of $\Xhat_e$. 
Moreover, $\E_\theta[A_e \cond X_e] = \theta X_e + q (1-X_e)$ under model $\cP_\theta$ from Definition~\ref{def:interpolation model}.  
Therefore,
\begin{align*}
\E_\theta\Big[ A_e \Big( 1 \land \frac{\Xhat_e \theta}{q} \Big) \Big] 
= \E_\theta\Big[ \big( \theta X_e + q (1-X_e) \big) \Big( 1 \land \frac{\Xhat_e \theta}{q} \Big) \Big] 
\le \E_\theta[\theta X_e] + \E_\theta[\theta \Xhat_e] = 2 \theta \tau ,
\end{align*}
where the inequality holds because $\theta X_e (1 \land \frac{\Xhat_e \theta}{q}) \le \theta X_e$ and $q(1-X_e) (1 \land \frac{\Xhat_e \theta}{q}) \le \theta \hat X_e$.
%
Finally, note that $\E_\theta [\Xtd_e] = \E_\theta [\hat X_e] = \tau$.
Putting it all together, we conclude that
\begin{align*}
\Var_\theta (\Xtd_e)
= \E_\theta[\Xtd_e^2] - \tau^2
\le 2 \theta \tau + \E_\theta[\Xhat_e^2] - \tau^2
= 2 \theta \tau + \Var_\theta(\Xhat_e) .
\end{align*}
For the other bound, we take the expectation of \eqref{eq:x-tilde-x-hat-bd-2} to obtain
$$
\Var_\theta (\Xtd_e)
= \E_\theta[\Xtd_e^2] - \tau^2
\le \frac{\theta^2}{q^2} \, \E_\theta[\Xhat_e^2] - \tau^2
= \frac{\theta^2}{q^2} \Var_\theta(\Xhat_e) + \tau^2 \Big( \frac{\theta^2}{q^2} - 1 \Big) .
$$
\end{proof}

\begin{lemma}
\label{lem:mmse-monotone}
The MMSE $\mmse(\theta)$ defined in \eqref{eq:mmse-theta} is non-increasing in $\theta \in [r,1]$. 
\end{lemma}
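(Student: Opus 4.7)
The plan is to establish Lemma~\ref{lem:mmse-monotone} by a Blackwell-style data-processing argument. Concretely, for any $r \le \theta_1 \le \theta_2 \le 1$, I will exhibit a single binary channel $W$ on $\{0,1\}$, independent of $X$, such that applying $W$ entrywise to an observation $A \sim (\cP_{\theta_2})_A$ produces an observation whose joint law with $X$ coincides with $\cP_{\theta_1}$. Once this degradation is in place, any estimator $\hat X^{(1)}$ valid for the $\theta_1$-problem can be simulated from $\cP_{\theta_2}$-data by applying $W$ and then $\hat X^{(1)}$, and conditional Jensen on the coordinatewise squared error yields $\mmse(\theta_2) \le \mmse(\theta_1)$.

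Under $\cP_\theta$, conditional on $X$ the edges are independent with $A_e \cond X_e = 1 \sim \Bern(\theta)$ and $A_e \cond X_e = 0 \sim \Bern(q_\theta)$, where $q_\theta := (r-\tau\theta)/(1-\tau)$. I therefore seek $a := W(1\cond 1)$ and $b := W(1\cond 0)$ in $[0,1]$ solving
\begin{align*}
\theta_1 &= a\theta_2 + b(1-\theta_2), \\
q_{\theta_1} &= a\, q_{\theta_2} + b(1-q_{\theta_2}).
\end{align*}
Using the identity $\theta - q_\theta = (\theta - r)/(1-\tau)$, subtracting the two equations gives $a - b = (\theta_1 - r)/(\theta_2 - r)$, and back-substitution yields
$$
b = \frac{r(\theta_2 - \theta_1)}{\theta_2 - r}, \qquad a = \frac{\theta_1(1-r) - r(1-\theta_2)}{\theta_2 - r}.
$$
A short algebraic check using $r \le \theta_1 \le \theta_2 \le 1$ shows $a, b \in [0, 1]$ (and the boundary case $\theta_2 = r$ forces $\theta_1 = r$ trivially), so $W$ is a valid Markov kernel.

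Applying $W$ independently to each edge of $A \sim (\cP_{\theta_2})_A$ (conditional on $X$) produces $A'$ whose joint distribution with $X$ coincides with $\cP_{\theta_1}$: the marginal of $X$ is unchanged, and the conditional law of $A'$ given $X$ factorizes into the correct edgewise Bernoullis by construction. Let $\omega$ denote the internal randomness of $W$, so that $A' = W(A, \omega)$ with $\omega$ independent of $(A, X)$. For any estimator $\hat X^{(1)}(A)$, the averaged estimator $\hat X^{(2)}(A) := \E_\omega[\hat X^{(1)}(W(A, \omega))]$ is a deterministic function of $A$, and conditional Jensen applied entrywise gives
$$
\E_{\theta_2}[R(\hat X^{(2)}, X)] \le \E_{\theta_2} \E_\omega\!\left[R\big(\hat X^{(1)}(W(A, \omega)), X\big)\right] = \E_{\theta_1}[R(\hat X^{(1)}, X)],
$$
where the last equality uses that $(W(A, \omega), X) \sim \cP_{\theta_1}$. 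Taking the infimum over $\hat X^{(1)}$ yields $\mmse(\theta_2) \le \mmse(\theta_1)$. The only slightly delicate step is checking $a, b \in [0, 1]$, which is elementary algebra; no conceptual obstacle is anticipated.
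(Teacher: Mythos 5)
Your proposal is correct and takes essentially the same route as the paper: both exhibit the degradation channel with $W(1\cond 1)=a$ and $W(1\cond 0)=b$ (the paper's $x$ and $y$ are exactly your $a$ and $b$), the only cosmetic difference being that the paper pins down the channel by matching the pair $(\theta_2,r)$ rather than $(\theta_2,q_{\theta_2})$, which gives the same kernel since the map $s\mapsto as+b(1-s)$ is affine and $q_\theta$ depends affinely on $\theta$. You are in fact more explicit than the paper about verifying $a,b\in[0,1]$ and about the Jensen/derandomization step.
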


\begin{proof}
Fix $0 < r \le \theta < \theta' \le 1$.
To prove that $\mmse(\theta') \le \mmse(\theta)$, it suffices to show that given $A' \sim \cP_{\theta'}$, we can obtain $A \sim \cP_{\theta}$. To this end, conditional on $A'$, we sample $A_{ij} \sim \Bern(x)$ if $A'_{ij} = 1$ and $A_{ij} \sim \Bern(y)$ if $A'_{ij} = 0$, independently for $(i,j) \in \binom{[n]}{2}$. If $A'_{ij} \sim \Bern(\theta')$, then $A_{ij} \sim \Bern(x \theta' + y (1-\theta'))$ marginally; if $A'_{ij} \sim \Bern(r)$, then $A_{ij} \sim \Bern(x r + y (1-r))$. It remains to choose $x = \frac{\theta - r + r (\theta' - \theta)}{\theta' - r}$ and $y = \frac{r (\theta' - \theta)}{\theta' - r}$ so that $x \theta' + y (1-\theta') = \theta$ and $x r + y (1-r) = r$.
\end{proof}

\begin{lemma} \label{lem:mmse-trivial-interpolation}
For any $r < p' < p \le 1$, let $\mmse(p')$ be defined by \eqref{eq:mmse-theta}. Consider the models $\cP$ and $\cQ$ from Definitions~\ref{def:mod-p} and~\ref{def:mod-q} respectively. 
Then we have
\begin{align*}
\binom{n}{2} \tau(1-\tau) - \mmse(p') &\le \Big( \KL(\cP_A \| \cQ) + \frac{n^2 \tau (p-r)^2}{2 (1-\tau)^2} \Big) \frac{(1-\tau)^2 p}{(p-p')(p'-r)} , \\
\binom{n}{2} \tau(1-\tau) - \mmse(p') &\le \Big( \KL(\cP_A \| \cQ) + \frac{n^2 \tau^2 (p^2 - q^2) (p-r) (p-p')}{(1-\tau)^2 (p')^3} \Big) \frac{(1-\tau)^2 p^3}{q^2 (p-p') (p'-r)} .
\end{align*}
\end{lemma}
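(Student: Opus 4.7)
My plan is to follow the area-theorem / I-MMSE pipeline standard in recovery lower-bound arguments (cf.\ \cite{deshpandeAsymptoticMutualInformation2016,measson2009generalized}): rewrite $\binom{n}{2}\tau(1-\tau) - \mmse(p')$ as a sum of posterior-mean variances, use monotonicity of $\mmse$ in $\theta$ to replace this single-point expression by an average over $\theta \in [p', p]$, then apply Lemmas~\ref{lem:mutual info upper bound 1} and~\ref{lem:mutual info upper bound 2} edge-wise, and finally invoke Lemma~\ref{lem:kl-integral} globally to replace the integral of $\rone_e$ by $\KL(\cP_A \| \cQ)$.

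Concretely, writing $\Xtd_e := \E_\theta[X_e \cond A]$ for the Bayes-optimal estimator under squared loss, the facts $X_e \in \{0,1\}$ and $\E_\theta[\Xtd_e] = \tau$ give the identity $\binom{n}{2}\tau(1-\tau) - \mmse(\theta) = \sum_e \Var_\theta(\Xtd_e)$. Lemma~\ref{lem:mmse-monotone} yields $\mmse(\theta) \le \mmse(p')$ for every $\theta \in [p', p]$, so the sum on the right is pointwise $\ge \binom{n}{2}\tau(1-\tau) - \mmse(p')$ on this interval, and averaging in $\theta$ gives
\[
\binom{n}{2}\tau(1-\tau) - \mmse(p') \le \frac{1}{p - p'} \int_{p'}^{p} \sum_e \Var_\theta(\Xtd_e) \, d\theta.
\]

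For the first stated bound I chain $\Var_\theta(\Xtd_e) \le \Var_\theta(\Xhat_e) + 2\theta\tau$ from Lemma~\ref{lem:mutual info upper bound 2} with $\Var_\theta(\Xhat_e) \le \frac{(1-\tau)^2 \theta}{\theta - r}\rone_e$ from Lemma~\ref{lem:mutual info upper bound 1}. Since $\theta \mapsto \theta/(\theta-r)$ is decreasing on $(r,\infty)$, on $[p', p]$ it is bounded by $p'/(p'-r) \le p/(p'-r)$. Integrating, using Lemma~\ref{lem:kl-integral} together with $\rone_e \ge 0$ (visible from the closed-form expression for $\rone_e$ in the proof of Lemma~\ref{lem:mutual info upper bound 1}), the $\Xhat_e$ contribution is at most $\frac{(1-\tau)^2 p}{p' - r}\KL(\cP_A \| \cQ)$ while the $2\theta\tau$ piece integrates to $\binom{n}{2}\tau(p-p')(p+p')$. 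Dividing by $p-p'$ leaves a residual $\binom{n}{2}\tau(p+p')$, which is absorbed into the target using $\binom{n}{2}\le n^2/2$ together with the elementary inequality $(p+p')(p-p')(p'-r) \le p(p-r)^2$, itself immediate from $p+p' \le 2p$ and the AM-GM bound $(p-p')(p'-r) \le (p-r)^2/4$.

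The second stated bound follows by the same pipeline using instead the second inequality of Lemma~\ref{lem:mutual info upper bound 2}, $\Var_\theta(\Xtd_e) \le (\theta^2/q_\theta^2)\Var_\theta(\Xhat_e) + \tau^2(\theta^2/q_\theta^2 - 1)$, where $q_\theta := (r - \tau\theta)/(1-\tau)$ denotes the $q$-parameter of the interpolating model $\cP_\theta$ (so $q = q_p$). Since $q_\theta$ is decreasing in $\theta$, the uniform bound $\theta^2/q_\theta^2 \le p^2/q^2$ holds on $[p', p]$, and the composite prefactor in front of $\rone_e$ is accordingly bounded by $(1-\tau)^2 p^3 / (q^2(p'-r))$. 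The residual additive piece collapses into the claimed form via the trivial inequality $\binom{n}{2} \le n^2 p^3(p-r)/((p')^3(p'-r))$, which holds because $p' \le p$ and $p' - r \le p - r$. I do not foresee any serious technical obstacle; the only care required is in picking the uniform bounds for the composite prefactors so that the leading $\KL$-term emerges with exactly the $\frac{p}{(p-p')(p'-r)}$ and $\frac{p^3}{q^2(p-p')(p'-r)}$ shapes stated.
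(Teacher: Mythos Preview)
Your proposal is correct and follows essentially the same approach as the paper: both combine the identity $\binom{n}{2}\tau(1-\tau)-\mmse(\theta)=\sum_e\Var_\theta(\Xtd_e)$ with Lemmas~\ref{lem:mutual info upper bound 1}, \ref{lem:mutual info upper bound 2}, \ref{lem:kl-integral}, and the monotonicity in Lemma~\ref{lem:mmse-monotone}. The only organizational difference is that the paper first rearranges the pointwise bound into a lower bound on $\sum_e\rone_e$, integrates it over the full interval $[r,p]$ (so the residual $2\theta\tau$ term directly produces $\frac{n^2\tau(p-r)^2}{2(1-\tau)^2}$), and then restricts the main term to $[p',p]$ using $\mmse(\theta)\le\binom{n}{2}\tau(1-\tau)$; you instead average $\Var_\theta(\Xtd_e)$ over $[p',p]$, bound the prefactor uniformly, and then extend $\int_{p'}^p\sum_e\rone_e\,d\theta$ to $[r,p]$ via $\rone_e\ge 0$, absorbing the residual afterwards with the elementary inequalities you state. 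These are two orderings of the same computation and yield the same bounds.
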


\begin{proof}
It is standard that the posterior mean $\tilde X$ achieves the MMSE, and, by the law of total expectation, $\E[\langle X_e, \Xtd_e \rangle] = \E[\Xtd_e^2]$ and $\E_\theta [X_e] = \E_\theta [\Xtd_e] = \tau$. We then derive
\begin{align*}
\mmse(\theta) &= \sum_{e\in\binom{[n]}{2}} \E_\theta \kr{\pr{X_e-\Xtd_e}^2} 
= \sum_{e\in\binom{[n]}{2}} \pr{\E_\theta\kr{X_e^2}-\E_\theta\kr{\Xtd_e^2}} 
= \sum_{e\in\binom{[n]}{2}} \pr{\Var_\theta\pr{X_e}-\Var_\theta\pr{\Xtd_e}} .
\end{align*}
We have $\Var_\theta (X_e) = \tau(1-\tau)$. Moreover, Lemma~\ref{lem:mutual info upper bound 1} and the first bound in Lemma~\ref{lem:mutual info upper bound 2} together imply that $\Var_\theta(\Xtd_e) \le \Var_\theta (\Xhat_e) + 2 \theta \tau \le \frac{(1-\tau)^2 \theta}{\theta-r} \, \rone_e + 2 \theta \tau$.
Therefore, 
\begin{align*}
\mmse(\theta) 
&\geq \binom{n}{2} \tau(1-\tau) - n^2 \theta \tau - \frac{(1-\tau)^2 \theta}{\theta-r} \sum_{e\in\binom{[n]}{2}} \rone_e . 
\end{align*}
%
%
Combining this with Lemma~\ref{lem:kl-integral} then gives
\begin{align*}
\KL(\cP_A \| \cQ) = \sum_{e\in \binom{[n]}{2}} \int_r^p \rone_e \, \rmd \theta 
& \ge \int_r^p \frac{\theta-r}{(1-\tau)^2 \theta} \bigg[ \binom{n}{2} \tau(1-\tau) - \mmse(\theta) - n^2 \tau \theta \bigg] \, \rmd \theta \\
& = \int_{r}^p \frac{\theta - r}{(1-\tau)^2 \theta} \bigg[ \binom{n}{2} \tau(1-\tau) - \mmse(\theta) \bigg] \, \rmd \theta - \frac{n^2 \tau (p-r)^2}{2 (1-\tau)^2} \\
& \ge \int_{p'}^p \frac{p'-r}{(1-\tau)^2 p} \bigg[ \binom{n}{2} \tau(1-\tau) - \mmse(\theta) \bigg] \, \rmd \theta - \frac{n^2 \tau (p-r)^2}{2 (1-\tau)^2} ,
\end{align*}
where the last inequality holds for any $p' \in [r,p]$ because $\mmse(\theta)$ is no larger than $\binom{n}{2} \tau (1-\tau)$, the trivial mean squared error achieved by the constant estimator $\E[X]$.
Finally, since $\mmse(\theta)$ is non-increasing in $\theta$ by Lemma~\ref{lem:mmse-monotone}, we obtain
\begin{align*}
\KL(\cP_A \| \cQ)
\ge \frac{(p-p')(p'-r)}{(1-\tau)^2 p} \bigg[ \binom{n}{2} \tau(1-\tau) - \mmse(p') \bigg] - \frac{n^2 \tau (p-r)^2}{2 (1-\tau)^2} .
\end{align*}
Rearranging the above inequality proves the first bound in the lemma.

The other bound is obtained in a similar way, using the second bound in Lemma~\ref{lem:mutual info upper bound 2}. We have $\Var_\theta(\Xtd_e) \le \frac{\theta^2}{q(\theta)^2} \Var_\theta(\Xhat_e) + \tau^2 \Big( \frac{\theta^2}{q(\theta)^2} - 1 \Big) \le \frac{(1-\tau)^2 \theta^3}{q(\theta)^2 (\theta-r)} \, \rone_e + \tau^2 \Big( \frac{\theta^2}{q(\theta)^2} - 1 \Big)$ where $q(\theta) := \frac{r - \tau \theta}{1-\tau}$.
Therefore, 
\begin{align*}
\mmse(\theta) 
&\geq \binom{n}{2} \tau(1-\tau) - n^2 \tau^2 \Big( \frac{\theta^2}{q(\theta)^2} - 1 \Big) - \frac{(1-\tau)^2 \theta^3}{q(\theta)^2 (\theta-r)} \sum_{e\in\binom{[n]}{2}} \rone_e . 
\end{align*}
Similar to the previous case, we have
\begin{align*}
\KL(\cP_A \| \cQ) 
& \ge \int_r^p \frac{q(\theta)^2 (\theta-r)}{(1-\tau)^2 \theta^3} \bigg[ \binom{n}{2} \tau(1-\tau) - \mmse(\theta) - n^2 \tau^2 \Big( \frac{\theta^2}{q(\theta)^2} - 1 \Big) \bigg] \, \rmd \theta \\
& \ge \int_{p'}^p \frac{q(\theta)^2 (p'-r)}{(1-\tau)^2 p^3} \bigg[ \binom{n}{2} \tau(1-\tau) - \mmse(\theta) \bigg] - \frac{n^2 \tau^2 (\theta^2 - q(\theta)^2) (\theta-r)}{(1-\tau)^2 \theta^3} \, \rmd \theta \\
& \ge \frac{q(p)^2 (p-p') (p'-r)}{(1-\tau)^2 p^3} \bigg[ \binom{n}{2} \tau(1-\tau) - \mmse(p') \bigg] - \frac{n^2 \tau^2 (p^2 - q(p)^2) (p-r) (p-p')}{(1-\tau)^2 (p')^3} .
\end{align*}
Rearranging the above inequality completes the proof.
\end{proof}

\subsection{Proof of Theorem~\ref{thm:recovery lower bound}}

Recall Theorem~\ref{thm:detection lower bound} which gives $\TV(\cP_A, \cQ) \to 0$.
This can strengthened as follows.


\begin{lemma} \label{lem:kl-go-to-zero}
Consider the setup of Theorem~\ref{thm:detection lower bound} where in particular $\tau \to 0$, $n \tau \ge (\log n)^3$, and $n\tau \lambda \to 0$ for $\lambda = \frac{(p-q)^2}{r(1-r)}$. 
In addition, assume that $r \ge 1/n$ and $n \lambda \tau^{3/2} \log n \to 0$ (which is implied by $n\tau \lambda \to 0$ if $\tau \le (\log n)^{-2}$).
Then we have $\KL(\cP_A \| \cQ) \to 0$.
\end{lemma}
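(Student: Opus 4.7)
The plan is to strengthen the conditional second moment argument used in Theorem~\ref{thm:detection lower bound} so that it bounds KL divergence instead of just total variation. Inspecting the proof of Lemma~\ref{lem:conditional-second-moment}, the argument actually establishes that $\chi^2(\cP'_A, \cQ) = o(1)$, where $\cP'$ denotes the planted model $\cP$ conditioned on the good event $\cE = \cE_0 \cap \cE_1$. By Jensen's inequality applied to $\log$, this yields $\KL(\cP'_A \| \cQ) \le \log(1 + \chi^2(\cP'_A, \cQ)) \to 0$, i.e., KL-convergence for the conditioned model.

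To transfer this to the unconditioned $\cP_A$, I would write $\cP_A = \cP(\cE) \cP'_A + \cP(\cE^c) \cP''_A$, where $\cP''_A$ is $\cP_A$ conditioned on $\cE^c$, and apply the joint convexity of KL divergence to obtain
\begin{equation*}
\KL(\cP_A \| \cQ) \le \cP(\cE) \KL(\cP'_A \| \cQ) + \cP(\cE^c) \KL(\cP''_A \| \cQ).
\end{equation*}
The first term vanishes by the previous step. For the second, the data processing inequality gives $\KL(\cP''_A \| \cQ) \le \E_{z \sim \cP''_z}[\KL(\cP(A\cond z) \| \cQ)]$, and since $\cP(A\cond z)$ is a product of $\mathrm{Bern}(p)$ and $\mathrm{Bern}(q)$ factors, the inner KL is at most $\binom{n}{2}[\KL(\mathrm{Bern}(p)\|\mathrm{Bern}(r)) + \KL(\mathrm{Bern}(q)\|\mathrm{Bern}(r))] \le C n^2 \lambda$ uniformly in $z$, using $\KL \le \chi^2$ for Bernoulli pairs together with the identities from \eqref{eq:elementary identities}. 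Hence $\cP(\cE^c) \KL(\cP''_A \| \cQ) \le C n^2 \lambda \cP(\cE^c)$.

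The main obstacle is showing that this last quantity vanishes. Under $r \ge 1/n$ one has $\lambda \le 1/(r(1-r)) \le 2n$, so it suffices to prove $\cP(\cE^c) = o(n^{-3})$. I would obtain this by sharpening the choice of the threshold $t_n$ in the definition \eqref{eq:goodEvent_2} of $\cE_0$: taking $t_n := C' n \lambda \tau^{3/2} \log n$ for a sufficiently large constant $C'$, the assumption $n \lambda \tau^{3/2} \log n \to 0$ still guarantees $t_n \to 0$, so \eqref{eq:goal1} and the rest of the proof of Theorem~\ref{thm:detection lower bound} remain valid. With this $t_n$, the three arguments of the minimum in the tail bound \eqref{eq:e-0-tail-bound} become of order $C' \log n$, $n^{1/3}(C' \log n)^{2/3}$, and $\sqrt{C' n \tau \log n}$ respectively; thanks to $n \tau \ge (\log n)^3$, all three exceed any constant multiple of $\log n$ for large $C'$, yielding $\cP(\cE_0^c) \le n^{-C}$ for arbitrary $C$. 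Combined with $\cP(\cE_1^c) \le n^{-10}$ from Lemma~\ref{lem:good event}, this gives $\cP(\cE^c) = o(n^{-3})$, completing the proof.
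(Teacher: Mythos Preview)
Your proposal is correct and follows essentially the same strategy as the paper: tighten the threshold $t_n$ in the definition of $\cE_0$ so that $\cP(\cE^c)$ decays polynomially, then split $\KL(\cP_A\|\cQ)$ via convexity into a good-event part controlled by the $\chi^2$ bound already established in Section~\ref{sec:proof-of-detect-lower} and a bad-event part controlled by $\cP(\cE^c)$ times a crude uniform bound. The only technical difference is in the bad-event term: you bound it by $\cP(\cE^c)\cdot C n^2\lambda$ via data processing and the edgewise $\chi^2$ identities, whereas the paper uses the even cruder pointwise estimate $\log(1/\cQ(A))\le n^2\log n$ coming from $r\ge 1/n$; both choices suffice given the polynomial smallness of $\cP(\cE^c)$.
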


\begin{proof}


Let $\Ecal = \cE_0 \cap \cE_1$ be the high-probability event defined in \eqref{eq:goodEvent}.
We first show that in fact $\cP(\cE^c) \le n^{-9}$ under the additional assumption $n \lambda \tau^{3/2} \log n \to 0$.
Recall Lemma~\ref{lem:bound_goal1} which shows that $\cP(\cE_0^c) \le c_n \to 0$. To have $c_n \le n^{-10}$, it suffices to choose $t = t_n \to 0$ in \eqref{eq:e-0-tail-bound} so that  
$$
\min \Big\{ \frac{t}{n \lambda \tau^{3/2}}, \frac{t^{2/3}}{n^{1/3} \lambda^{2/3} \tau}, \frac{t^{1/2}}{\lambda^{1/2} \tau^{1/2}} \Big\} \ge C \log n
$$
for a sufficiently large constant $C > 0$. This is possible 
thanks to the conditions $n \tau \ge (\log n)^3$, $n\tau \lambda \to 0$, and $n \lambda \tau^{3/2} \log n \to 0$.
Moreover, Lemma~\ref{lem:good event} already gives $\cP(\cE_1^c) \le n^{-10}$.
We conclude that $\cP(\cE^c) \le n^{-9}$.

Let $\cP(A, \cE) := \int_{\cE} \cP(A,z) \, \rmd z$. Then for any $\delta \in [0,1]$,
$$
\cP(A) 
= \cP(A, \cE) + \cP(A, \cE^c) 
= (1-\delta) \frac{\cP(A, \cE)}{1-\delta} + \delta \frac{\cP(A, \cE^c)}{\delta} .
$$
Since $x \mapsto x \log x$ is convex, Jensen's inequality implies that
\begin{align*}
\KL(\cP_A\| \cQ) & = \sum_{A} \Big( \cP(A) \log \cP(A) - \cP(A) \log \cQ(A) \Big) \\
& \le \sum_{A} \pr{ (1-\delta) \frac{\cP(A, \cE)}{1-\delta} \log \frac{\cP(A, \cE)}{1-\delta} + \delta \frac{\cP(A, \cE^c)}{\delta} \log \frac{\cP(A, \cE^c)}{\delta} - \cP(A) \log \cQ(A) } \\
& = \sum_{A} \pr{ \cP(A, \cE) \log \frac{\cP(A, \cE)}{(1-\delta) \cQ(A)} + \cP(A, \cE^c) \log \frac{\cP(A, \cE^c)}{\delta \cQ(A)} } .
\end{align*}
Set $\delta = 1/n$.
For the first term above, we apply the inequality $\log x \le x-1$; for the second term, we use the trivial bounds $\cP(A, \cE^c) \le 1$ and $\cQ(A) \ge r^{\binom{n}{2}} \ge n^{-n^2+1}$ since $r \ge 1/n$. Then we obtain
\begin{align*}
\KL(\cP_A\| \cQ) 
&\le \sum_{A} \pr{ \cP(A, \cE) \Big( \frac{\cP(A, \cE)}{(1-1/n) \cQ(A)} - 1 \Big) + \cP(A, \cE^c) \, n^2 \log n } \\
&= \frac{n}{n-1} \sum_{A} \frac{\cP(A, \cE)^2}{\cQ(A)} - \cP(\cE) + \cP(\cE^c) \, n^2 \log n .
\end{align*}
As we have seen that $\cP(\cE^c) \le n^{-9}$, it remains to show that 
$$
\sum_{A} \frac{\cP(A, \cE)^2}{\cQ(A)} = 1+o(1) .
$$
In fact, this is precisely \eqref{eq:goal-of-conditional-second-moment} and has been verified in the proof of Theorem~\ref{thm:detection lower bound} in Section~\ref{sec:proof-of-detect-lower}.
\end{proof}



We are ready to prove Theorem~\ref{thm:recovery lower bound} by combining Lemmas~\ref{lem:mmse-trivial-interpolation} and~\ref{lem:kl-go-to-zero}. 
Let $\pi := 2 p - r$.
We first note (a somewhat trivial fact) that $\pi \le 1$, so that it is valid to consider the model $\cP_\pi$ from Definition~\ref{def:interpolation model}.
If $p \le 1/2$, then obviously $\pi \le 1$.
Assume $p > 1/2$. Note that by the conditions $n \tau \ge (\log n)^3 \to \infty$ and $n \tau \lambda \to 0$, we have $\lambda = \frac{(p-q)^2}{r(1-r)} \to 0$ and so $p-q \to 0$. Since $q \le r \le 1/2 \le p$, we have $p,r \to 1/2$. Thus, $\pi = 2p - r \le 1$.

Moreover, we claim that
\begin{equation*}
\KL((\cP_\pi)_A \| \cQ) \to 0 .
\end{equation*}
Let 
$q(\pi) := \frac{r - \tau \pi}{1-\tau}$ and 
$\lambda(\pi) := \frac{(\pi-q(\pi))^2}{r(1-r)}$.
Since $\tau < 1/2$ and $r - q(\pi) = \frac{\tau}{1-\tau} (\pi - r) \le \pi - r$, we have $\pi - q(\pi) = \pi - r + r - q(\pi) \le 2(\pi - r) = 4(p-r) \le 4 (p-q(p))$. Therefore,
$\lambda(\pi) \le 16 \lambda(p)$ by definition.
The conditions $n \tau \lambda(p) \to 0$ and $n \lambda(p) \tau^{3/2} \log n \to 0$ thus imply $n \tau \lambda(\pi) \to 0$ and $n \lambda(\pi) \tau^{3/2} \log n \to 0$. 
By Lemma~\ref{lem:kl-go-to-zero}, we see that $\KL((\cP_\pi)_A \| \cQ) \to 0$.

Since the MMSE $\mmse(\theta)$ is non-increasing in $\theta$ by Lemma~\ref{lem:mmse-monotone} and $\lambda(\theta) = \frac{(\theta - q(\theta))^2}{r(1-r)}$ is increasing in $\theta$, we may assume without loss of generality that $n \tau \lambda(p) \to 0$ arbitrarily slowly, e.g., $\lambda(p) = \frac{(p-q)^2}{r(1-r)} \ge \frac{1}{n \tau \log n}$.
The rest of the proof is split into two cases.

\paragraph{Case 1:} $p \to 0$. Since $r = \tau p + (1-\tau) q$, we have $p - r = (1-\tau) (p-q)$.
We apply the first bound in Lemma~\ref{lem:mmse-trivial-interpolation} with $p$ replaced by $\pi$ and $p'$ replaced by $p$ (and thus $\cP$ replaced by $\cP_\pi$) to obtain
\begin{align*}
\binom{n}{2} \tau(1-\tau) - \mmse(p) 
&\le \Big( \KL((\cP_\pi)_A \| \cQ) + \frac{n^2 \tau (\pi - r)^2}{2 (1-\tau)^2} \Big) \frac{(1-\tau)^2 \pi}{(\pi-p)(p-r)} \\
&= \KL((\cP_\pi)_A \| \cQ) \cdot \frac{2p-r}{(p-q)^2} + 2 n^2 \tau (2p-r) .
\end{align*}
For the first term, we have $\frac{(2p-r)}{(p-q)^2} \le \frac{1}{(p-q)^2} = \frac{1}{\lambda r(1-r)} \le \frac{n \tau \log n}{r(1-r)} \le 2 n^2 \tau$ since $\frac{\log n}{n} \le r \le \frac 12$.
We have shown that $\KL((\cP_\pi)_A \| \cQ) \to 0$, so the first term is $o(n^2 \tau)$.
The second term is also $o(n^2 \tau)$ because $p \to 0$ by assumption.

\paragraph{Case 2:} $p \ge 2 \delta$ for a constant $\delta > 0$. As before, we have $\lambda = \frac{(p-q)^2}{r(1-r)} \to 0$ and so $p-q \to 0$. Since $q \le r \le p$, we also have $p-r \to 0$. It is then easily seen that $p,q,r,\pi,q(\pi) \ge \delta$.
We apply the second bound in Lemma~\ref{lem:mmse-trivial-interpolation} with $p$ replaced by $\pi$ and $p'$ replaced by $p$ to obtain
\begin{align*}
\binom{n}{2} \tau(1-\tau) - \mmse(p) 
&\le \Big( \KL((\cP_\pi)_A \| \cQ) + \frac{n^2 \tau^2 (\pi^2 - q(\pi)^2) (\pi-r) (\pi-p)}{(1-\tau)^2 p^3} \Big) \frac{(1-\tau)^2 \pi^3}{q(\pi)^2 (\pi-p) (p-r)} \\
&= \KL((\cP_\pi)_A \| \cQ) \cdot \frac{(1-\tau)^2 \pi^3}{q(\pi)^2 (p-r)^2} + \frac{2 n^2 \tau^2 \pi^3 (\pi^2 - q(\pi)^2)}{p^3 q(\pi)^2} \\
&\le \KL((\cP_\pi)_A \| \cQ) \cdot \frac{1}{\delta^2 (p-q)^2} + \frac{2 n^2 \tau^2}{\delta^5} .
\end{align*}
Similar to the previous case, $\frac{1}{(p-q)^2} \le 2n^2 \tau$ and so the first term is $o(n^2 \tau)$. 
The second term is also $o(n^2 \tau)$ because $\tau \to 0$ by assumption.

\section{Upper bounds}
\label{sec:upper-bounds}


For both the detection problem from Definition~\ref{def:prob-detect} and the recovery problem from Definition~\ref{def:prob-recover}, the upper bounds are achieved via the same optimization program defined as follows.
In the sequel, for $X, A \in \R^{n \times n}$, we use the notation
$$
\abs{X} := \sum_{1 \le i < j \le n} |X_{ij}|, \qquad \langle X, A \rangle := \sum_{1 \le i < j \le n} X_{ij} A_{ij} .
$$
Define
\begin{equation}
\label{eq:feasible_set-no-constraint}
\Xfrak := \left\{X \in \{0,1\}^{\binom{n}{2}} : \text{there is } z \in [0,1]^n \text{ such that } X_{ij} = \bbone_{\dist(z_i,z_j) \le \tau/2} \text{ for all } (i,j) \in \binom{[n]}{2} \right\} ,
\end{equation}
i.e., $\Xfrak$ is the set of all noiseless geometric graphs that can be realized by $z$ as in Definition~\ref{def:mod-p}.
Moreover, let
\begin{equation}
\label{eq:feasible_set}
\tilde \Xfrak := \br{X \in \Xfrak : \abs{|X| - \binom{n}{2}\tau} \le n \tau^{1/2} \log n } .
\end{equation}
Then we consider the maximization problem
\begin{equation}\label{eq:estimator_def}
\hat X := \argmax_{X' \in \tilde \Xfrak} \, \langle X', A \rangle , \qquad
\hat L := \langle \hat X, A \rangle .
\end{equation}

The following result for detection holds.


\begin{theorem}
\label{thm:upper bound on detection}
Consider the detection problem from Definition~\ref{def:prob-detect}.
Suppose that 
$\frac{n \tau (p-r)^2}{r} \ge C \log n$ and $n \tau (p - r) \ge C \log n$ for a sufficiently large constant $C > 0$. 
Let $\hat L$ be defined in \eqref{eq:estimator_def} and $\kappa := \binom{n}{2}\tau \frac{p+r}{2}$. 
Then the test $\1_{\hat L > \kappa}$ achieves strong detection in the sense of Definition~\ref{def:prob-detect}.
\end{theorem}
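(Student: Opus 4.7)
The plan is to show that $\hat L$ sandwiches $\kappa$: under $\cP_A$ the value of $\hat L$ lies comfortably above $\kappa$ because $\hat L\geq\langle X,A\rangle\approx\binom{n}{2}\tau p$, whereas under $\cQ$ every $X'\in\tilde\Xfrak$ satisfies $\langle X',A\rangle\approx|X'|r\leq\binom{n}{2}\tau r+o(\binom{n}{2}\tau(p-r))<\kappa$. Both statements rest on Bernstein's inequality for Binomials, but the null side additionally requires a union bound over $\tilde\Xfrak$, for which a combinatorial count $|\tilde\Xfrak|\leq e^{O(n\log n)}$ is the crucial ingredient.

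For the planted side ($A\sim\cP_A$), I would first establish that the true cycle $X$ lies in $\tilde\Xfrak$ with probability $1-o(1)$ by applying the decoupling and $U$-statistic moment inequalities from Appendix~\ref{sec:u-stats} to the centered $U$-statistic $|X|-\binom{n}{2}\tau$ (a minor variant of the computation in Lemma~\ref{lem:bound_goal1} with deviation threshold $n\tau^{1/2}\log n$). On this event $\hat L\geq\langle X,A\rangle$, and conditional on $X$ we have $\langle X,A\rangle\sim\mathrm{Bin}(|X|,p)$. Bernstein's inequality with deviation $t=\binom{n}{2}\tau(p-r)/4$ gives
\begin{equation*}
\Pr\bigl(\langle X,A\rangle<|X|p-t\bigm|X\bigr)\leq\exp\Bigl(-c\min\bigl\{n^2\tau(p-r)^2/p,\; n^2\tau(p-r)\bigr\}\Bigr),
\end{equation*}
which is $o(1)$: when $p\lesssim r$ the first quantity dominates and is bounded below by $n^2\tau(p-r)^2/r\cdot\Theta(1)\gg n\log n$ by the first hypothesis; when $p\gg r$ we have $p-r\asymp p$ and the second quantity $\gg n\log n$ by the second hypothesis. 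Combined with $|X|p\geq\binom{n}{2}\tau p(1-o(1))$, this yields $\hat L>\kappa$ with probability $1-o(1)$.

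For the null side ($A\sim\cQ$), the entries $A_{ij}$ are i.i.d.\ $\mathrm{Bern}(r)$. For any fixed $X'\in\tilde\Xfrak$ with $|X'|=m\leq\binom{n}{2}\tau+n\tau^{1/2}\log n$, the variable $\langle X',A\rangle\sim\mathrm{Bin}(m,r)$ has mean $mr$; under the hypotheses the slack $n\tau^{1/2}r\log n$ is absorbed into $\binom{n}{2}\tau(p-r)/2$, so $\kappa-mr\geq\tfrac{1}{3}\binom{n}{2}\tau(p-r)$. Bernstein's inequality then yields
\begin{equation*}
\Pr\bigl(\langle X',A\rangle>\kappa\bigr)\leq\exp\Bigl(-c'\min\bigl\{n^2\tau(p-r)^2/r,\; n^2\tau(p-r)\bigr\}\Bigr).
\end{equation*}
To finish by union bound I would prove $|\tilde\Xfrak|\leq e^{O(n\log n)}$ by the following combinatorial parametrization: each $X\in\Xfrak$ is determined by a cyclic ordering of $[n]$ on the circle (at most $n!$ choices) together with, for each vertex $i$, a pair of integer counts indicating how many vertices immediately clockwise and counterclockwise of $z_i$ are within arc-distance $\tau/2$ (at most $n^2$ choices per vertex, since the neighborhood of $i$ in $X$ is a consecutive arc in the cyclic order). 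Thus $|\tilde\Xfrak|\leq n!\cdot n^{2n}=e^{O(n\log n)}$, and taking $C$ in the hypotheses sufficiently large forces the union bound to be $o(1)$.

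The main obstacle I anticipate is precisely the cardinality bound on $\tilde\Xfrak$: since $z$ is continuous but $X$ is discrete, one must pin down a finite combinatorial skeleton that parametrizes $\Xfrak$ and then verify that the resulting $e^{O(n\log n)}$ count is comfortably beaten by the Bernstein exponent $\gtrsim n\log n$ delivered by the two hypotheses with $C$ chosen large. The rest is routine Bernstein bookkeeping, with the two hypotheses precisely covering the subgaussian regime ($p-r\lesssim r$) and the Poisson-tail regime ($p-r\gg r$) of the Binomial tail under $\cQ$.
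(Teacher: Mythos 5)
Your proposal is correct and follows essentially the same route as the paper: show $X\in\tilde\Xfrak$ with high probability via the $U$-statistic tail bound, lower-bound $\hat L\geq\langle X,A\rangle\sim\mathrm{Bin}(|X|,p)$ under $\cP$ and upper-bound $\langle X',A\rangle\sim\mathrm{Bin}(|X'|,r)$ under $\cQ$ by Bernstein, and close the union bound with the count $|\Xfrak|\leq n!\cdot(n^2)^n\leq n^{3n}$, which is exactly the paper's Lemma~\ref{lem:deterministic upper bound on number of candidates}. The two hypotheses enter in the same way (covering the two regimes of the Bernstein exponent), so there is nothing substantive to add.
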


\begin{proof}
First, suppose that $(A, X) \sim \cP$ as in Definition~\ref{def:mod-p}. 
By the proof of Lemma~\ref{lem:bound_goal1}, 
$$
\p \left\{ \abs{\abs{X} - \binom{n}{2}\tau} > t \right\} 
\le K \exp\br{-\frac{1}{K}\min\kr{\frac{t}{n \tau^{1/2}}, \frac{t^{2/3}}{n^{1/3} \tau^{1/3}}, t^{1/2}}} 
$$
for a constant $K>0$.
Since $n^2 \tau \to \infty$, taking $t = n \tau^{1/2} \log n$ gives that
\begin{equation}
\abs{\abs{X} - \binom{n}{2}\tau} \le n \tau^{1/2} \log n
\label{eq:x-abs-size-bd}
\end{equation}
with probability $1-o(1)$.
We condition on a realization of $X$ such that the above bound holds.
In particular, $X \in \tilde \Xfrak$. 
By \eqref{eq:estimator_def}, we have $\hat L \geq \langle X, A \rangle$ where the lower bound has distribution $\Bin(|X|, p)$ under $\cP$.
Since $n^2 \tau p \to \infty$ (implied by the assumption $n \tau (p - r) \ge C \log n$), Bernstein's inequality together with \eqref{eq:x-abs-size-bd} yields that
\begin{equation*}
\hat L 
\geq \binom{n}{2} \tau p - p n \tau^{1/2} \log n - n \sqrt{\tau p} \log n
\geq \binom{n}{2} \tau p - 2 n \sqrt{\tau p} \log n
\end{equation*}
with probability at least $1-o(1)$. 
Putting it together, for $\hat L > \kappa = \binom{n}{2}\tau \frac{p+r}{2}$ to hold, we need 
$\binom{n}{2} \tau \frac{p - r}{2} > 2 n \sqrt{\tau p} \log n,$
which 
is implied by the assumption $n \tau (p - r) \ge C \log n$.

Next, suppose that $A \sim \cQ$.
Fix any $X' \in \tilde\Xfrak$. 
Then $\langle X', A \rangle \sim \Bin(\abs{X'},r)$.
Bernstein's inequality then gives that, with probability at least $1 - n^{-9 n}$, 
$$
\langle X', A \rangle 
\le |X'| r + C_1 \Big( \sqrt{|X'| r n \log n} + n \log n \Big) 
$$
for a constant $C_1 > 0$.
Then, by \eqref{eq:x-abs-size-bd}, Lemma~\ref{lem:deterministic upper bound on number of candidates} and a union bound over $X' \in \tilde \Xfrak$, we have that with probability $1-o(1)$,
\begin{equation*}
\hat L \le \binom{n}{2} \tau r 
+ C_2 \Big( \sqrt{n^3 \tau r \log n} + n \log n \Big) 
\end{equation*}
for a constant $C_2 > 0$.
For $\hat L < \kappa = \binom{n}{2}\tau \frac{p+r}{2}$ to hold, we need 
$\binom{n}{2} \tau \frac{p - r}{2} > C_2 ( \sqrt{n^3 \tau r \log n} + n \log n ).$
It suffices to have
$\frac{n \tau (p-r)^2}{r} \ge C_3 \log n$ and $n \tau (p - r) \ge C_3 \log n$ for a constant $C_3 > 0$.
\end{proof}

\begin{lemma}
\label{lem:deterministic upper bound on number of candidates}
We have
$|\Xfrak| \leq n^{3n}$ for $\Xfrak$ defined in \eqref{eq:feasible_set-no-constraint}.
\end{lemma}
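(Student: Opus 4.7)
The plan is to count realizable $X$'s combinatorially, by first fixing the cyclic ordering of the latent points and then counting possible neighborhood structures compatible with that ordering.

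First, I would observe that any $X \in \Xfrak$ is determined by the joint information of (i) the cyclic order of $z_1,\dots,z_n$ around the circle $[0,1)/\{0\sim 1\}$, and (ii) for each vertex $i$, the set of other vertices within circular distance $\tau/2$. The number of cyclic orderings of $n$ labeled points is at most $n! \le n^n$, which I would use as the first factor.

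Second, I would exploit the key geometric feature of the model: the set $\{w \in [0,1): \dist(w, z_i) \le \tau/2\}$ is a circular arc centered at $z_i$. Consequently, once a cyclic ordering is fixed, the neighborhood $N_i(X) := \{j : X_{ij} = 1\}$ must consist of a contiguous block of indices around $i$ in that ordering. Such a contiguous arc is determined by its two endpoints (the first and last index of $N_i$ in the cyclic order), and each endpoint is one of at most $n$ indices, giving at most $n^2$ choices per vertex. Taking the product over all $n$ vertices yields at most $(n^2)^n = n^{2n}$ possibilities for $X$ once the ordering is fixed.

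Multiplying the two bounds gives $|\Xfrak| \le n^n \cdot n^{2n} = n^{3n}$, as claimed. The only minor subtlety I would mention is that degenerate $z$'s (with ties among the $z_i$ or with some $\dist(z_i,z_j)$ equal to $\tau/2$) lie on a measure-zero set, but any $X$ they generate can also be realized as a limit of generic configurations or agrees with one produced by a nearby cyclic ordering, so ties do not enlarge $\Xfrak$ beyond the above count. I do not anticipate any serious obstacle here; the argument is essentially a one-line consequence of the arc structure of the model.
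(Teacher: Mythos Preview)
Your argument is correct and matches the paper's proof essentially line for line: the paper also relabels the points in clockwise cyclic order (incurring an $n!$ factor), then observes that each vertex's neighborhood is a contiguous arc determined by two endpoints (at most $n^2$ choices), and multiplies to get $n! \cdot (n^2)^n \le n^{3n}$. Your remark about ties is a nice extra bit of care, though it is not strictly needed since any cyclic ordering consistent with the positions (breaking ties arbitrarily) still makes each neighborhood a contiguous arc.
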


\begin{proof}
Consider $n$ points $z_1, \dots, z_n$ on the unit circle.
Recall that $X_{ij} = \bbone_{\dist(z_i,z_j) \le \tau/2}$ for $(i,j) \in \binom{[n]}{2}$. 
For simplicity, we start with an arbitrary point and relabel the $n$ points as $z_1, \dots, z_n$ clockwise on the circle; this relabeling incurs at most an $n!$ factor when counting $X \in \Xfrak$.
Since $\tau < 1/2$, the neighbors of $z_1$ in the graph $X$ must be of the form $z_2, z_3, \dots, z_i$ on one side and $z_j, z_{j+1}, \dots, z_n$ on the other side.
Thus, there are at most $n^2$ possible choices for the neighborhood of vertex $1$ in the graph $X$.
The same holds for any of the $n$ vertices.
In conclusion, $|\Xfrak| \le n! (n^2)^n \le n^{3n}$.
%
\end{proof}

Next, we turn to the recovery problem.

\begin{theorem}
\label{thm:recovery-upper-bound-thm}
Consider the recovery problem from Definition~\ref{def:prob-recover}.
Suppose that $\frac{n \tau (p-q)^2}{p \log n} \to \infty$ and $\frac{n \tau (p-q)}{\log n} \to \infty$. 
Then the estimator $\hat X$ defined in \eqref{eq:estimator_def} achieves strong recovery in the sense of Definition~\ref{def:prob-recover}.
\end{theorem}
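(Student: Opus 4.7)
The recovery loss reduces to the expected Hamming distance $R(\hat X, X) = \E[|\hat X \triangle X|]$ since both the estimator and the target are $\{0,1\}$-valued. My plan is an MLE-style analysis: show that $|\hat X \triangle X| = o(n^2 \tau)$ with high probability, and handle the complementary event via the trivial bound $|\hat X \triangle X| \le \binom{n}{2}$. By enlarging the logarithmic factor in \eqref{eq:feasible_set} (from $\log n$ to a sufficient constant multiple), the same Bernstein-type $U$-statistic tail bound used in the proof of Theorem~\ref{thm:upper bound on detection} forces $\p(X \in \tilde\Xfrak) \ge 1 - n^{-10}$. The assumption $n\tau(p-q)/\log n \to \infty$ (with $p-q \le 1$) gives $n\tau \gg \log n$, hence $\tau \ge \log n / n$, so the complementary contribution is $\binom{n}{2} n^{-10} = o(n^2\tau)$.

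For any fixed $X' \in \tilde\Xfrak$, decompose $|X \triangle X'| = d_+ + d_-$ with $d_+ := |\{X_{ij}=1, X'_{ij}=0\}|$ and $d_- := |\{X_{ij}=0, X'_{ij}=1\}|$. Since $d_+ - d_- = |X| - |X'|$ and both $|X|,|X'|$ lie within $n\tau^{1/2}\log n$ of $\binom{n}{2}\tau$, we have $|d_+ - d_-| \le 2 n \tau^{1/2} \log n$. Conditional on $(X, X')$, the difference $\langle X - X', A\rangle$ is a difference of independent binomials $\mathrm{Bin}(d_+, p) - \mathrm{Bin}(d_-, q)$, with mean
\[
d_+ p - d_- q \;=\; \tfrac{d_+ + d_-}{2}(p-q) + \tfrac{d_+ - d_-}{2}(p+q) \;\ge\; \tfrac{d}{2}(p-q) - 2 n \tau^{1/2}\log n \cdot p ,
\]
and variance at most $d p$. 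Provided $d \ge C_1 n \tau^{1/2}\log n \cdot p/(p-q)$ for a large constant $C_1$, the mean exceeds $(d/4)(p-q)$, and Bernstein's inequality (always in the variance-dominated regime since $p-q \le p$) yields $\p(\langle X - X', A\rangle \le 0) \le \exp(-c\, d (p-q)^2 / p)$.

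Lemma~\ref{lem:deterministic upper bound on number of candidates} gives $|\tilde\Xfrak| \le n^{3n}$. Set
\[
t \;:=\; C_2 \max\!\Bigl( \tfrac{n p \log n}{(p-q)^2},\; \tfrac{n \tau^{1/2}\log n \cdot p}{p-q} \Bigr)
\]
for a sufficiently large constant $C_2$. Union-bounding over $X' \in \tilde\Xfrak$ with $|X \triangle X'| \ge t$ yields $\p(|\hat X \triangle X| \ge t) = o(1)$. The assumption $n\tau(p-q)^2/(p\log n) \to \infty$ makes the first term of $t$ equal to $o(n^2\tau)$ directly, and yields $(p-q) \gg \sqrt{p \log n/(n\tau)}$, so the second term is at most $n\tau^{1/2}\log n \cdot p \cdot \sqrt{n\tau/(p\log n)} = n^2\tau \sqrt{p\log n/n} = o(n^2\tau)$ since $p \le 1$. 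Combining with the complementary-event bound delivers $\E[|\hat X \triangle X|] = o(n^2\tau)$.

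The main obstacle is the asymmetry between $d_+$ and $d_-$: because $\tilde\Xfrak$ constrains $|X|$ only up to fluctuations of order $n\tau^{1/2}\log n$, the mean of $\langle X - X', A \rangle$ acquires a bias of that magnitude times $p$, which must be absorbed by the signal $(d/2)(p-q)$. Forcing this absorption into the threshold $t$, and then verifying $t = o(n^2 \tau)$ using only the variance-regime assumption, is the key quantitative check; the second assumption $n\tau(p-q)/\log n \to \infty$ plays the auxiliary role of guaranteeing that $n\tau$ is large enough for the concentration of $|X|$ and the complementary-event contribution to be harmless.
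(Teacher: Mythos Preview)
Your proof is correct and follows essentially the same route as the paper: write $\langle X-X',A\rangle$ as a difference of independent binomials, control the mean shift via the $|X|$--constraint in $\tilde\Xfrak$, apply Bernstein, and union-bound over $|\tilde\Xfrak|\le n^{3n}$. The one point of divergence is bookkeeping on the complementary event $\{X\notin\tilde\Xfrak\}$: you are more explicit than the paper in arguing that its contribution to the expected loss is $o(n^2\tau)$, but note that enlarging the $\log n$ factor in \eqref{eq:feasible_set} technically changes the estimator $\hat X$---for the theorem exactly as stated you would instead observe that, under your derived condition $n\tau\gg\log n$, the original tail bound already yields $\p(X\notin\tilde\Xfrak)\le Kn^{-1/K}$, and replacing $\log n$ by $C\log n$ in \eqref{eq:feasible_set} is a cosmetic change absorbed into the constants throughout.
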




\begin{proof}
As shown in the proof of Theorem~\ref{thm:upper bound on detection}, we have $X \in \tilde \Xfrak$ with probability $1-o(1)$.
Condition on such an instance of $X$ in the sequel.
Fix $X' \in \tilde \Xfrak$ such that $|X'-X| \ge \eps n^2 \tau$ for a fixed $\eps > 0$, where we use the notation $|X'-X| := \sum_{1 \le i < j \le n} |X'_{ij} - X_{ij}|$.
Let $X \setminus X' \in \{0,1\}^{\binom{n}{2}}$ be defined by $(X \setminus X')_{ij} = \bbone_{X_{ij} = 1, \, X'_{ij} = 0}$.
Then
\begin{equation*}
\langle X, A \rangle - \langle X', A \rangle
= \langle X \setminus X', A \rangle - \langle X' \setminus X, A \rangle ,
\end{equation*}
where $\langle X \setminus X', A \rangle$ and $\langle X' \setminus X, A \rangle$
are independent $\Bin(\abs{X \setminus X'}, p)$ and $\Bin(\abs{X' \setminus X}, q)$ random variables respectively.
Therefore,
\begin{align*}
\E[ \langle X \setminus X', A \rangle ] - \E[ \langle X' \setminus X, A \rangle ]
&= \abs{X \setminus X'} \, p - \abs{X' \setminus X} \, q \\
& = \frac{p-q}{2} |X'-X| + \frac{p+q}{2} \pr{\abs{X} - \abs{X'}}     \\
&\ge \frac{p-q}{2} \eps n^2 \tau - (p+q) n \tau^{1/2} \log n 
\end{align*}
as $X, X' \in \tilde \Xfrak$.
It then follows from Bernstein's inequality that with probability at least $1 - n^{-9n}$, 
$$
\langle X, A \rangle - \langle X', A \rangle
\ge \frac{p-q}{2} \eps n^2 \tau 
- C_1 \Big( \sqrt{\eps n^3 \tau p \log n} + n \log n \Big) 
$$
for a constant $C_1 > 0$.
By Lemma~\ref{lem:deterministic upper bound on number of candidates} and a union bound, the above inequality holds for all such $X'$ with probability at least $1-n^{-6n}$.
We now claim that the right-hand side of the above bound is positive.
It suffices to have
$\frac{\eps n (p-q)^2 \tau}{p \log n} \ge C_2$ and $\frac{\eps n (p-q) \tau}{\log n} \ge C_2$ for a constant $C_2 > 0$. 
Since $\frac{(p-q)^2 n \tau}{p \log n} \to \infty$ and $\frac{(p-q) n \tau}{\log n} \to \infty$, the claim is proved.

We have shown that, with probability at least $1-n^{-6n}$, $\langle X, A \rangle - \langle X', A \rangle > 0$ for all $X' \in \tilde \Xfrak$ such that $|X'-X| \ge \eps n^2 \tau$. By the definition of $\hat X$ in \eqref{eq:estimator_def}, it follows that $|\hat X - X| \le \eps n^2 \tau$.
Since $|X|, |\hat X| \le n^2$, we conclude that $\E[|\hat X - X|] \le 2 \eps n^2 \tau$.
Note that $R(\hat X, X)$ in Definition~\ref{def:prob-recover} is precisely $\E[|\hat X - X|]$ since $X_{ij}, \hat X_{ij} \in \{0,1\}$. As $\eps>0$ is arbitrary, the proof is complete. 
\end{proof}

\appendix

\section{Existing results on U-statistics}
\label{sec:u-stats}



We state a few existing results on $U$-statistics, starting with decoupling inequalities \cite{de1995decoupling,delapenaDecoupling1999,houdreExponentialInequalitiesConstants2003}. 
We only use $U$-statistics of order 2, in which case the general results specialize to the following lemma.

\begin{lemma}
[Theorems~3.1.1 and~3.4.1 in \cite{delapenaDecoupling1999}]
\label{lem:decoupling}
Let $z_1, \dots, z_n$ be $n$ independent random variables in $[0,1]$ and write $z = (z_1, \dots, z_n)$. Let $z''$ be an independent copy of $ z$. 
For any $(i,j) \in \binom{[n]}{2}$, let $f_{ij} : [0,1]^2 \to \mathbb R$ be a function such that $\E|f_{ij}(z_i,z_j)| < \infty$.
\begin{itemize}
\item
For any convex nondecreasing function $\Psi:[0,\infty)\to [0,\infty)$ such that $\E \Psi(|f_{ij}(z_i,z_j)|) < \infty$ for all $(i,j) \in \binom{[n]}{2}$, we have
\begin{equation*}
\E \Psi \pr{\abs{\sum_{1\leq i<j\leq n} f_{ij}(z_i,z_j)}} \leq \E \Psi \pr{12 \abs{\sum_{1\leq i<j \leq n} f_{ij}(z_i,z_j'')}} .
\end{equation*}

\item
There exists an absolute constant $C>0$ such that for all $t > 0$,
\begin{equation*}
\Pb\pr{\abs{\sum_{1\leq i<j\leq n} f_{ij}(z_i,z_j)} > t} \leq C \, \Pb\pr{C\abs{\sum_{1\leq i<j\leq n} f_{ij}(z_i,z_j'')} > t}.
\end{equation*}
\end{itemize}
\end{lemma}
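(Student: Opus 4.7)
These are specializations of the classical decoupling inequalities of de la Peña--Montgomery-Smith (Theorems~3.1.1 and~3.4.1 of \cite{delapenaDecoupling1999}) to $U$-statistic kernels of order $k=2$, so the primary plan is simply to invoke those theorems. For completeness, I sketch below the underlying ideas one would use to give a self-contained proof.

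For the convex moment inequality, the standard route is a random-partition argument. Introduce $\epsilon_1,\dots,\epsilon_n$ i.i.d.\ Bernoulli$(1/2)$ independent of $(z,z'')$, and set $A:=\{i:\epsilon_i=1\}$, $B:=\{i:\epsilon_i=0\}$. Because $(z_i)_{i\in A}$ and $(z_j)_{j\in B}$ are independent conditional on $\epsilon$, the partial sum
\[
P_\epsilon := \sum_{\substack{i\in A,\, j\in B\\ i<j}} f_{ij}(z_i,z_j)
\]
is, conditionally on $\epsilon$, genuinely decoupled, and its joint law matches that of the corresponding subsum of the fully decoupled $U$-statistic $\sum_{i<j} f_{ij}(z_i,z''_j)$. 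For each pair $(i,j)$ with $i<j$, the event $\{i\in A,\,j\in B\}$ has probability $1/4$, so $\mathbb{E}_\epsilon[P_\epsilon\mid z] = \tfrac14 \sum_{i<j} f_{ij}(z_i,z_j)$. Applying Jensen's inequality with the convex, nondecreasing $\Psi$ yields
\[
\Psi\!\left(\tfrac{1}{4}\Bigl|\sum_{i<j} f_{ij}(z_i,z_j)\Bigr|\right) \le \mathbb{E}_\epsilon\,\Psi(|P_\epsilon|),
\]
and then taking expectation in $z$ and invoking the distributional identification of $P_\epsilon$ with a subsum of the decoupled statistic gives the moment inequality. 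Handling both orderings (to recover the full decoupled $U$-statistic rather than a subsum) and symmetrizing produces the numerical constant $12$.

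For the tail-probability version, Jensen is unavailable because indicator functions are not convex. The standard workaround is a Hoffmann--Jorgensen / good-$\lambda$ argument: truncate the summands at a scale tied to $t$, prove the decoupling comparison for the truncated pieces using the moment method above, and then sum over dyadic scales to recover the raw tail. This loses an additional absolute factor compared to the moment inequality, which is why the lemma states a separate universal constant $C$ rather than reusing the constant $12$.

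The main technical obstacle in a fully self-contained proof would be the Hoffmann--Jorgensen iteration for the tail version, which is combinatorially more delicate than the moment argument. Since the paper uses these inequalities only as a black box (via Lemmas~\ref{lem:bound_goal1} and~\ref{lem:u-higher-moment}), our plan is simply to cite \cite{delapenaDecoupling1999} and treat the lemma as established.
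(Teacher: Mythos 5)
The paper gives no proof of this lemma---it is stated purely as a citation of Theorems~3.1.1 and~3.4.1 of \cite{delapenaDecoupling1999}---so your primary plan of invoking the reference as a black box is exactly the paper's approach, and your sketch of the moment inequality (random partition plus conditional Jensen) is the standard argument behind Theorem~3.1.1. One minor caveat: the tail version in the reference is proved by the de la Pe\~na--Montgomery-Smith restricted-subset/conditional-probability argument rather than a Hoffmann--J{\o}rgensen truncation-and-dyadic-summation scheme, but since you only cite the result this does not affect the correctness of your proposal.
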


We also need moment inequalities and tail bounds for decoupled $U$-statistics \cite{gineExponentialMomentInequalities2000}.

\begin{lemma}
[Equations (3.3) and (3.5) in \cite{gineExponentialMomentInequalities2000}] \label{lem:hti moments}
Let $z_1, \dots, z_n$ be $n$ independent random variables in $[0,1]$ and write $z = (z_1, \dots, z_n)$. Let $z''$ be an independent copy of $ z$.
For any $(i,j) \in \binom{[n]}{2}$, let $f_{ij} : [0,1]^2 \to \mathbb R$ be a function such that $\E[f_{ij}(z_i,z''_j)]=0$.
For brevity, write $f_{ij} = f_{ij}(z_i,z_j'')$. Then there exists an absolute constant $K>0$ such that the following holds:
\begin{itemize}
\item 
For any $d \ge 2$, 
\begin{align*}
\E \abs{\sum_{1\leq i<j\leq n} f_{ij}}^d \leq  K^d \max\Bigg\{ & d^d\pr{\sum_{1\leq i<j\leq n} \E  f_{ij}^2 }^{d/2}, d^{3d/2}\E_{\zb} \max_{1\leq i< n}\pr{\sum_{j = i+1}^{n} \E_{\zb''}  f_{ij}^2}^{d/2}, \\
& d^{3d/2}\E_{\zb''} \max_{1< j\leq n}\pr{\sum_{i=1}^{j-1} \E_{\zb}  f_{ij}^2}^{d/2}, d^{2d}\E \max_{1\leq i<j\leq n}\abs{ f_{ij}}^d \Bigg\} .
\end{align*}


\item
For any $t > 0$,
\begin{equation*}
\Pb\br{\abs{\sum_{1\leq i<j\leq n}  f_{ij}} > t } \leq \constTailbound \exp\br{-\frac{1}{\constTailbound}\min\kr{\frac{t}{C},\pr{\frac{t}{B}}^{2/3},\pr{\frac{t}{A}}^{1/2}}} ,
\end{equation*}
where
\begin{align*}
A &= \max_{1\leq i<j\leq n} \sup_{x,y \in [0,1]} |f_{ij}(x,y)|, \quad C^2 = \sum_{1\leq i<j\leq n} \E  f_{ij}^2 , \\
B^2 &= \max \Bigg\{ \max_{1\leq i < n}\sup_{x \in [0,1]} \pr{\sum_{j=i+1}^n \E_{z_j''} f_{ij}(x,z_j'')^2} ,  \max_{1< j \leq n}\sup_{y \in [0,1]}\pr{\sum_{i=1}^{j-1} \E_{z_i}  f_{ij}(z_i,y)^2} \Bigg\}.
\end{align*}
\end{itemize}
\end{lemma}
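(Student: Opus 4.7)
The plan is to derive the moment and tail inequalities directly by iterating a Rosenthal-type bound over the two independent coordinates of the decoupled $U$-statistic. The lemma as stated corresponds to equations (3.3) and (3.5) of \cite{gineExponentialMomentInequalities2000}, so the cleanest route is to reproduce (or simply cite) the argument there; the key structural ingredients are given below.

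For the moment bound, I would first fix $z''$ and group the summands by the first index: $T_i(z'') := \sum_{j > i} f_{ij}(z_i, z''_j)$. As functions of $z = (z_1, \dots, z_n)$, the variables $T_1(z''), \dots, T_{n-1}(z'')$ are conditionally independent and mean zero because each $T_i(z'')$ depends on the single coordinate $z_i$ and $\E_{z_i} f_{ij}(z_i, z''_j) = \E f_{ij}(z_i, z''_j) = 0$. Apply the usual Rosenthal inequality for sums of independent mean-zero variables to obtain
\[
\E_z \Bigl|\sum_i T_i(z'')\Bigr|^d \le K_1^d \max\Bigl\{ d^{d/2} \Bigl(\sum_i \E_z T_i(z'')^2\Bigr)^{d/2},\ d^d\, \E_z \max_i |T_i(z'')|^d \Bigr\}.
\]
The first inner quantity equals $\sum_{1\le i<j \le n} \E_z f_{ij}^2$ since the cross-terms inside each $T_i(z'')^2$ vanish by the mean-zero hypothesis on the $j$-side; after taking $\E_{z''}$ this yields the first term, with factor $d^d$, in the maximum. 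To handle $\E_z \max_i |T_i(z'')|^d$, I would bound the max by a sum and apply Rosenthal a second time to each $T_i(z'')$, now treated as a sum of independent mean-zero variables in $(z''_j)_{j>i}$ with $z_i$ fixed. This produces a variance term $\sum_{j>i} \E_{z''_j} f_{ij}(z_i, z''_j)^2$ and a maximum-summand term $\max_{j>i} |f_{ij}(z_i, z''_j)|^d$, with $d^{d/2}$ and $d^d$ factors respectively. Composing the two Rosenthal steps gives the $d^{3d/2}$ row-sum term and the $d^{2d}$ max-summand term; by symmetry (grouping first by $j$ instead of $i$), one also obtains the $d^{3d/2}$ column-sum term, completing the four-term bound.

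For the tail bound, Markov's inequality gives $\p\{|S|>t\} \le t^{-d} \E|S|^d$ for every $d \ge 2$. Plugging in the moment bound and noting that the two $B^2$ candidates have the same order of $d$-factor, the right-hand side is at most a maximum of three expressions of the form $(K' d^\alpha Q / t)^d$ with $(\alpha, Q) \in \{(1/2, C), (3/2, B), (2, A)\}$. In each regime the minimizer over $d$ is of order $d^* \asymp (t/(eK'Q))^{1/\alpha}$, and the resulting exponent is of order $-(t/Q)^{1/\alpha}$. Taking the best of the three regimes produces the min of $t/C$, $(t/B)^{2/3}$, and $(t/A)^{1/2}$ in the exponent, which is exactly the stated bound.

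The main obstacle is the iterated Rosenthal step: one must cleanly interchange $\max_i$ with the $d$-th moment before the second application, and track the precise powers of $d$ so as not to weaken $d^{3d/2}$ or $d^{2d}$. In \cite{gineExponentialMomentInequalities2000} this is carried out using Orlicz-norm machinery together with the Hoffmann--Jørgensen inequality, which automatically handles the max-moment interchange with sharp constants; an alternative self-contained proof can be based on the entropy method combined with a Poincaré-type inequality. Since the statement of the lemma matches the published inequalities verbatim, the most efficient execution is to invoke (3.3) and (3.5) of \cite{gineExponentialMomentInequalities2000} directly.
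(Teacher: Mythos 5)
The paper offers no proof of this lemma---it is quoted verbatim from Gin\'e--Lata{\l}a--Zinn as an existing result---so your bottom-line recommendation to invoke their equations (3.3) and (3.5) directly is exactly what the paper does, and your iterated-Rosenthal sketch is a reasonable outline of how such bounds are derived (modulo the genuine difficulty you flag, that the conditional variance $\sum_i \E_z T_i(z'')^2$ is still random in $z''$ and its $d/2$-th moment must itself be controlled). One caution: your sketch silently uses $\E_{z_i} f_{ij}(z_i,z''_j)=0$ and $\E_{z''_j} f_{ij}(z_i,z''_j)=0$, i.e.\ that the kernel is canonical (completely degenerate), which is what the cited theorems require but is strictly stronger than the stated hypothesis $\E f_{ij}(z_i,z''_j)=0$; this is harmless here because the kernels the paper actually feeds into the lemma, such as $\bbone_{\dist(z_i,z''_j)\le\tau/2}-\tau$, are canonical.
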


\section*{Acknowledgments}
We thank Vladimir Koltchinskii for a discussion on U-statistics, thank Sam van der Poel for a discussion on one-dimensional geometric graphs, and thank Jiaming Xu for a discussion on the I-MMSE method.

\bibliographystyle{alpha}
\bibliography{MyLibrary,IT}

\end{document}